\documentclass[final]{amsart}

\usepackage[foot]{amsaddr}
\usepackage{booktabs}
\usepackage{fullpage}
\usepackage{tristan}
\usepackage{algorithm}
\usepackage{algpseudocode}

\usepackage{pgfplots}
\usepackage{pgfplotstable}
\usepackage{subcaption}
\usepackage{tikz}
\usepackage{todonotes}
\usepackage[giveninits=true,eprint=false,url=false,style=alphabetic]{biblatex}
\AtBeginDocument{%
  \setlength{\abovedisplayskip}{4pt plus 2pt minus 1pt}%
  \setlength{\belowdisplayskip}{4pt plus 2pt minus 1pt}%
  \setlength{\abovedisplayshortskip}{3pt plus 1pt minus 1pt}%
  \setlength{\belowdisplayshortskip}{3pt plus 1pt minus 1pt}%
  \setlength{\jot}{1.5pt}%
}
\AtBeginBibliography{\small\setlength{\bibitemsep}{0pt}}
\makeatletter
\AtBeginDocument{%
  \thm@preskip=.35\baselineskip\@plus.15\baselineskip\@minus.15\baselineskip
  \thm@postskip=.35\baselineskip\@plus.15\baselineskip\@minus.15\baselineskip
}
\makeatother

\newcommand{\Nh}{\mathcal N_h}
\newcommand{\Nhi}{\mathcal N_h^\circ}
\newcommand{\Vh}{V_h}
\newcommand{\Vht}{\widetilde V_h}
\newcommand{\muh}{\mu_h}

\newcommand{\dx}{\,\mathrm dx}

\newcommand{\uh}{u_h}

\title{A Finite Element Method for the Grad--Mercier Equation}

\author{Ben Ashby and Tristan Pryer} \date{}

\begin{document}

\maketitle

\begin{abstract}
We develop and analyse a conforming piecewise affine finite element
method for the Grad--Mercier equation, a nonlocal elliptic model for
magnetic equilibria whose source depends on the global rank of a flux
function. The principal difficulty is that the unique continuum
equilibrium has a maximum plateau of prescribed positive area. The
solution therefore samples the rank law only above the plateau mass,
while perturbations that separate equal values may distribute this mass
over several numerical ranks and thereby alter the non-strict rank
source.

We exploit the continuum well-posedness structure to replace the rank
law by an equivalent clipped, nonnegative law, and replace Lebesgue
measure by a mass-lumped nodal measure. Equal nodal values are assigned
a common rank interval, thereby retaining the full lumped mass of each
tied level.

The resulting weighted rearrangement gives a strictly convex discrete
energy and hence a unique finite element equilibrium. On meshes
satisfying a discrete comparison principle, the discrete equilibrium is
nonnegative and bounded above by a computable barrier. A
Poisson-preconditioned rearrangement iteration converges on each fixed
mesh and admits computable a posteriori energy and energy-norm bounds.
As polygonal finite element domains converge to the physical
cross-section, the discrete equilibria converge strongly in
$\hoz(\Omega)$ and uniformly on $\overline\Omega$ to the unique
Grad--Mercier equilibrium. Reconstructed equilibrium sources converge
weak-* in $\leb{\infty}(\Omega)$ to the original nonlocal source.
\end{abstract}

\section{Introduction}
\label{sec:introduction}

Tokamaks are toroidal devices designed to confine a hot fusion plasma by
means of magnetic fields. In a poloidal cross-section, a magnetic
equilibrium is described by a flux function whose level sets form nested
magnetic surfaces surrounding the magnetic axis. Plasma quantities are
naturally associated with these surfaces rather than with individual
spatial points. Consequently, constraints expressed in terms of the
distribution of magnetic flux lead to nonlocal equations involving
superlevel-set measures and monotone rearrangements; see
\cite{Temam1975,Temam1977,Temam1979,CaffarelliFarahRestrepo2025}.

The Grad--Mercier equation is one such model. It describes the poloidal
flux in a tokamak cross-section when the source is determined by the
position of each flux surface within the globally ordered family of flux
surfaces. We consider
\begin{equation}
  -\Delta u(x)
  =
  g\qp{
    \abs{u\geq u(x)}
  }
  \qquad
  \text{in }\Omega,
  \qquad
  u=0
  \qquad
  \text{on }\partial\Omega,
  \label{eq:intro-gm}
\end{equation}
where $\abs{u\geq t}$ denotes the measure of the superlevel set
$\qc{y\in\Omega:u(y)\geq t}$. Although the principal part is local, the
source at $x$ depends on the global ordering of the solution values through
the measure of the complete superlevel set determined by $u(x)$.

The analytical structure of \eqref{eq:intro-gm} was recently clarified by
Caffarelli, Farah and Restrepo
\cite{CaffarelliFarahRestrepo2025}. For a Lipschitz continuous,
nondecreasing source law $g$, the problem admits a unique solution with
$\cont{1,1}$ regularity. If $g$ has a unique zero
$\alpha\in(0,\abs{\Omega})$, then the solution develops a \emph{dead core}
\begin{equation}
  D_u
  :=
  \qc{
    x\in\Omega:
    u(x)=\max_{\overline\Omega}u
  },
  \qquad
  \abs{D_u}
  =
  \alpha.
  \label{eq:intro-dead-core}
\end{equation}
The source vanishes on $D_u$ and is positive away from it, so the problem
may be viewed as a nonlocal, volume-selected free-boundary problem.

The positive-measure maximum plateau is also the principal obstacle to
numerical approximation. If $M=\max_{\Omega}u,$ then
\begin{equation}
  \abs{u>M}=0,
  \qquad
  \abs{u\geq M}=\alpha.
  \label{eq:maximum-rank-gap}
\end{equation}
Thus the plateau carries the full mass $\alpha$, even though no point
lies strictly above the maximum value. A perturbation that separates
equal values may distribute this mass over several numerical ranks and
thereby alter the source assigned near the maximum. 

The Grad--Mercier problem belongs to the classical theory of
rearrangement-dependent plasma equations. Early existence and qualitative
results were developed by Temam, Mossino, Gourgeon and their collaborators
\cite{Temam1975,Temam1977,Temam1979,Mossino1979,
GourgeonMossino1979,MossinoTemam1981}. Relative rearrangements were later
introduced for more general nonlocal plasma models
\cite{DiazRakotoson1996,Rakotoson2008}. Finite element and Galerkin
approximations of related rearrangement problems were studied in
\cite{BermudezSeoane1998,Rakotoson1999,RakotosonSeoane2000}, while local
plasma free-boundary, nonlinear eigenvalue and volume-matching problems
were considered in
\cite{BarrettElliott1989,Caloz1991,BarrettElliott1991,
BarrettChakrabarti1991}.

The existing rearrangement approximations do not resolve the difficulty
created by the prescribed Grad--Mercier plateau. In particular, key
consistency results in \cite{RakotosonSeoane2000} are formulated under
the absence of plateaux, whereas the maximum plateau in
(\ref{eq:intro-gm}) is intrinsic to the continuum solution
and has positive area. The present method instead assigns every tied
nodal level a common rank interval, thereby retaining the full lumped
mass of that level and treating the non-strict rank source directly.

The main contributions are fourfold. First, we introduce a
mass-weighted nodal rearrangement in which tied nodal values occupy a
common rank interval. This gives a strictly convex discrete energy and
hence a unique finite element solution, with positivity and an upper
barrier on meshes satisfying a discrete comparison principle. Second,
we characterise the discrete supergradients by rank-allocation
sources, prove convergence of a Poisson-preconditioned rearrangement
iteration on each fixed mesh, and derive computable a posteriori
energy and energy-norm bounds. Third, we prove strong convergence of
the discrete minimisers in $\hoz(\Omega)$ and $\leb{\infty}(\Omega)$
as the polygonal domains and meshes approximate the physical
domain. Finally, we identify the weak-star limit of the reconstructed
equilibrium sources. The limiting function is therefore the unique
continuum Grad--Mercier equilibrium, whose maximum set has measure
$\alpha$ and whose remaining level sets have zero measure.

The proof combines consistency of the weighted nodal rearrangement with
compactness for mass-lumped Poisson problems on varying polygonal
domains and the variational characterisation of the continuum
solution. Unlike earlier consistency arguments requiring atom-free
distributions, it accommodates the positive-measure maximum level.

Section~\ref{sec:problem} recalls the continuum problem,
Section~\ref{sec:numerical-method} defines the discrete method and
states the main results, Section~\ref{sec:numerical-experiments}
presents numerical experiments, and Section~\ref{sec:proofs} contains
the analysis.

\section{Problem setup}
\label{sec:problem}

Let $\Omega\subset\reals^2$ be a bounded, connected, convex domain
with $\cont{1,1}$ boundary. Throughout, $\abs{\cdot}$ denotes
two-dimensional Lebesgue measure. Following
\cite{CaffarelliFarahRestrepo2025}, for a measurable function
$v:\Omega\to\reals$ and $t\in\reals$, we write
\begin{equation}
  v\geq t
  :=
  \qc{
    x\in\Omega:
    v(x)\geq t
  },
  \qquad
  v>t
  :=
  \qc{
    x\in\Omega:
    v(x)>t
  }.
  \label{eq:superlevel-notation}
\end{equation}
The measures of these sets are denoted by $\abs{v\geq t}$ and
$\abs{v>t}$.

For later use, we also recall the decreasing rearrangement associated
with this superlevel-set convention. If $v\in L^1(\Omega)$, its
decreasing rearrangement
$v^\ast:(0,\abs{\Omega})\longrightarrow\mathbb R$ is the generalised
inverse of its distribution function:
\begin{equation}
  v^\ast(s)
  :=
  \inf\qc{
    t\in\mathbb R:
    \abs{v>t}\leq s
  },
  \qquad
  0<s<\abs{\Omega}.
  \label{eq:continuum-decreasing-rearrangement}
\end{equation}
Thus $v^\ast$ is nonincreasing and is equimeasurable with $v$.

We consider the Grad--Mercier problem
\begin{equation}
  -\Delta u(x)
  =
  g\qp{
    \abs{u\geq u(x)}
  }
  \qquad\text{in }\Omega,
  \qquad
  u=0
  \qquad\text{on }\partial\Omega.
  \label{eq:grad-mercier}
\end{equation}

\begin{assumption}[Assumption on source law]
\label{ass:rank-law}
The function $ g:[0,\abs{\Omega}]\longrightarrow\reals$ is Lipschitz
continuous and nondecreasing. There exists a unique
$\alpha\in(0,\abs{\Omega})$ such that $g(\alpha)=0.$ Consequently,
\begin{equation}
  g(r)<0
  \quad\text{for }r<\alpha,
  \qquad
  g(r)>0
  \quad\text{for }r>\alpha.
  \label{eq:g-sign}
\end{equation}
\end{assumption}

\begin{definition}[Weak solution]
\label{def:weak-solution}
A function $u\in \hoz(\Omega)\cap \cont0(\overline\Omega)$ is a weak
solution of \eqref{eq:grad-mercier} if
\begin{equation}
  \int_\Omega
  \nabla u\cdot\nabla\phi\dx
  =
  \int_\Omega
  g\qp{
    \abs{u\geq u(x)}
  }
  \phi(x)\dx
  \Foreach \phi\in \hoz(\Omega).
  \label{eq:weak-problem}
\end{equation}
\end{definition}

\begin{theorem}[Continuum well-posedness and dead core]
\label{thm:continuum-well-posedness}
Let Assumption~\ref{ass:rank-law} hold. Then \eqref{eq:grad-mercier}
admits a unique weak solution $u$. Moreover, $u\in
\cont{1,1}(\overline\Omega), \Delta u\in \cont0(\overline\Omega).$ The
maximum set
\begin{equation}
  D_u
  :=
  \qc{
    x\in\Omega:
    u(x)=\max_{\overline\Omega}u
  }
  \label{eq:continuum-dead-core}
\end{equation}
has the prescribed measure
\begin{equation}
  \abs{D_u}=\alpha.
  \label{eq:continuum-dead-core-measure}
\end{equation}
In addition,
\begin{equation}
  \abs{u\geq u(x)}
  =
  \alpha
  \qquad
  \text{for }x\in D_u,
  \label{eq:superlevel-measure-on-dead-core}
\end{equation}
whereas
\begin{equation}
  \abs{u\geq u(x)}
  >
  \alpha
  \qquad
  \text{for }x\in\Omega\setminus D_u.
  \label{eq:superlevel-measure-off-dead-core}
\end{equation}
\end{theorem}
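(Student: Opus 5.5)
We follow the variational route of \cite{CaffarelliFarahRestrepo2025}, and in the first instance we mostly import its conclusions. The plan has three parts: existence and uniqueness from a convex functional, regularity by bootstrapping, and the dead-core identities from the sign structure of $g$.

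\textbf{Existence and uniqueness.} Set $G(s):=\int_0^s g(r)\,dr$. Since $g$ is nondecreasing, $G$ is convex. For $v\in\hoz(\Omega)$ define
\begin{equation*}
  E(v):=\tfrac12\int_\Omega\abs{\nabla v}^2\dx-\int_0^{\abs{\Omega}} v^\ast(s)\,g(s)\,ds .
\end{equation*}
The second term is concave in $v$. Indeed, by the Hardy--Littlewood representation it equals $-\sup\qc{\int_\Omega v\,h\dx}$, the supremum taken over functions $h$ equimeasurable with $g(\abs{\Omega}\,\cdot)$-type profiles. Equivalently, $v\mapsto\int v^\ast g$ is a supremum of linear functionals, because $g$ is nondecreasing and hence pairs with the decreasing rearrangement. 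It follows that $E$ is strictly convex; strictness comes from the Dirichlet term. Since $g$ is bounded, $E$ is coercive on $\hoz(\Omega)$, so the direct method gives a unique minimiser $u$.

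The Euler--Lagrange inclusion reads $-\Delta u\in\partial\qp{\int u^\ast g}$. We will show that this subdifferential is realised by the source $g(\abs{u\geq u(x)})$. On level sets of zero measure this is exactly the one-sided derivative. On a plateau $\qc{u=t}$ of positive measure the admissible sources are the averaged rank-allocations. To show that every weak solution is the minimiser, we test \eqref{eq:weak-problem} against $\phi=v-u$ and use the rearrangement inequality
\begin{equation*}
  \int_\Omega g(\abs{u\geq u})(v-u)\dx \;\ge\; \int v^\ast g-\int u^\ast g .
\end{equation*}
This inequality holds when the source is a function of $u$, nondecreasing in rank, and constant on level sets. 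It yields $E(v)\ge E(u)$, so any weak solution minimises $E$, and uniqueness follows from strict convexity.

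\textbf{Regularity.} Since $g$ is bounded, the right-hand side of \eqref{eq:weak-problem} lies in $\leb\infty$. Elliptic $W^{2,p}$ theory on the $\cont{1,1}$ domain then gives $u\in W^{2,p}$ for all $p<\infty$, and hence $u\in\cont{1,\beta}$. The $\cont{1,1}$ regularity and the continuity of $\Delta u$ require showing that $x\mapsto g(\abs{u\geq u(x)})$ is continuous. This needs the distribution function $t\mapsto\abs{u\geq t}$ to be continuous except possibly at $t=\max u$, with $g$ vanishing there. Away from the maximum this follows from the standard fact that on a set of positive measure where $u=t$ one has $\Delta u=0$ a.e.; but there $-\Delta u=g(\abs{u\geq t})$ is forced. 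Hence a plateau at level $t$ requires $g(\abs{u\ge t})=0$, which means $\abs{u\ge t}=\alpha$ by uniqueness of the zero.

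\textbf{Dead core.} By the plateau argument, the only possible plateau is a level $t$ with $\abs{u\ge t}=\alpha$. A strong-maximum-type argument excludes a plateau at an interior level below the maximum. The key step is to show that the maximum set has measure exactly $\alpha$, and we argue both inequalities by contradiction. If $\abs{D_u}<\alpha$, then near the top $\abs{u\geq t}<\alpha$, so the source is negative and $u$ is strictly subharmonic near its maximum, which contradicts the existence of an interior maximum. If $\abs{D_u}>\alpha$, then $g>0$ on $D_u$ while $\Delta u=0$ a.e.\ on $D_u$, again a contradiction. Once $\abs{D_u}=\alpha$ is established, \eqref{eq:superlevel-measure-on-dead-core} is immediate. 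For $x\notin D_u$ the superlevel set $\qc{u\ge u(x)}$ contains $D_u$ together with a neighbourhood of part of $\partial D_u$ (by continuity), which gives \eqref{eq:superlevel-measure-off-dead-core}.

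The main obstacle is the first inequality, excluding $\abs{D_u}<\alpha$ (including $\abs{D_u}=0$). It requires the Euler--Lagrange identification on plateaux to be rigorous, and for this step I would lean directly on \cite{CaffarelliFarahRestrepo2025}.
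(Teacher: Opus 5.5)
Your final step, deriving \eqref{eq:superlevel-measure-on-dead-core} and \eqref{eq:superlevel-measure-off-dead-core} from $\abs{D_u}=\alpha$ and continuity of $u$, is the paper's argument. The paper obtains everything else (existence, uniqueness, $\cont{1,1}$ regularity, $\abs{D_u}=\alpha$) by citing \cite{CaffarelliFarahRestrepo2025}.

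\textbf{The unclipped functional has the wrong minimiser.} The variational route you sketch for existence and uniqueness fails, because with the unclipped law $g$ the Grad--Mercier solution does not minimise your $E$. As you note yourself, on a positive-measure plateau the admissible sources are averaged rank allocations. On $D_u$ that average is $\frac1\alpha\int_0^\alpha g\,\mathrm ds<0$, whereas the Grad--Mercier source there is $g(\alpha)=0$. Hence your inequality
\[
  \int_\Omega g\qp{\abs{u\geq u(\cdot)}}(v-u)\dx
  \geq
  \int_0^{\abs{\Omega}}(v^\ast-u^\ast)\,g\,\mathrm ds
\]
is false. Take $M=\max u$, $v=u-\delta\mathbf 1_{D_u}$ and $a_\delta:=\abs{\qc{M-\delta<u<M}}\to0$; an $\hoz(\Omega)$ approximation of $v$ also works, since both sides are $L^1$-continuous in $v$. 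The left side vanishes. Computing $v^\ast$ explicitly, the right side is at least $-\delta\int_{a_\delta}^{a_\delta+\alpha}g\,\mathrm ds$, which is positive for small $\delta$.

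More generally, take a minimiser of $E$ with top plateau $D$. Its Laplacian vanishes a.e.\ on $D$, while its supergradient must integrate to $\int_0^{\abs{D}}g$ over $D$. It therefore needs $\int_0^{\abs{D}}g\,\mathrm ds=0$, not $g(\abs{D})=0$. In the radial example of Section~\ref{sec:numerics-radial} this gives a plateau of area $2\alpha$ instead of $\alpha$. So the direct method produces the wrong function, and strict convexity of $E$ gives no uniqueness for weak solutions.

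There is also a sign error. Since $g$ is nondecreasing, $v\mapsto\int v^\ast g$ is an \emph{infimum} of linear functionals. With your ``supremum'', $E$ would be convex minus convex.

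\textbf{How to repair it.} The missing idea is the clipping \eqref{eq:clipped-rank-law}, $G(\rho)=g(\max\qc{\alpha,\rho})$. With it, $G(\abs{u\geq u(\cdot)})\sim G$ and $u$ is the unique minimiser of the clipped energy (Lemma~\ref{lem:continuum-variational-characterisation}). The paper proves that lemma using the present theorem, so it does not replace the citation, but it does repair your route:
\begin{itemize}
  \item For a weak solution the source is explicit. Your subharmonicity and Stampacchia arguments then give $\abs{D_u}=\alpha$ and no lower plateaux with no Euler--Lagrange identification, once you check $\max u>0$ (by the analogous minimum-principle argument at a negative interior minimum). The step you flag as the main obstacle is therefore not the hard one.
  \item This yields uniqueness via the clipped energy.
  \item Existence still needs a proof that the clipped minimiser has a top plateau of measure exactly $\alpha$ and no other plateaux.
\end{itemize}

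Finally, continuity of $x\mapsto g(\abs{u\geq u(x)})$ gives $\Delta u\in\cont0(\overline\Omega)$ but not $u\in\cont{1,1}(\overline\Omega)$, since a continuous Laplacian does not bound the Hessian. That part must still come from \cite{CaffarelliFarahRestrepo2025}.
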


\begin{proof}
Existence, uniqueness, regularity, and
\eqref{eq:continuum-dead-core-measure} follow from
\cite{CaffarelliFarahRestrepo2025}. For $x\in D_u$, the superlevel set
at $u(x)$ is $D_u$, giving
\eqref{eq:superlevel-measure-on-dead-core}. If $x\notin D_u$, then
$u(x)<\max_{\overline\Omega}u$; by continuity,
$\{u>u(x)\}$ contains a neighbourhood of $D_u$ and hence has measure
strictly greater than $\alpha$. This proves
\eqref{eq:superlevel-measure-off-dead-core}.
\end{proof}

Theorem~\ref{thm:continuum-well-posedness} shows that the continuum
solution samples $g$ only on $[\alpha,\abs{\Omega}]$. We therefore define
the clipped rank law
\begin{equation}
  G(\rho)
  :=
  g\qp{\max\qc{\alpha,\rho}},
  \qquad
  M_\alpha
  :=
  g\qp{\abs{\Omega}}.
  \label{eq:clipped-rank-law}
\end{equation}
Then $G$ is Lipschitz continuous and nondecreasing, and $0\leq
G(\rho)\leq M_\alpha$, $G(\rho)=0$ for $0\leq\rho\leq\alpha$.
Moreover, $G\qp{\abs{u\geq u(x)}} = g\qp{\abs{u\geq u(x)}}.$ Thus, as
illustrated in Figure \ref{fig:continuum-plateau-mechanism}, $G$
extends the continuum source to a nonnegative law on the full rank
interval.

\begin{figure}[tbp]
  \centering
  \begin{subfigure}[t]{0.30\textwidth}
    \centering
    \includegraphics[width=\linewidth]
      {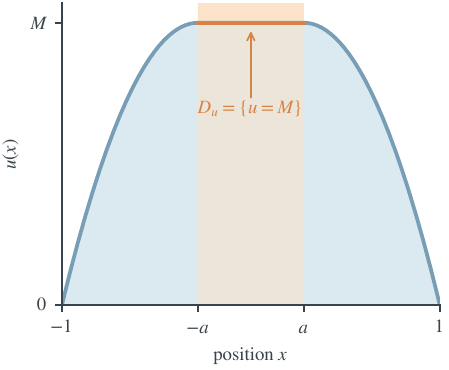}
    \caption{A positive-measure maximum plateau.}
    \label{fig:continuum-plateau-profile}
  \end{subfigure}
  \hfill
  \begin{subfigure}[t]{0.30\textwidth}
    \centering
    \includegraphics[width=\linewidth]
      {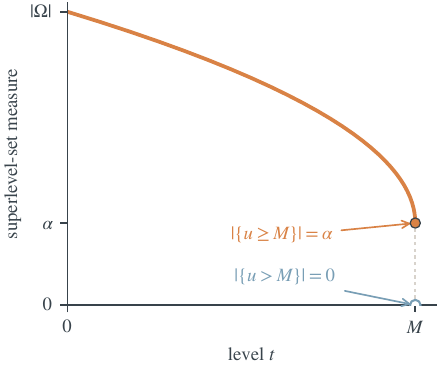}
    \caption{The rank jump at the maximum.}
    \label{fig:continuum-rank-maximum}
  \end{subfigure}
  \hfill
  \begin{subfigure}[t]{0.30\textwidth}
    \centering
    \includegraphics[width=\linewidth]
      {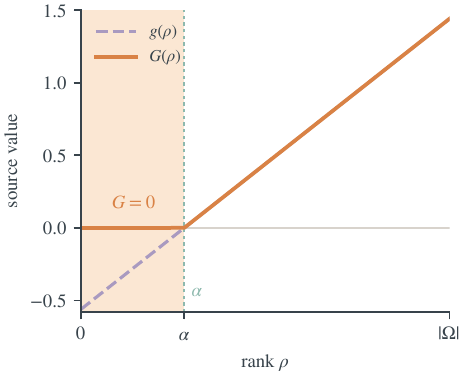}
    \caption{The original and clipped laws.}
    \label{fig:continuum-clipped-laws}
  \end{subfigure}
  \caption{ The continuum plateau mechanism. In the one-dimensional
    schematic, the plateau has measure $2a=\alpha$.  }
  \label{fig:continuum-plateau-mechanism}
\end{figure}

\section{The numerical method}
\label{sec:numerical-method}

For $\delta>0$, let $\Omega_{-\delta} := \qc{x\in\Omega: \dist
  (x,\partial\Omega)>\delta}.$ Let $\Omega_h$ be convex polygonal
domains satisfying $\Omega_{-\delta_h}\subset\Omega_h\subset\Omega,$
for $\delta_h\to0.$ Then $\abs{\Omega\setminus\Omega_h}\to0$ and
$\abs{\Omega_h}\to\abs{\Omega}$; throughout we assume $h$ is
sufficiently small that $\alpha<\abs{\Omega_h}$.

Let $\Th$ be a conforming triangulation of $\overline{\Omega_h}$ and
set $h := \max_{K\in\Th} \operatorname{diam}(K).$ The family
$\{\Th\}_{h\downarrow0}$ is assumed uniformly shape-regular and
quasi-uniform. We write $\Nh$ and $\Nhi$ for the sets of all vertices
and interior vertices, respectively, and define
\begin{equation}
  \Vht
  :=
  \qc{
    v_h\in \cont0(\overline{\Omega_h}):
    v_h|_K\in\mathbb P_1(K)
    \text{ for every }K\in\Th
  },
  \qquad
  \Vh
  :=
  \Vht\cap \hoz(\Omega_h).
\end{equation}
Let $\{\varphi_i\}_{i\in\Nh}$ be the nodal basis associated with the
vertices $\{x_i\}_{i\in\Nh}$. Then 
\begin{equation}
  \varphi_i\geq0,
  \qquad
  \sum_{i\in\Nh}\varphi_i=1
  \qquad
  \text{in }\Omega_h.
  \label{eq:partition-unity}
\end{equation}

For $v_h\in\Vh$, let $\mathcal E_hv_h$ denote its extension by zero to
$\Omega$. Then
\begin{equation}
  \mathcal E_hv_h\in \hoz(\Omega),
  \qquad
  \Norm{\mathcal E_hv_h}_{L^2(\Omega)}
  =
  \Norm{v_h}_{L^2(\Omega_h)},
  \qquad
  \Norm{\nabla\mathcal E_hv_h}_{L^2(\Omega)}
  =
  \Norm{\nabla v_h}_{L^2(\Omega_h)}.
  \label{eq:zero-extension-norms}
\end{equation}

Define the lumped nodal weights and the discrete measure by
\begin{equation}
  \omega_i
  :=
  \int_{\Omega_h}\varphi_i\dx,
  \qquad
  \muh
  :=
  \sum_{i\in\Nh}
  \omega_i\delta_{x_i}.
  \label{eq:nodal-measure}
\end{equation}
They satisfy
\begin{equation}
  \omega_i>0,
  \qquad
  \sum_{i\in\Nh}\omega_i
  =
  \abs{\Omega_h},
  \qquad
  \max_{i\in\Nh}\omega_i
  \leq
  Ch^2
  \longrightarrow0.
  \label{eq:nodal-weight-properties}
\end{equation}

The finite element bilinear form is
\begin{equation}
  a_h(v,w)
  :=
  \int_{\Omega_h}
  \nabla v\cdot\nabla w\dx.
  \label{eq:bilinear-form}
\end{equation}
For $v_h,w_h\in\Vh$,
\begin{equation}
  a_h(v_h,w_h)
  =
  \int_\Omega
  \nabla\mathcal E_hv_h
  \cdot
  \nabla\mathcal E_hw_h
  \dx.
  \label{eq:extended-bilinear-form}
\end{equation}

\begin{assumption}[Discrete maximum principle]
\label{ass:DMP}
For every $h$, the interior stiffness matrix $A_h := \qp{
  a_h(\varphi_j,\varphi_i) }_{i,j\in\Nhi} $ is an irreducible
$M$-matrix.
\end{assumption}

For conforming Delaunay triangulations, the stiffness matrix has
nonpositive off-diagonal entries. Assumption~\ref{ass:DMP} follows when
the graph of strictly negative interior couplings is connected; strictly
Delaunay or suitably connected non-obtuse triangulations are sufficient.
The resulting comparison principle is
\begin{equation}
  a_h(v_h,\varphi_i)
  \leq
  a_h(w_h,\varphi_i)
  \quad
  \text{for every }i\in\Nhi
  \quad\Longrightarrow\quad
  v_h(x_i)\leq w_h(x_i)
  \quad
  \text{for every }i\in\Nhi.
  \label{eq:discrete-comparison-summary}
\end{equation}

\subsection{Discrete rearrangement and variational problem}

Each nodal value $v_h(x_i)$ carries the mass $\omega_i$. The weighted
nodal rearrangement orders these values from largest to smallest while
retaining their masses; see
\cite{MossinoTemam1981,Rakotoson1999,RakotosonSeoane2000}. See also
Figure \ref{fig:discrete-rearrangement-schematic} for geometric
illustrations of the concepts introduced in this Section.

Let $\mathcal L_h(v_h) := \qc{ v_h(x_i): i\in\Nh }$ be the set of
distinct nodal values. For $c\in\mathcal L_h(v_h)$, define the rank
interval occupied by the level $c$ by
\begin{equation}
  \mathcal I_h(c;v_h)
  :=
  \left(
    \sum_{\substack{i\in\Nh\\v_h(x_i)>c}}
    \omega_i,
    \,
    \sum_{\substack{i\in\Nh\\v_h(x_i)\geq c}}
    \omega_i
  \right].
  \label{eq:nodal-rank-interval}
\end{equation}
Its length is the total lumped mass carried by that level (see Figure
\ref{fig:discrete-rearrangement-schematic}):
\begin{equation}
  \abs{\mathcal I_h(c;v_h)}
  =
  \sum_{\substack{i\in\Nh\\v_h(x_i)=c}}
  \omega_i.
  \label{eq:nodal-level-mass}
\end{equation}

The weighted decreasing rearrangement of $v_h$ is the step function
$v_h^\ast:(0,\abs{\Omega_h}]\to\reals$ defined by $v_h^\ast(\rho) :=
  c$ when $\rho\in\mathcal I_h(c;v_h).$ The discrete rearrangement
  functional is therefore
\begin{equation}
  \begin{aligned}
  \mathcal R_h(v_h)
  &:=
  \int_0^{\abs{\Omega_h}}
  v_h^\ast(\rho)G(\rho)\mathop{}\d\rho
  \\
  &=
  \sum_{c\in\mathcal L_h(v_h)}
  c
  \int_{\mathcal I_h(c;v_h)}
  G(\rho)\mathop{}\d\rho.
  \end{aligned}
  \label{eq:discrete-rearrangement-functional}
\end{equation}
Since $G$ is nondecreasing, $\mathcal R_h$ is concave and continuous, motivating the definition 
\begin{equation}
  \mathcal J_h(v_h)
  :=
  \frac12
  a_h(v_h,v_h)
  -
  \mathcal R_h(v_h).
  \label{eq:discrete-energy}
\end{equation}

\begin{definition}[Discrete Grad--Mercier problem]
\label{def:discrete-problem}
The discrete problem is to find
\begin{equation}
  \uh
  :=
  \operatorname*{argmin}_{v_h\in\Vh}
  \mathcal J_h(v_h).
  \label{eq:discrete-problem}
\end{equation}
Equivalently, $\uh\in\Vh$ satisfies the variational inequality
\begin{equation}
  a_h(\uh,v_h-\uh)
  \geq
  \mathcal R_h(v_h)
  -
  \mathcal R_h(\uh)
  \qquad
  \text{for every }v_h\in\Vh.
  \label{eq:discrete-variational-inequality}
\end{equation}
\end{definition}

We next describe the supergradients appearing in the optimality
condition of (\ref{eq:discrete-problem}). For $v_h\in\Vh$ and
$i\in\Nh$, set
\begin{equation}
  \mathcal I_{h,i}[v_h]
  :=
  \mathcal I_h\qp{
    v_h(x_i);v_h
  }.
  \label{eq:nodal-rank-interval-at-node}
\end{equation}
Thus $\mathcal I_{h,i}[v_h]$ is the rank interval occupied by the
nodal level containing $v_h(x_i)$. The canonical nodal source
$\overline q_h[v_h] := \qp{ \overline q_{h,i}[v_h] }_{i\in\Nh}$ has
components
\begin{equation}
  \overline q_{h,i}[v_h]
  :=
  \frac{1}{
    \abs{\mathcal I_{h,i}[v_h]}
  }
  \int_{\mathcal I_{h,i}[v_h]}
  G(\rho)\mathop{}\d\rho.
  \label{eq:canonical-nodal-source}
\end{equation}
This is well defined because every nodal rank interval has strictly
positive mass.

In particular,
\begin{equation}
  \abs{
    \mathcal I_{h,i}[v_h]
  }
  =
  \omega_i
  \label{eq:untied-rank-interval-mass}
\end{equation}
when $v_h(x_i)$ is untied. Moreover,
\begin{equation}
  0
  \leq
  \overline q_{h,i}[v_h]
  \leq
  M_\alpha,
  \label{eq:canonical-source-bound}
\end{equation}
and
\begin{equation}
  \mathcal R_h(w_h)-\mathcal R_h(v_h)
  \leq
  \sum_{i\in\Nh}
  \omega_i
  \overline q_{h,i}[v_h]
  \qp{
    w_h(x_i)-v_h(x_i)
  }
  \label{eq:canonical-source-inequality}
\end{equation}
for every $w_h\in\Vh$.

More generally, a nodal vector $q_h=(q_{h,i})_{i\in\Nh}$ is an
admissible rearrangement source at $v_h$ if
\begin{equation}
  0\leq q_{h,i}\leq M_\alpha
  \qquad
  \text{for every }i\in\Nh
  \label{eq:admissible-source-bound}
\end{equation}
and
\begin{equation}
  \mathcal R_h(w_h)-\mathcal R_h(v_h)
  \leq
  \sum_{i\in\Nh}
  \omega_iq_{h,i}
  \qp{
    w_h(x_i)-v_h(x_i)
  }
  \label{eq:admissible-source-inequality}
\end{equation}
for every $w_h\in\Vh$. The canonical source is admissible, but it need
not be uniquely admissible on a tied level.

The optimality condition for \eqref{eq:discrete-problem} is equivalent
to the existence of an admissible source
$q_h^\ast=(q_{h,i}^\ast)_{i\in\Nh}$ at $\uh$ such that
\begin{equation}
  a_h(\uh,v_h)
  =
  \sum_{i\in\Nh}
  \omega_iq_{h,i}^\ast v_h(x_i)
  \qquad
  \text{for every }v_h\in\Vh.
  \label{eq:discrete-source-equation}
\end{equation}

For later use, define the continuous reconstruction
\begin{equation}
  s_h[q_h]
  :=
  \sum_{i\in\Nh}
  q_{h,i}\varphi_i
  \in\Vht.
  \label{eq:nodal-source-reconstruction}
\end{equation}
If $0\leq q_{h,i}\leq M_\alpha$, then
\begin{equation}
  0
  \leq
  s_h[q_h]
  \leq
  M_\alpha
  \qquad
  \text{in }\Omega_h.
  \label{eq:reconstructed-source-bound}
\end{equation}
Figure~\ref{fig:discrete-rearrangement-schematic} illustrates the
construction using the radial profile of
Section~\ref{sec:numerics-radial}. Here $M_h:=\max_{i\in\Nh}v_h(x_i),$
$D_h:=\qc{x\in\Omega_h:v_h(x)=M_h}.$
\begin{figure}[htbp]
  \centering
  \begin{subfigure}[t]{0.45\textwidth}
    \centering
    \includegraphics[width=\linewidth]
      {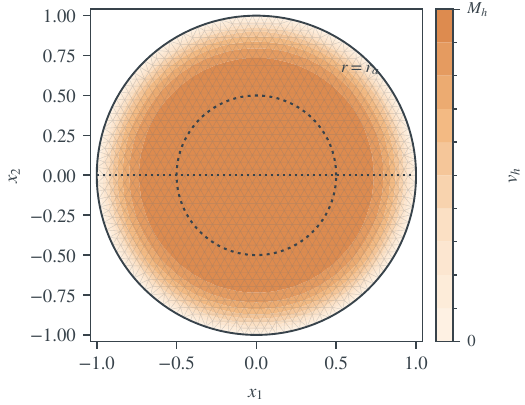}
    \caption{The finite element profile on $\Th$.}
    \label{fig:discrete-radial-profile}
  \end{subfigure}
  \hfill
  \begin{subfigure}[t]{0.45\textwidth}
    \centering
    \includegraphics[width=\linewidth]
      {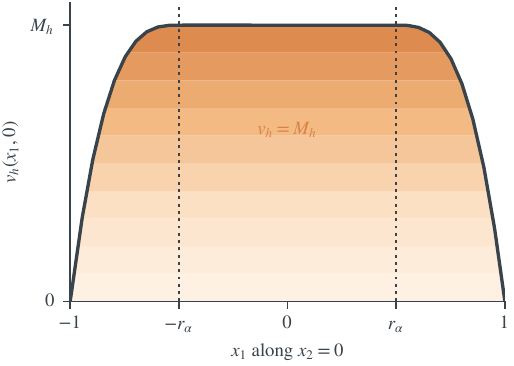}
    \caption{The profile along $x_2=0$.}
    \label{fig:discrete-diameter-profile}
  \end{subfigure}

  \begin{subfigure}[t]{0.45\textwidth}
    \centering
    \includegraphics[width=\linewidth]
      {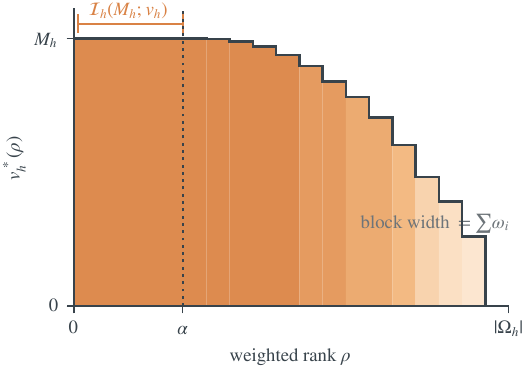}
    \caption{The weighted rearrangement $v_h^\ast$.}
    \label{fig:discrete-weighted-rearrangement}
  \end{subfigure}
  \hfill
  \begin{subfigure}[t]{0.45\textwidth}
    \centering
    \includegraphics[width=\linewidth]
      {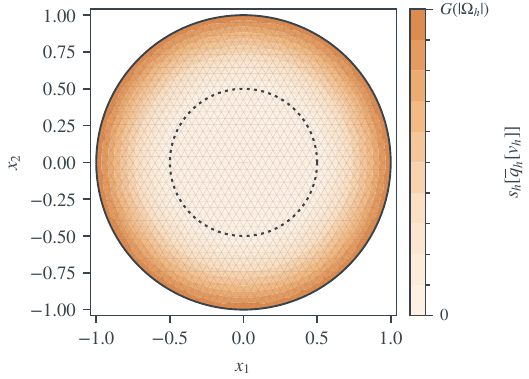}
    \caption{The source reconstruction
      $s_h[\overline q_h[v_h]]$.}
    \label{fig:discrete-canonical-source}
  \end{subfigure}
  \caption{
    The weighted nodal rearrangement and its canonical tied-level
    source. Equal nodal values occupy one rank interval
    $\mathcal I_h(c;v_h)$, whose width is their total lumped mass.
    The dotted lines mark $r=r_\alpha$ or the corresponding rank
    $\rho=\alpha$.
  }
  \label{fig:discrete-rearrangement-schematic}
\end{figure}

Finally, let $z_h\in\Vh$ be the discrete upper barrier solving
\begin{equation}
  a_h(z_h,v_h)
  =
  M_\alpha
  \int_{\Omega_h}v_h\dx
  \qquad
  \text{for every }v_h\in\Vh,
  \label{eq:discrete-barrier}
\end{equation}
and set
\begin{equation}
  K_h
  :=
  \qc{
    v_h\in\Vh:
    0\leq v_h(x_i)\leq z_h(x_i)
    \text{ for every }i\in\Nhi
  }.
  \label{eq:invariant-box}
\end{equation}
The source bounds and the discrete comparison principle imply
\begin{equation}
  \uh\in K_h.
  \label{eq:discrete-solution-bound}
\end{equation}

\subsection{Rearrangement iteration}

Let $u_h^0\in K_h$. Given $u_h^n$, set
\begin{equation}
  q_{h,i}^n
  :=
  \overline q_{h,i}[u_h^n],
  \qquad
  i\in\Nh,
  \label{eq:iterative-rank-source}
\end{equation}
and compute $\widehat u_h^{n+1}\in\Vh$ from
\begin{equation}
  a_h(\widehat u_h^{n+1},v_h)
  =
  \sum_{i\in\Nh}
  \omega_iq_{h,i}^n v_h(x_i)
  \qquad
  \text{for every }v_h\in\Vh.
  \label{eq:rearrangement-poisson-step}
\end{equation}
The relaxed update is
\begin{equation}
  u_h^{n+1}
  =
  (1-\theta_n)u_h^n
  +
  \theta_n\widehat u_h^{n+1},
  \label{eq:rearrangement-iteration}
\end{equation}
where
\begin{equation}
  0<\theta_n\leq1,
  \qquad
  \sum_{n=0}^\infty\theta_n=\infty,
  \qquad
  \sum_{n=0}^\infty\theta_n^2<\infty.
  \label{eq:step-size-conditions}
\end{equation}
For example,
\begin{equation}
  \theta_n
  =
  \frac{\theta_0}{n+1},
  \qquad
  0<\theta_0\leq1,
  \label{eq:example-step-size}
\end{equation}
satisfies these conditions.

This is the subgradient method for $\mathcal J_h$ preconditioned by the
discrete Poisson operator. The source bounds and comparison principle
give
\begin{equation}
  u_h^0\in K_h
  \quad\Longrightarrow\quad
  u_h^n\in K_h
  \qquad
  \text{for every }n\geq0.
  \label{eq:iteration-invariance}
\end{equation}
Each step requires one sorting operation, one source assembly, and one
Poisson solve with a fixed stiffness matrix. The sorting cost is
$O(N_h\log N_h)$.

\begin{theorem}[A posteriori energy bound]
\label{thm:discrete-energy-certificate}
Let $\qc{ u_h^j,\widehat u_h^{j+1} }_{j=0}^n$ be generated by the
rearrangement iteration
\eqref{eq:iterative-rank-source}--\eqref{eq:rearrangement-iteration}.
For $0\leq j\leq n$, define the stepwise object
\begin{equation}
  \ell_h^j
  :=
  a_h\qp{
    \widehat u_h^{j+1},u_h^j
  }
  -
  \mathcal R_h(u_h^j)
  -
  \frac12
  a_h\qp{
    \widehat u_h^{j+1},
    \widehat u_h^{j+1}
  }.
  \label{eq:iteration-lower-bound}
\end{equation}
The best floor available after $n$ iterations, the current iterate
energy, and their gap are
\begin{equation}
  L_h^n
  :=
  \max_{0\leq j\leq n}\ell_h^j,
  \qquad
  \mathcal J_h^n
  :=
  \mathcal J_h(u_h^n),
  \qquad
  \eta_h^n
  :=
  \mathcal J_h^n-L_h^n.
  \label{eq:iteration-energy-bounds}
\end{equation}
Writing $\mathcal J_h^\star := \mathcal J_h(u_h)$ for the minimum
energy, where $u_h$ is the unique discrete minimiser, we have
\begin{equation}
  L_h^n
  \leq
  \mathcal J_h^\star
  \leq
  \mathcal J_h^n.
  \label{eq:certified-energy-bracket}
\end{equation}
Consequently,
\begin{equation}
  0
  \leq
  \mathcal J_h^n-\mathcal J_h^\star
  \leq
  \eta_h^n,
  \label{eq:certified-energy-error}
\end{equation}
and
\begin{equation}
  \Norm{
    \nabla(u_h^n-u_h)
  }_{\leb2(\Omega_h)}
  \leq
  \sqrt{2\eta_h^n}.
  \label{eq:certified-energy-norm-error}
\end{equation}
\end{theorem}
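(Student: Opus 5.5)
The bracket rests on two facts: each $\ell_h^j$ is the minimum of an affine minorant of $\mathcal J_h$, and $\mathcal J_h$ is $1$-strongly convex in the energy norm. The upper bound $\mathcal J_h^\star\leq\mathcal J_h^n$ is immediate, since $u_h^n\in\Vh$ and $u_h$ minimises $\mathcal J_h$ over $\Vh$. For the lower bound it suffices to show $\ell_h^j\leq\mathcal J_h(v_h)$ for every $v_h\in\Vh$ and every $j$, and then take $v_h=u_h$ and maximise over $j$.

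Fix $j$ and write $q^j=\overline q_h[u_h^j]$. By the supergradient inequality \eqref{eq:canonical-source-inequality} with $v_h=u_h^j$ and $w_h$ arbitrary,
\[
\mathcal R_h(w_h)\leq\mathcal R_h(u_h^j)+\sum_{i\in\Nh}\omega_iq^j_i\bigl(w_h(x_i)-u_h^j(x_i)\bigr).
\]
By the Poisson step \eqref{eq:rearrangement-poisson-step}, the sum equals $a_h(\widehat u_h^{j+1},w_h-u_h^j)$. Hence
\[
\mathcal J_h(w_h)\geq\tfrac12a_h(w_h,w_h)-a_h(\widehat u_h^{j+1},w_h)+a_h(\widehat u_h^{j+1},u_h^j)-\mathcal R_h(u_h^j).
\]
The quadratic part is minimised at $w_h=\widehat u_h^{j+1}$ with value $-\tfrac12a_h(\widehat u_h^{j+1},\widehat u_h^{j+1})$. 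Indeed,
\[
\tfrac12a_h(w,w)-a_h(\widehat u,w)=\tfrac12a_h(w-\widehat u,w-\widehat u)-\tfrac12a_h(\widehat u,\widehat u).
\]
This gives $\mathcal J_h(w_h)\geq\ell_h^j$ for all $w_h$, which proves \eqref{eq:certified-energy-bracket}. Subtracting the two inequalities yields \eqref{eq:certified-energy-error} directly.

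For the energy-norm bound \eqref{eq:certified-energy-norm-error}, we use strong convexity. The map $v\mapsto\tfrac12a_h(v,v)$ satisfies the exact identity
\[
\tfrac12a_h(v,v)-\tfrac12a_h(u_h,u_h)-a_h(u_h,v-u_h)=\tfrac12a_h(v-u_h,v-u_h).
\]
Combining this with the variational inequality \eqref{eq:discrete-variational-inequality}, in the form $-\mathcal R_h(v)+\mathcal R_h(u_h)\geq-a_h(u_h,v-u_h)$, gives
\[
\mathcal J_h(v)-\mathcal J_h^\star\geq\tfrac12\Norm{\nabla(v-u_h)}_{\leb2(\Omega_h)}^2.
\]
Taking $v=u_h^n$ and inserting \eqref{eq:certified-energy-error} yields the claim.

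The only delicate point is confirming that the sum produced by the supergradient inequality coincides with $a_h(\widehat u_h^{j+1},\cdot)$ tested on $w_h-u_h^j$. This requires $w_h-u_h^j\in\Vh$, which holds, and it includes boundary nodes, where both functions vanish, so those terms are harmless. The canonical source is admissible by \eqref{eq:canonical-source-inequality}, so no further property of the tied-level construction is needed.
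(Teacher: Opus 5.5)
Your proposal is correct and follows the paper's proof essentially step for step. You combine the canonical-source supergradient inequality with the Poisson step and complete the square to get $\mathcal J_h(v_h)\geq\ell_h^j$ for all $v_h\in\Vh$, and you obtain the energy-norm bound from the strong-convexity estimate given by the variational inequality, exactly as in Lemma~\ref{lem:discrete-well-posedness}.
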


\begin{theorem}[Well-posedness and convergence of the discrete method]
\label{thm:main-nonlinear}
Let the domain and mesh hypotheses above hold, and let
Assumptions~\ref{ass:rank-law} and~\ref{ass:DMP} hold.

For every sufficiently small $h$, the discrete problem
\eqref{eq:discrete-problem} has a unique solution $\uh\in K_h.$

For every $u_h^0\in K_h$ and every step sequence satisfying
\eqref{eq:step-size-conditions}, the iteration
\eqref{eq:rearrangement-poisson-step}--\eqref{eq:rearrangement-iteration}
converges to $\uh$:
\begin{equation}
  \Norm{
    u_h^n-\uh
  }_{\hoz(\Omega_h)}
  \longrightarrow0
  \qquad
  \text{as }n\longrightarrow\infty.
  \label{eq:iteration-convergence}
\end{equation}
For fixed $h$, convergence also holds in $L^\infty(\Omega_h)$.

Let $u$ be the continuum solution from
Theorem~\ref{thm:continuum-well-posedness}. Then, as $h\to0$,
\begin{align}
  \mathcal E_h\uh
  &\longrightarrow u
  &&\text{strongly in }\hoz(\Omega),
  \label{eq:main-H1-convergence}
  \\
  \mathcal E_h\uh
  &\longrightarrow u
  &&\text{strongly in }\leb{\infty}(\Omega).
  \label{eq:main-Linf-convergence}
\end{align}

For each $h$, choose any admissible equilibrium source
$q_h^\ast$ satisfying \eqref{eq:discrete-source-equation}, and define
\begin{equation}
  s_h
  :=
  s_h[q_h^\ast].
  \label{eq:equilibrium-source-reconstruction}
\end{equation}
Then, independently of the choice of $q_h^\ast$, its zero extension
satisfies
\begin{equation}
  \widetilde{s_h}
  \stackrel{*}{\rightharpoonup}
  g\qp{
    \abs{u\geq u(\cdot)}
  }
  \qquad
  \text{in }\leb{\infty}(\Omega)
  \quad\text{as }h\to0.
  \label{eq:main-source-convergence}
\end{equation}
Equivalently,
\begin{equation}
\int_\Omega
\widetilde{s_h}\varphi\dx
\longrightarrow
  \int_\Omega
  g\qp{
    \abs{u\geq u(\cdot)}
    }
  \varphi\dx
  \qquad
  \text{for every }\varphi\in \leb 1(\Omega).
  \label{eq:main-source-convergence-duality}
\end{equation}
In particular, the limiting solution has a maximum plateau of measure
$\alpha$ and no positive-measure level sets below its maximum.
\end{theorem}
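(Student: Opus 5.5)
The proof has four parts: discrete well-posedness, convergence of the iteration on a fixed mesh, convergence of the discrete minimisers as $h\to0$, and identification of the limiting source.

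\textbf{Discrete well-posedness.} On the finite-dimensional space $\Vh$ the form $a_h$ is coercive. The functional $\mathcal R_h$ is concave and continuous, with $|\mathcal R_h(v_h)|\le M_\alpha\Norm{v_h}_{L^1}$ up to lumping constants. Hence $\mathcal J_h$ is strictly convex, continuous and coercive, so it has a unique minimiser. The optimality condition \eqref{eq:discrete-source-equation} gives an admissible source with $0\le q^\ast_{h,i}\le M_\alpha$. Comparing with the zero function and with $z_h$ through \eqref{eq:discrete-comparison-summary} yields $\uh\in K_h$.

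\textbf{Fixed-mesh convergence of the iteration.} I treat the iteration as a subgradient method in the $a_h$ inner product. The map $v\mapsto\tfrac12a_h(v,v)-\sum_i\omega_iq^n_{h,i}v(x_i)$ has $a_h$-gradient $v-\widehat u^{n+1}_h$. Write $e_n:=\Norm{\nabla(u^n_h-\uh)}^2$. Expanding $e_{n+1}$ gives
\[
e_{n+1}=e_n-2\theta_n\,a_h(u^n_h-\widehat u^{n+1}_h,u^n_h-\uh)+\theta_n^2\Norm{\nabla(u^n_h-\widehat u^{n+1}_h)}^2 .
\]
Strong convexity together with \eqref{eq:canonical-source-inequality} gives
\[
a_h(u^n_h-\widehat u^{n+1}_h,u^n_h-\uh)\ge\mathcal J_h(u^n_h)-\mathcal J_h^\star+\tfrac12e_n .
\]
The last term in the expansion is bounded uniformly, since the iterates stay in the bounded set $K_h$ and the sources are bounded by $M_\alpha$. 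Therefore $e_{n+1}\le(1-\theta_n)e_n+C\theta_n^2$. The step conditions \eqref{eq:step-size-conditions} then give $e_n\to0$. On a fixed mesh all norms are equivalent, so convergence also holds in $L^\infty$.

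\textbf{Convergence as $h\to0$.} Testing \eqref{eq:discrete-source-equation} with $\uh$ gives a uniform $H^1$ bound. To obtain $L^\infty$ control I compare with $z_h$: the barriers converge uniformly to the continuum solution of $-\Delta z=M_\alpha$, and the DMP gives a uniform bound. I then extract a subsequence with $\mathcal E_h\uh\rightharpoonup u^\ast$ weakly in $\hoz(\Omega)$. I would also like uniform convergence of the subsequence, and I would try to obtain it from $W^{2,p}$ or H\"older estimates for mass-lumped Poisson problems with $L^\infty$ data on convex polygons; that is likely where the paper's compactness lemma sits.

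The key consistency step is a $\Gamma$-type argument. For a uniformly convergent sequence $v_h\to v$, I need $\mathcal R_h(v_h)\to\mathcal R(v):=\int_0^{|\Omega|}v^\ast G\,d\rho$. The rearrangement is $1$-Lipschitz from $L^\infty$ to $L^\infty$, and the weighted discrete rearrangement of the nodal interpolant approximates the continuum rearrangement in $L^1(0,|\Omega|)$, since it is the exact rearrangement of the lumped piecewise-constant function. No atom-free hypothesis is needed, because $\int v^\ast G$ is continuous under $L^1$ convergence of rearrangements.

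I then pass to the limit in the inequality $\mathcal J_h(\uh)\le\mathcal J_h(I_hv)$ for smooth $v$, using weak lower semicontinuity of the Dirichlet term and density. This shows that $u^\ast$ minimises $\mathcal J(v)=\tfrac12\int|\nabla v|^2-\mathcal R(v)$. Energy convergence then upgrades weak to strong $H^1$ convergence.

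It remains to identify the minimiser of $\mathcal J$ with the Grad--Mercier solution, and I expect this to be the main obstacle. The minimiser of the convex energy satisfies $-\Delta u^\ast=q$ for some $q$ in the superdifferential of $\mathcal R$. I must show that the solution $u$ of Theorem~\ref{thm:continuum-well-posedness} satisfies exactly this condition with $q=G(|u\ge u(\cdot)|)$. Off the plateau the level sets are null, so $q$ is determined. On the plateau every superdifferential element is an average of $G$ over $(0,\alpha]$, which is zero. Hence $u$ is a critical point of the convex energy, so $u=u^\ast$ and the whole sequence converges.

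\textbf{Source convergence.} The reconstructions $\widetilde{s_h}$ are bounded in $L^\infty$, so they have weak-$*$ limits. Passing to the limit in \eqref{eq:discrete-source-equation} with lumping errors of order $O(h)$ shows $-\Delta u=s$ for any such limit $s$. Since $-\Delta u$ is determined by $u$, the limit $s$ must equal $g(|u\ge u(\cdot)|)$, independently of the choice of $q^\ast_h$. The plateau statement follows from Theorem~\ref{thm:continuum-well-posedness}.
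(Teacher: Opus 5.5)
Your fixed-mesh arguments (well-posedness, the barrier bound, and the recursion $e_{n+1}\le(1-\theta_n)e_n+C\theta_n^2$ from the strong subgradient inequality) coincide with the paper's. Your $\Gamma$-type limit is a legitimate alternative to the paper's passage to the limit in the supergradient inequality. Two genuine gaps remain, however.

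First, the uniform convergence \eqref{eq:main-Linf-convergence} is never proved; you only hope for discrete $W^{2,p}$ or H\"older estimates. This also undermines your consistency step as written. You state it for uniformly convergent sequences and then apply it to $u_h$, and the uniform bound through $z_h$ presupposes the same kind of linear $L^\infty$ convergence. The $H^1$ part can be rescued without uniform convergence. By \eqref{eq:discrete-rearrangement-Lipschitz}, Cauchy--Schwarz and Lemma~\ref{lem:lumped-norm-equivalence}, $|\mathcal R_h(u_h)-\mathcal R_h(I_h\psi)|\le C\|u_h-I_h\psi\|_{L^2(\Omega_h)}$ for $\psi\in C_c^\infty(\Omega)$, so Rellich compactness and density give $\mathcal R_h(u_h)\to\mathcal R(u^\ast)$. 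The $L^\infty$ conclusion, however, needs the mechanism of Lemma~\ref{lem:linear-compactness-consistency}, in three comparisons. Compare $u_h$ with the Ritz projection of the continuous solution on $\Omega_h$ with source $s_h$; the $L^\infty$ error is $Ch(1+|\log h|)^{1/2}$ by Miranda--Talenti $H^2$ bounds on the convex $\Omega_h$, interpolation and the discrete Sobolev inequality, plus the lumping error from Lemma~\ref{lem:mass-lumping-error}. Compare that solution with the fixed-domain solution with zero-extended source by the maximum principle; the error is $C\delta_h$, using the $W^{1,\infty}$ bound from $W^{2,p}$ regularity on the $C^{1,1}$ domain $\Omega$. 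Finally, obtain $C^1(\overline\Omega)$ compactness of the fixed-domain solutions from $W^{2,p}\hookrightarrow C^{1,\gamma}$, and identify the limit through the weak-$\ast$ limit of the sources.

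Second, your identification of the minimiser with the Grad--Mercier solution rests on the claim that the level sets of $u$ below its maximum are null. This is not contained in Theorem~\ref{thm:continuum-well-posedness}, which only gives $|D_u|=\alpha$. Your closing sentence therefore also leaves the second half of the theorem's final claim unproved. The property is essential. If $E_t=\{u=t\}$ has positive measure with $t<M$, testing with $u\pm\varepsilon\chi_{E_t}$ shows that any supergradient has mean over $E_t$ equal to the mean of $G$ over $(|u>t|,|u\ge t|]$. But $G(|u\ge t|)$ is the maximum of $G$ there, so $G(|u\ge u(\cdot)|)$ is in general not a supergradient. The paper proves the property in Lemma~\ref{lem:continuum-no-secondary-plateaux}. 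Stampacchia's theorem gives $\nabla u=0$ a.e. on $E_t$; applied again to $\partial_ju\in W^{1,\infty}(\Omega)$, it gives $\Delta u=0$ a.e. on $E_t$, contradicting $-\Delta u=g(|u\ge t|)>0$.

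Even granting null level sets, your statement that they determine every superdifferential element is an unproved differentiability property of $\mathcal R$. You would also need $\partial^+\mathcal R(u)\neq\emptyset$ to conclude that $G(r_u)$, with $r_u(x)=|u\ge u(x)|$, lies in it. Lemma~\ref{lem:continuum-variational-characterisation} avoids this by a direct verification. The rank map pushes $\mathcal L^2|_{\Omega\setminus D_u}$ forward to $\mathcal L^1|_{(\alpha,|\Omega|)}$ and $G=0$ on $(0,\alpha]$, so $q_u=G(r_u)$ is equimeasurable with $G$. Since $q_u$ is oppositely ordered to $u$, Hardy--Littlewood gives $\mathcal R(u)=\int_\Omega uq_u\dx$, while the infimum representation gives $\mathcal R(v)\le\int_\Omega vq_u\dx$ for all $v$. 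Subtracting yields the supergradient inequality, and hence $u$ minimises $\mathcal J$.
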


\section{Numerical experiments}
\label{sec:numerical-experiments}

The numerical study has two parts. The first verifies the method against
an exact radial Grad--Mercier solution and measures convergence under mesh
refinement. The second demonstrates the method on a smooth, strictly
convex plasma cross-section with elongation and triangularity.

On each mesh, the discrete minimiser is approximated by the
rearrangement iteration
\eqref{eq:iterative-rank-source}--\eqref{eq:rearrangement-iteration},
starting from $u_h^0=0$. Two initial full steps are followed by
$\theta_n = \frac14 \qp{1+\frac{n}{32}}^{-3/5}.$

Each iteration requires one sorting operation, one canonical source
assembly, and one Poisson solve with the fixed stiffness matrix.

At every iteration, we record the current energy $\mathcal J_h^n :=
\mathcal J_h(u_h^n),$ $\eta_h^n := \mathcal J_h^n-L_h^n,$ where
$L_h^n$ is the lower bound from
Theorem~\ref{thm:discrete-energy-certificate}. We terminate the
iteration when
\begin{equation}
  \sqrt{2\eta_h^n}
  \leq
  10^{-4}.
  \label{eq:iteration-stopping-criterion}
\end{equation}
The reported iteration count is the number of Poisson solves required
to satisfy this criterion.

\subsection{Exact radial verification}
\label{sec:numerics-radial}

Let
\begin{equation}
  \Omega
  =
  B_1(0),
  \qquad
  g(\rho)
  =
  \rho-\alpha,
  \qquad
  \alpha
  =
  \frac{\pi}{4},
  \qquad
  G(\rho)
  =
  (\rho-\alpha)_+.
  \label{eq:radial-test-problem}
\end{equation}
The dead-core radius is
\begin{equation}
  r_\alpha
  =
  \sqrt{\frac{\alpha}{\pi}}
  =
  \frac12.
  \label{eq:radial-dead-core-radius}
\end{equation}
For a radial nonincreasing solution, $\abs{\qc{u\geq u(r)}} = \pi r^2$
for $r>r_\alpha,$ and hence
\begin{equation}
  s(r)
  =
  \pi
  \qp{
    r^2-r_\alpha^2
  }_+.
  \label{eq:radial-exact-source}
\end{equation}
The exact solution is
\begin{equation}
  u(r)
  =
  \begin{cases}
    \displaystyle
    u(r_\alpha),
      &0\leq r\leq r_\alpha,
      \\[2mm]
    \displaystyle
    \frac{\pi}{4}
    \qb{
      \frac{1-r^4}{4}
      -
      r_\alpha^2(1-r^2)
      +
      r_\alpha^4
      \log\qp{\frac1r}
    },
      &r_\alpha<r\leq1.
  \end{cases}
  \label{eq:radial-exact-solution}
\end{equation}
Thus the exact dead core is $B_{1/2}(0)$ and has area $\alpha$.

The unit disk is approximated by inscribed polygonal domains equipped with
conforming Delaunay triangulations. We measure
\begin{align}
  E_{\sobh1}(h)
  &:=
  \Norm{
    \nabla(
      \mathcal E_h\uh-u
    )
  }_{\leb2(\Omega)},
  \label{eq:radial-H1-error}
  \\
  E_\infty(h)
  &:=
  \Norm{
    \mathcal E_h\uh-u
  }_{\leb{\infty}(\Omega)}.
  \label{eq:radial-Linf-error}
\end{align}
For consecutive meshes $h_{j-1}>h_j$, the observed rate is
\begin{equation}
  \operatorname{rate}_E(h_j)
  :=
  \frac{
    \log\qp{
      E(h_{j-1})/E(h_j)
    }
  }{
    \log\qp{
      h_{j-1}/h_j
    }
  }.
  \label{eq:observed-convergence-rate}
\end{equation}

Table~\ref{tab:radial-refinement} shows first-order convergence in the
$H^1$ seminorm and approximately second-order convergence in the maximum
norm. The small iteration counts on the finer meshes result from the
special radial structure. Starting from zero, the first full step solves
a constant-source problem and produces a radially ordered discrete torsion
function. The second full step therefore uses a source approximating
\eqref{eq:radial-exact-source} and produces an iterate already close to the
discrete minimiser. This iterate converges at the third Poisson solve on
all meshes. 

\begin{table}[htbp]
  \centering
\small
\setlength{\tabcolsep}{4pt}
\renewcommand{\arraystretch}{0.92}
\begin{tabular}{crccccc}
\toprule
$h$ & $\dim\Vh$ & $E_{H^1}$ & $\text{rate}_{E_{H^1}}$ & $E_\infty$ & $\text{rate}_{E_\infty}$ & $n_{\mathrm{it}}$ \\
\midrule
$1.127\times10^{-2}$ & $92\,055$ & $3.335\times10^{-3}$ & -- & $2.606\times10^{-5}$ & -- & $3$ \\
$8.116\times10^{-3}$ & $180\,893$ & $2.367\times10^{-3}$ & $1.05$ & $1.292\times10^{-5}$ & $2.14$ & $3$ \\
$5.752\times10^{-3}$ & $369\,852$ & $1.649\times10^{-3}$ & $1.05$ & $6.316\times10^{-6}$ & $2.08$ & $3$ \\
$4.099\times10^{-3}$ & $725\,816$ & $1.174\times10^{-3}$ & $1.00$ & $3.282\times10^{-6}$ & $1.93$ & $3$ \\
$3.572\times10^{-3}$ & $948\,380$ & $1.027\times10^{-3}$ & $0.98$ & $2.493\times10^{-6}$ & $2.00$ & $3$ \\
\bottomrule
\end{tabular}
  \caption{
    Radial refinement study against the exact solution
    \eqref{eq:radial-exact-solution}. The final column is the number of
    Poisson solves required for the stopping criteria.
  }
  \label{tab:radial-refinement}
\end{table}

\subsection{Shaped plasma cross-section}
\label{sec:numerics-shaped}

The second experiment considers a smooth convex cross-section with
elongation and triangularity. Its boundary is described by the support
function
\begin{equation}
  \Gamma(\vartheta)
  =
  a
  \qp{
    1
    +
    \varepsilon_2\cos{2\vartheta}
    +
    \varepsilon_3\cos{3\vartheta}
  },
  \qquad
  0\leq\vartheta<2\pi,
  \label{eq:shaped-support-function}
\end{equation}
through the parametrisation
\begin{align}
  x_1(\vartheta)
  &:=
  R_0
  +
  \Gamma(\vartheta)\cos\vartheta
  -
  \Gamma'(\vartheta)\sin\vartheta,
  \nonumber\\
  x_2(\vartheta)
  &:=
  \Gamma(\vartheta)\sin\vartheta
  +
  \Gamma'(\vartheta)\cos\vartheta.
  \label{eq:shaped-domain}
\end{align}
We take
\begin{equation}
  R_0=1.7,
  \qquad
  a=0.6,
  \qquad
  \varepsilon_2=-0.15,
  \qquad
  \varepsilon_3=0.03.
  \label{eq:shaped-domain-parameters}
\end{equation}
Since
\begin{equation}
  \Gamma(\vartheta)
  +
  \Gamma''(\vartheta)
  \geq
  a
  \qp{
    1
    -
    3\abs{\varepsilon_2}
    -
    8\abs{\varepsilon_3}
  }
  >0,
  \label{eq:shaped-domain-convexity}
\end{equation}
the boundary is strictly convex.

Its area is
\begin{equation}
  A
  =
  \pi a^2
  \qp{
    1
    -
    \frac32\varepsilon_2^2
    -
    4\varepsilon_3^2
  }.
  \label{eq:shaped-domain-area}
\end{equation}
We prescribe
\begin{equation}
  \alpha
  =
  0.2A,
  \qquad
  g(\rho)
  =
  \qp{
    \frac{\rho}{A}
  }^2
  -
  \qp{
    \frac{\alpha}{A}
  }^2,
  \qquad
  0\leq\rho\leq A.
  \label{eq:shaped-source}
\end{equation}
The corresponding clipped rank law is
\begin{equation}
  G(\rho)
  =
  \qb{
    \qp{
      \frac{\rho}{A}
    }^2
    -
    \qp{
      \frac{\alpha}{A}
    }^2
  }_+.
  \label{eq:shaped-clipped-source}
\end{equation}

Each refinement level is initialised independently from zero.

Let
\[
  s_{h,N_h}^{\mathrm{can}}
  :=
  s_h\qb{\overline q_h[u_h^{N_h}]},
  \qquad
  D_h^0
  :=
  \qc{
    x\in\Omega_h:
    s_{h,N_h}^{\mathrm{can}}(x)=0
  }.
  \]
Figure~\ref{fig:shaped-fields} shows the terminal computed equilibrium
$u_h^{N_h}$ and the boundary of $D_h^0$.  The latter is obtained
directly from the zero set of the computed piecewise-affine canonical
source.

\begin{figure}[tbp]
  \centering
  \begin{subfigure}[t]{0.42\textwidth}
    \centering
    \includegraphics[width=\linewidth]
      {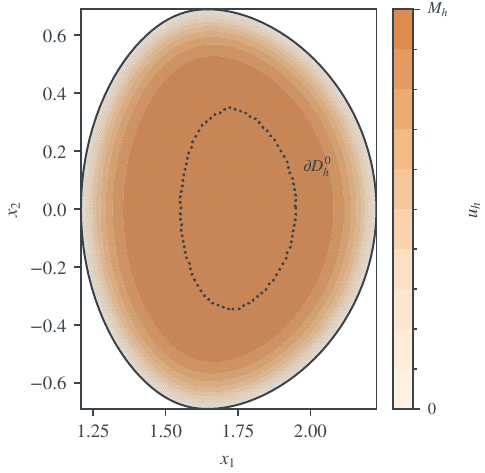}
    \caption{Shaped plasma cross-section on the finest mesh.}
  \end{subfigure}
  \hfill
  \begin{subfigure}[t]{0.54\textwidth}
    \centering
    \includegraphics[width=\linewidth]
      {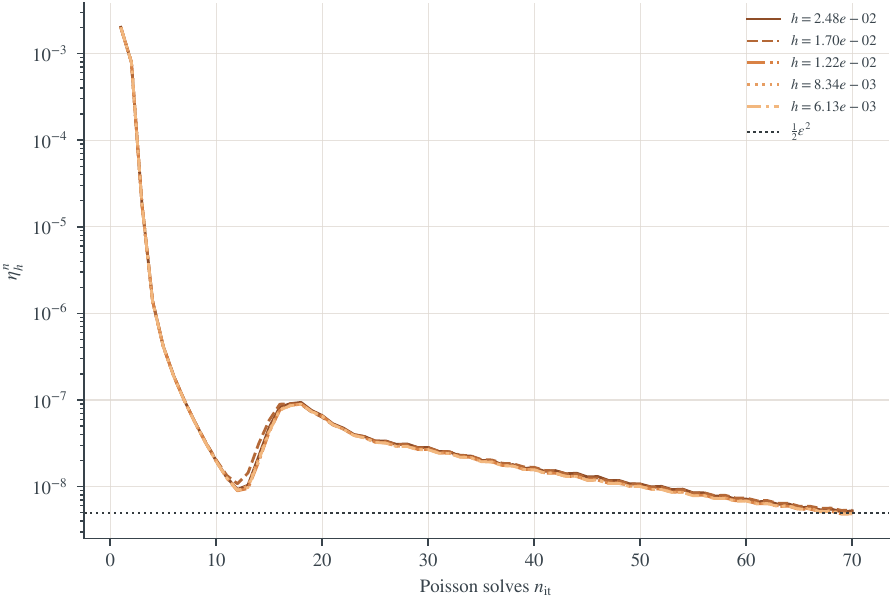}
    \caption{A posteriori energy-gap histories for the shaped refinement study.}
    \label{fig:shaped-aposteriori}
  \end{subfigure}
  \caption{Shaped plasma cross-section and a posteriori energy-gap histories.}
  \label{fig:shaped-fields}
\end{figure}

Table~\ref{tab:shaped-zero-source-area} reports the area of $D_h^0$.
Let $E_D(h) := \norm{|D_h^0|-\alpha}.$ This discrepancy decreases
consistently under refinement, while the number of Poisson solves
remains broadly mesh-independent.

\begin{table}[htbp]
  \centering
\small
\setlength{\tabcolsep}{4pt}
\renewcommand{\arraystretch}{0.92}
\begin{tabular}{crrrrrr}
\toprule
$h$ & $\dim\Vh$ & $|D_h^0|$ & $\alpha$ &
$E_D$ & $\text{rate}_{E_D}$ & $n_{\mathrm{it}}$ \\
\midrule
$6.129\times10^{-3}$ & $88\,902$  & $2.145\times10^{-1}$ & $2.177\times10^{-1}$ & $3.210\times10^{-3}$ & --     & $75$ \\
$4.308\times10^{-3}$ & $174\,519$ & $2.155\times10^{-1}$ & $2.177\times10^{-1}$ & $2.250\times10^{-3}$ & $1.01$ & $70$ \\
$2.985\times10^{-3}$ & $356\,652$ & $2.162\times10^{-1}$ & $2.177\times10^{-1}$ & $1.572\times10^{-3}$ & $0.98$ & $70$ \\
$2.153\times10^{-3}$ & $699\,530$ & $2.166\times10^{-1}$ & $2.177\times10^{-1}$ & $1.125\times10^{-3}$ & $1.02$ & $70$ \\
$1.908\times10^{-3}$ & $913\,893$ & $2.168\times10^{-1}$ & $2.177\times10^{-1}$ & $9.827\times10^{-4}$ & $1.12$ & $70$ \\
\bottomrule
\end{tabular}
\caption{
  Area of the zero canonical-source region for the shaped problem.
  The discrepancy $E_D$ converges at approximately first order.
  The final column is the number of Poisson solves required to satisfy
  the stopping criterion.
}
  \label{tab:shaped-zero-source-area}
\end{table}

Figure~\ref{fig:shaped-aposteriori} shows the decay of $\eta_h^n$ over
the rearrangement iteration for the shaped refinement
sequence. Although no exact solution is available for this geometry,
the experiment provides a non-radial illustration of the method and
shows approximately first-order convergence of the computed
zero-source area towards the prescribed value $\alpha$.

\section{Technical analysis}
\label{sec:proofs}

The proof is organised in four steps. We first collect the
mass-lumping and comparison estimates used throughout. We then analyse
the discrete rearrangement, its supergradients, and the fixed-mesh
iteration. Next we establish consistency for mass-lumped Poisson
problems on the varying polygonal domains and identify the continuum
variational structure. Finally, these ingredients are combined to pass
to the limit in the discrete equilibria and their reconstructed
sources.

\subsection{Lumped quadrature and comparison}

Throughout this section, $I_h$ denotes the nodal interpolation
operator on $\Th$. The following is the standard consistency property
of the nodal quadrature associated with the lumped-mass $\mathbb P_1$
finite element method; see, for example, \cite{ChenThomee1985}. We
record it in measure form because this formulation will be used below.

\begin{lemma}[Convergence of the lumped nodal measures]
\label{lem:nodal-measure-convergence}
For every $\psi\in \cont0(\overline\Omega)$,
\begin{equation}
  \int_{\overline\Omega}
  \psi
  \d\muh
  =
  \sum_{i\in\Nh}
  \omega_i\psi(x_i)
  =
  \int_{\Omega_h}
  I_h\psi\dx.
  \label{eq:nodal-quadrature}
\end{equation}
Consequently,
\begin{equation}
  \muh
  \rightharpoonup
  \mathcal L^2|_\Omega
  \qquad
  \text{weakly as finite measures on }\overline\Omega.
  \label{eq:mu-weak}
\end{equation}
\end{lemma}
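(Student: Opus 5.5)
The identity \eqref{eq:nodal-quadrature} follows directly from the definitions. Since $I_h\psi=\sum_{i\in\Nh}\psi(x_i)\varphi_i$ on $\Omega_h$, integrating over $\Omega_h$ and using $\omega_i=\int_{\Omega_h}\varphi_i\dx$ from \eqref{eq:nodal-measure} gives
\[
\int_{\Omega_h}I_h\psi\dx=\sum_{i\in\Nh}\omega_i\psi(x_i)=\int_{\overline\Omega}\psi\d\muh .
\]
Here the vertices lie in $\overline{\Omega_h}\subset\overline\Omega$, so $\psi(x_i)$ is defined and $\muh$ is a finite measure on $\overline\Omega$.

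For the weak convergence \eqref{eq:mu-weak}, we must show that $\int_{\Omega_h}I_h\psi\dx\to\int_\Omega\psi\dx$ for every $\psi\in\cont0(\overline\Omega)$. Split the error as
\[
\int_{\Omega_h}(I_h\psi-\psi)\dx-\int_{\Omega\setminus\Omega_h}\psi\dx .
\]
The second term is bounded by $\Norm{\psi}_{\leb\infty}\abs{\Omega\setminus\Omega_h}\to0$, using the domain approximation hypothesis $\Omega_{-\delta_h}\subset\Omega_h$. For the first term, let $\varpi$ be the modulus of continuity of $\psi$ on the compact set $\overline\Omega$, which exists because $\psi$ is uniformly continuous there. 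On each $K\in\Th$, the value $I_h\psi(x)$ is a convex combination of the values $\psi(x_i)$ at the vertices of $K$, by \eqref{eq:partition-unity} and $\varphi_i\ge0$. Each such vertex lies within distance $h$ of $x$, so $\abs{I_h\psi(x)-\psi(x)}\le\varpi(h)$. Hence the first term is at most $\abs{\Omega}\varpi(h)\to0$.

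The total masses also converge: taking $\psi\equiv1$ gives $\muh(\overline\Omega)=\abs{\Omega_h}\to\abs{\Omega}$, consistent with \eqref{eq:nodal-weight-properties}. Convergence against all of $\cont0(\overline\Omega)$ on a compact set is exactly weak (narrow) convergence of finite measures, which proves \eqref{eq:mu-weak}.

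The only step needing care is the uniform interpolation estimate for merely continuous $\psi$. It rests solely on $\operatorname{diam}K\le h$ and the positivity and partition-of-unity properties of the basis, so shape regularity is not needed here.
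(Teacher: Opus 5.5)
Your proposal is correct and follows the paper's own proof. You derive the quadrature identity from the definition of the weights and the nodal expansion of $I_h\psi$, then bound the error by the interpolation error on $\Omega_h$ plus the contribution of $\Omega\setminus\Omega_h$. Your modulus-of-continuity argument for $I_h\psi\to\psi$ uniformly on $\Omega_h$ justifies a step the paper only asserts.
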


\begin{proof}
The first identity follows from the definition of the weights and the
partition of unity:
\[
  \sum_{i\in\Nh}
  \omega_i\psi(x_i)
  =
  \int_{\Omega_h}
  \sum_{i\in\Nh}
  \psi(x_i)\varphi_i\dx
  =
  \int_{\Omega_h}
  I_h\psi\dx.
\]
Since $I_h\psi\to\psi$ uniformly on $\Omega_h$ and
$\abs{\Omega\setminus\Omega_h}\to0$,
\[
  \abs{
    \int_{\overline\Omega}
    \psi\d\muh
    -
    \int_\Omega\psi\dx
  }
  \leq
  \abs{\Omega_h}
  \Norm{I_h\psi-\psi}_{L^\infty(\Omega_h)}
  +
  \int_{\Omega\setminus\Omega_h}
  \abs{\psi}\dx
  \longrightarrow0.
\]
\end{proof}

\begin{lemma}[Lumped norm equivalence]
\label{lem:lumped-norm-equivalence}
There are constants $c,C>0$, independent of $h$, such that
\begin{equation}
  c
  \Norm{v_h}_{L^2(\Omega_h)}^2
  \leq
  \sum_{i\in\Nh}
  \omega_i
  \abs{v_h(x_i)}^2
  \leq
  C
  \Norm{v_h}_{L^2(\Omega_h)}^2
  \label{eq:lumped-L2-equivalence}
\end{equation}
for every $v_h\in\Vht$.
\end{lemma}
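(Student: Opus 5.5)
The plan is to localise to a single element and then sum over the mesh. Both sides of \eqref{eq:lumped-L2-equivalence} decompose over triangles, so it is enough to prove the two-sided bound element by element with constants depending only on the shape-regularity parameter.

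\emph{Decomposition.} By the definition \eqref{eq:nodal-measure} of $\omega_i$, we have $\omega_i=\sum_{K\ni x_i}\int_K\varphi_i\dx$. For $\mathbb P_1$ on a triangle, $\int_K\varphi_i\dx=\abs{K}/3$ for each vertex $x_i$ of $K$. Hence
\[
  \sum_{i\in\Nh}\omega_i\abs{v_h(x_i)}^2
  =
  \sum_{K\in\Th}\frac{\abs{K}}{3}\sum_{j=1}^{3}\abs{v_h(a_j^K)}^2,
\]
where $a_1^K,a_2^K,a_3^K$ are the vertices of $K$. Similarly, $\Norm{v_h}_{L^2(\Omega_h)}^2=\sum_K\Norm{v_h}_{L^2(K)}^2$. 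It therefore suffices to show that, for every $K$ and every affine $p$,
\[
  c\Norm{p}_{L^2(K)}^2\leq \frac{\abs{K}}{3}\sum_{j}\abs{p(a_j^K)}^2\leq C\Norm{p}_{L^2(K)}^2 .
\]

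\emph{Element bound.} Map affinely to the reference triangle $\widehat K$ via $F_K$. Both quantities scale exactly by $\abs{K}/\abs{\widehat K}$: the $L^2$ norm squared through the Jacobian, and the nodal sum because vertices map to vertices. So the inequality reduces to $\widehat K$ and the three-dimensional space $\mathbb P_1(\widehat K)$. On that space, $\widehat p\mapsto\Norm{\widehat p}_{L^2(\widehat K)}$ and $\widehat p\mapsto(\sum_j\abs{\widehat p(\widehat a_j)}^2)^{1/2}$ are both norms; the second is a norm because an affine function vanishing at three non-collinear points is zero. All norms on a finite-dimensional space are equivalent, which yields constants $\widehat c,\widehat C$ depending only on $\widehat K$. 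These constants are therefore independent of $h$, and shape regularity is not even needed.

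The same constants can also be written out explicitly. The element mass matrix is $\frac{\abs{K}}{12}(I+\mathbf 1\mathbf 1^\top)$, with eigenvalues $\abs{K}/12$ (multiplicity two) and $\abs{K}/3$. The lumped matrix is $\frac{\abs{K}}{3}I$. Comparing the two gives
\[
  \Norm{p}_{L^2(K)}^2\leq\frac{\abs{K}}{3}\sum_j\abs{p(a_j)}^2\leq 4\Norm{p}_{L^2(K)}^2,
\]
so one may take $c=1$ and $C=4$.

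No step is a genuine obstacle. The only point needing care is checking that the nodal sum is a norm on $\mathbb P_1$ after pulling back, and the explicit mass-matrix computation settles that directly. Summing the element inequalities over $K\in\Th$ gives \eqref{eq:lumped-L2-equivalence} for all $v_h\in\Vht$.
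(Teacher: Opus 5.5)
Your proposal is correct and follows the same route as the paper: reduce element by element to the reference triangle, use equivalence of norms on $\mathbb P_1(\widehat K)$, then sum over the mesh. Your explicit comparison of the element mass matrix $\frac{|K|}{12}(I+\mathbf 1\mathbf 1^\top)$ with the lumped matrix $\frac{|K|}{3}I$ additionally gives $c=1$, $C=4$, and you are right that the paper's appeal to shape regularity is unnecessary, because both quantities scale exactly by $|K|/|\widehat K|$.
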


\begin{proof}
The corresponding norms are equivalent on the reference triangle.
Scaling to each element and summing proves the result, with uniform
constants provided by shape regularity.
\end{proof}

\begin{lemma}[Mass-lumping error]
\label{lem:mass-lumping-error}
For $p_h,r_h\in\Vht$,
\begin{equation}
  \abs{
    \sum_{i\in\Nh}
    \omega_i
    p_h(x_i)r_h(x_i)
    -
    \int_{\Omega_h}
    p_hr_h\dx
  }
  \leq
  Ch
  \Norm{p_h}_{L^\infty(\Omega_h)}
  \Norm{\nabla r_h}_{L^2(\Omega_h)}.
  \label{eq:mass-lumping-error}
\end{equation}
\end{lemma}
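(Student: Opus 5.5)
The argument is element-local: on each triangle the lumped form is an exact quadrature for affine functions, so its error on $p_hr_h$ involves only the variation of $r_h$ across the element, and summing with Cauchy--Schwarz gives the bound.

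\textbf{Step 1: element-wise identity.} The lumped weights satisfy
\[
  \omega_i=\sum_{K\ni x_i}\int_K\varphi_i\dx=\sum_{K\ni x_i}\frac{\abs K}{3}.
\]
Hence
\[
  \sum_i\omega_ip_h(x_i)r_h(x_i)=\sum_{K}\frac{\abs K}{3}\sum_{j=1}^3 p_h(a_j^K)r_h(a_j^K),
\]
where $a_j^K$ are the vertices of $K$. This is the vertex quadrature rule on each $K$, exact for $\mathbb P_1(K)$.

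\textbf{Step 2: isolate the variation of $r_h$.} Let $\bar r_K$ be the mean of $r_h$ over $K$. Since $p_h\bar r_K$ is affine on $K$, the quadrature error of this product vanishes. The local error therefore reduces to the quadrature error of $p_h(r_h-\bar r_K)$, namely
\[
  \frac{\abs K}{3}\sum_j p_h(a_j)\bigl(r_h(a_j)-\bar r_K\bigr)-\int_K p_h(r_h-\bar r_K)\dx.
\]

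\textbf{Step 3: local bound.} $r_h$ is affine on $K$, so $\abs{r_h(y)-\bar r_K}\leq h_K\abs{\nabla r_h|_K}$ for every $y\in K$. Both terms of the local error are then bounded by a constant times
\[
  \abs K\,\Norm{p_h}_{L^\infty(K)}\,h_K\abs{\nabla r_h|_K}.
\]
Because $\nabla r_h$ is constant on $K$, $\abs K\abs{\nabla r_h|_K}=\abs K^{1/2}\Norm{\nabla r_h}_{L^2(K)}$. The local error is thus at most
\[
  Ch\,\Norm{p_h}_{L^\infty(\Omega_h)}\abs K^{1/2}\Norm{\nabla r_h}_{L^2(K)}.
\]

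\textbf{Step 4: summation.} Summing over $K$ and applying the discrete Cauchy--Schwarz inequality gives
\[
  \sum_K\abs K^{1/2}\Norm{\nabla r_h}_{L^2(K)}\leq\abs{\Omega_h}^{1/2}\Norm{\nabla r_h}_{L^2(\Omega_h)}.
\]
Since $\abs{\Omega_h}\leq\abs\Omega$, the constant is uniform in $h$. Shape regularity is not needed here; the only delicate point is the $\abs K^{1/2}$ bookkeeping in Step 3.
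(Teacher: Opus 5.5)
Your proof is correct and follows essentially the paper's route. Both arguments use the element-local vertex quadrature and reduce its error to the oscillation of $r_h$ on $K$. You do this by subtracting the element mean and using exactness on $\mathbb P_1(K)$, whereas the paper uses the pointwise identity $I_K(p_hr_h)-p_hr_h=\sum_{j}\lambda_j\,p_h(x_j)\bigl(r_h(x_j)-r_h\bigr)$. Both then bound the oscillation by $h_K\abs{\nabla r_h|_K}$ and sum with Cauchy--Schwarz. Your remark that shape regularity is not needed is accurate for the paper's argument as well.
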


\begin{proof}
On $K\in\Th$, the local quadrature error is $\int_K \bigl(
I_K(p_hr_h)-p_hr_h \bigr) \dx,$ where
\[
  I_K(p_hr_h)(x)-p_h(x)r_h(x) = \sum_{j=1}^3 \lambda_j(x)p_h(x_j)
  \bigl( r_h(x_j)-r_h(x) \bigr).
\]
Since $r_h$ is affine, $\abs{r_h(x_j)-r_h(x)}\leq
Ch_K\abs{\nabla r_h|_K}.$
Thus
\[
  \abs{
    \int_K
    \bigl(
      I_K(p_hr_h)-p_hr_h
    \bigr)
    \dx
  }
  \leq
  Ch_K
  \Norm{p_h}_{L^\infty(K)}
  \abs{K}
  \abs{\nabla r_h|_K}.
\]
Summation and Cauchy--Schwarz give
\eqref{eq:mass-lumping-error}.
\end{proof}

\begin{lemma}[Discrete comparison principle]
\label{lem:discrete-comparison}
If $v_h,w_h\in\Vh$ satisfy
\begin{equation}
  a_h(v_h,\varphi_i)
  \leq
  a_h(w_h,\varphi_i)
  \qquad
  \text{for every }i\in\Nhi,
  \label{eq:comparison-assumption}
\end{equation}
then
\begin{equation}
  v_h(x_i)
  \leq
  w_h(x_i)
  \qquad
  \text{for every }i\in\Nhi.
  \label{eq:comparison-conclusion}
\end{equation}
\end{lemma}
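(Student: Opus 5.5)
The plan is to reduce the statement to the inverse-positivity of the interior stiffness matrix $A_h$ guaranteed by Assumption~\ref{ass:DMP}. Set $e_h:=w_h-v_h\in\Vh$. Since every function in $\Vh$ vanishes at the boundary vertices, $e_h=\sum_{j\in\Nhi}e_j\varphi_j$ with $e_j:=e_h(x_j)$. Write $\mathbf e:=(e_j)_{j\in\Nhi}$. By bilinearity,
\[
  (A_h\mathbf e)_i
  =
  \sum_{j\in\Nhi}a_h(\varphi_j,\varphi_i)e_j
  =
  a_h(e_h,\varphi_i)
  =
  a_h(w_h,\varphi_i)-a_h(v_h,\varphi_i)
  \geq0
\]
for every $i\in\Nhi$, using \eqref{eq:comparison-assumption}. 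The hypothesis is therefore exactly the componentwise inequality $A_h\mathbf e\geq0$.

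Next, I invoke the standard characterisation of nonsingular $M$-matrices: $A_h$ is invertible with $A_h^{-1}\geq0$ entrywise. Irreducibility even gives $A_h^{-1}>0$, but nonnegativity is all that is needed here. Hence $\mathbf e=A_h^{-1}(A_h\mathbf e)\geq0$ componentwise, which is \eqref{eq:comparison-conclusion}.

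The only point needing care is that the $M$-matrix in Assumption~\ref{ass:DMP} is nonsingular. This is the main (mild) obstacle, and I would dispose of it directly. $A_h$ is the Gram matrix of the Dirichlet form on $\Vh$, and $a_h$ is coercive on $\Vh\subset\hoz(\Omega_h)$ by Poincaré. So $A_h$ is symmetric positive definite, and in particular invertible. A symmetric positive definite $Z$-matrix is a nonsingular $M$-matrix, so $A_h^{-1}\geq0$. If one prefers to avoid citing the $M$-matrix theory, an elementary alternative is available. Suppose $\min_i e_i<0$, attained at $i_0$, and let $J$ be the set of minimising indices. At $i_0$ we have $0\leq(A_h\mathbf e)_{i_0}$, and the off-diagonal entries are nonpositive. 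Irreducibility and the diagonal dominance coming from the row sums $\sum_j a_h(\varphi_j,\varphi_i)=a_h(1-\sum_{k\notin\Nhi}\varphi_k,\varphi_i)\ge 0$ then propagate the minimum across the coupling graph. This continues until reaching a row adjacent to a boundary vertex, where the row sum is strictly positive, and that yields a contradiction. I would, however, present the short linear-algebra argument above.
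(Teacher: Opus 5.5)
Your proof is correct and is the paper's argument: in interior nodal coordinates the hypothesis reads $A_h\mathbf e\geq0$, and entrywise nonnegativity of $A_h^{-1}$ gives the conclusion. Your symmetric-positive-definiteness check only makes explicit the nonsingularity that the paper takes directly from Assumption~\ref{ass:DMP}, and you are right to present the linear-algebra version, since your optional maximum-principle sketch would also need nonpositive interior--boundary stiffness couplings for the nonnegative row sums, which Assumption~\ref{ass:DMP} does not provide.
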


\begin{proof}
In interior nodal coordinates, the hypothesis is $A_hV\leq A_hW$.
Since $A_h^{-1}$ is entrywise nonnegative, $V-W = A_h^{-1}(A_hV-A_hW)
\leq0.$
\end{proof}

\subsection{Discrete rearrangement and supergradients}

Set $m_h := \abs{\Omega_h}.$ A rank allocation is a measurable
partition $(0,m_h) = \bigcup_{i\in\Nh}E_i, \qquad \abs{E_i}=\omega_i,$
up to null sets. It induces the nodal source
\begin{equation}
  q_i
  :=
  \frac{1}{\omega_i}
  \int_{E_i}
  G(r) \d r.
  \label{eq:rank-allocation-source}
\end{equation}

\begin{lemma}[Minimum representation of the discrete rearrangement]
\label{lem:discrete-rearrangement-representation}
For every $v_h\in\Vh$,
\begin{equation}
  \mathcal R_h(v_h)
  =
  \min_{(E_i)}
  \sum_{i\in\Nh}
  v_h(x_i)
  \int_{E_i}
  G(\rho) \d\rho.
  \label{eq:discrete-minimum-representation}
\end{equation}
The minimum assigns the lowest ranks to the largest nodal values. Thus
the nodes on a level $c\in\mathcal L_h(v_h)$ collectively receive the
rank interval
\begin{equation}
  \mathcal I_h(c;v_h).
  \label{eq:minimising-level-rank-interval}
\end{equation}
\end{lemma}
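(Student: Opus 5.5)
The proof is a weighted Hardy--Littlewood rearrangement inequality, established by a bathtub (layer-cake) argument in the rank variable. Fix $v_h\in\Vh$ and any rank allocation $(E_i)$ with $\abs{E_i}=\omega_i$. Write $c_1>c_2>\dots>c_K$ for the distinct elements of $\mathcal L_h(v_h)$ and set
\[
  S_k:=\bigcup_{i:\,v_h(x_i)\geq c_k}E_i,
  \qquad
  \abs{S_k}=\sum_{v_h(x_i)\geq c_k}\omega_i=:m_k .
\]
By \eqref{eq:nodal-rank-interval}, $(0,m_k]=\bigcup_{l\leq k}\mathcal I_h(c_l;v_h)$, and $m_K=m_h$.

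The first step is a summation by parts. With the convention $c_{K+1}:=0$,
\[
  \sum_{i}v_h(x_i)\int_{E_i}G
  =\sum_{k=1}^K c_k\int_{S_k\setminus S_{k-1}}G
  =\sum_{k=1}^{K}(c_k-c_{k+1})\int_{S_k}G ,
\]
where $S_0=\emptyset$ and $S_K=(0,m_h)$ up to null sets. The identical expansion holds for the sorted allocation, in which each $S_k$ is replaced by $(0,m_k]$.

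The key inequality is a bathtub estimate. Since $G$ is nondecreasing, for every measurable $S\subset(0,m_h)$ with $\abs S=m$ we have
\[
  \int_S G\geq\int_0^m G .
\]
This follows by comparing $S\setminus(0,m)$ with $(0,m)\setminus S$. These two sets have equal measure, and $G\geq G(m)$ on the first while $G\leq G(m)$ on the second.

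The difficulty is the sign of the coefficients. The differences $c_k-c_{k+1}$ are positive for $k<K$. The last coefficient is $c_K$, the minimum nodal value, which is $0$ since boundary nodes vanish for $v_h\in\Vh$. Its term is in any case independent of the allocation, because $S_K=(0,m_h)$. Hence all allocation-dependent terms carry nonnegative coefficients, and the bathtub estimate applies term by term to give
\[
  \sum_i v_h(x_i)\int_{E_i}G\;\geq\;\sum_k c_k\int_{\mathcal I_h(c_k;v_h)}G=\mathcal R_h(v_h).
\]
The anticipated obstacle is exactly this coefficient bookkeeping, together with the fact that $v_h$ may take negative values. The summation-by-parts form handles arbitrary signs, since only the differences $c_k-c_{k+1}$ with $k<K$ matter and the top term is fixed.

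It remains to show that the minimum is attained. Choose any allocation that, within each level $c$, partitions the interval $\mathcal I_h(c;v_h)$ into pieces $E_i$ of length $\omega_i$ for the nodes on that level. This is possible by \eqref{eq:nodal-level-mass}. The allocation gives each $S_k=(0,m_k]$, so equality holds, and the value equals \eqref{eq:discrete-rearrangement-functional}.

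This also proves the final assertion, that the nodes on a level collectively receive $\mathcal I_h(c;v_h)$: the largest values get the lowest ranks, and the union over a level is that interval.
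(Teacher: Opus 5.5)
Your argument is correct, but it takes a different route from the paper's proof.

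The paper uses a pairwise exchange argument. If $v_h(x_i)>v_h(x_j)$ and equal-measure pieces $A\subset E_i$, $B\subset E_j$ satisfy $\int_A G>\int_B G$, then swapping them strictly lowers the pairing. Hence every minimising allocation is oppositely ordered, and grouping equal values gives $\mathcal R_h(v_h)$. This is short and describes the structure of all minimisers. However, it tacitly assumes two things: that a minimiser exists, which is not automatic since measurable partitions with prescribed masses do not form an obviously compact class; and that every oppositely ordered allocation has the value $\mathcal R_h(v_h)$.

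Your summation by parts combined with the bathtub inequality $\int_S G\geq\int_0^{\abs S}G$ proves the lower bound for every allocation directly. You then exhibit the sorted allocation attaining it, so existence of the minimum comes for free and no compactness or equality-case analysis is needed. In effect this is the discrete layer-cake proof of Hardy--Littlewood.

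What your route does not give is a characterisation of all minimisers. Since $G\equiv0$ on $(0,\alpha]$, the sorted allocation is in any case not the unique minimiser. So reading the lemma's second sentence as a description of the minimiser you construct is the right interpretation.

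One small slip: when $v_h$ takes negative interior values, the minimum nodal value $c_K$ is $\le 0$, not $0$. As your next sentence says, this does not matter, because the $k=K$ term is independent of the allocation.
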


\begin{proof}
Suppose $v_h(x_i)>v_h(x_j)$, but equal-measure sets $A\subset E_i$ and
$B\subset E_j$ satisfy $\int_A G(\rho) \d\rho > \int_B G(\rho)
\d\rho.$ Exchanging $A$ and $B$ changes the pairing by
\[
  \qp{
    v_h(x_i)-v_h(x_j)
  }
  \qp{
    \int_B G(\rho) \d\rho
    -
    \int_A G(\rho) \d\rho
  }
  <
  0.
\]
Hence every minimising allocation is oppositely ordered to the nodal
values. Grouping equal values gives
\[
  \sum_{c\in\mathcal L_h(v_h)}
  c
  \int_{\mathcal I_h(c;v_h)}
  G(\rho) \d\rho
  =
  \mathcal R_h(v_h).
  \]

\end{proof}

\begin{lemma}[Concavity and continuity of $\mathcal R_h$]
\label{lem:discrete-rearrangement-concavity}
The functional $\mathcal R_h:\Vh\to\reals$ is concave and
\begin{equation}
  \abs{
    \mathcal R_h(v_h)-\mathcal R_h(w_h)
  }
  \leq
  M_\alpha
  \sum_{i\in\Nh}
  \omega_i
  \abs{
    v_h(x_i)-w_h(x_i)
  }
  \label{eq:discrete-rearrangement-Lipschitz}
\end{equation}
for all $v_h,w_h\in\Vh$.
\end{lemma}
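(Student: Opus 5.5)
Both claims follow from the minimum representation in Lemma~\ref{lem:discrete-rearrangement-representation}. For a fixed rank allocation $(E_i)$, the map
\[
  v_h\longmapsto\sum_{i\in\Nh}v_h(x_i)\int_{E_i}G(\rho)\d\rho
\]
is linear on $\Vh$, because nodal evaluation is linear. By \eqref{eq:discrete-minimum-representation}, $\mathcal R_h$ is the pointwise minimum over all allocations of these linear functionals. A pointwise infimum of affine functionals is concave, and this proves the first claim. The minimum is attained for each $v_h$, since the ordered allocation is admissible, so $\mathcal R_h$ is finite everywhere.

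For the Lipschitz estimate \eqref{eq:discrete-rearrangement-Lipschitz}, fix $v_h,w_h\in\Vh$. Let $(E_i)$ be a minimising allocation for $w_h$, for instance the oppositely ordered allocation given by Lemma~\ref{lem:discrete-rearrangement-representation}. Using this allocation as a competitor for $v_h$ gives
\[
  \mathcal R_h(v_h)-\mathcal R_h(w_h)
  \leq
  \sum_{i\in\Nh}\bigl(v_h(x_i)-w_h(x_i)\bigr)\int_{E_i}G(\rho)\d\rho .
\]
Since $0\leq G\leq M_\alpha$ and $\abs{E_i}=\omega_i$, we have $0\leq\int_{E_i}G\d\rho\leq M_\alpha\omega_i$. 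The right-hand side is therefore bounded by $M_\alpha\sum_i\omega_i\abs{v_h(x_i)-w_h(x_i)}$. Exchanging the roles of $v_h$ and $w_h$ gives the same bound for $\mathcal R_h(w_h)-\mathcal R_h(v_h)$, and \eqref{eq:discrete-rearrangement-Lipschitz} follows. Continuity on the finite-dimensional space $\Vh$ is immediate from this estimate.

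The same one-sided inequality, taken with the allocation optimal for $v_h$, also yields the supergradient inequality \eqref{eq:canonical-source-inequality}. On each tied level $c$, any optimal allocation assigns the whole interval $\mathcal I_h(c;v_h)$ to the nodes with value $c$. The sum over those nodes then involves only the total integral $\int_{\mathcal I_h(c;v_h)}G\d\rho$, and distributing this integral in proportion to the masses $\omega_i$ reproduces the canonical source $\overline q_{h,i}[v_h]$.

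The only delicate point is that the minimum in \eqref{eq:discrete-minimum-representation} ranges over measurable partitions rather than orderings of the nodes. Concavity needs nothing more than the fact that the admissible set of allocations does not depend on $v_h$. This holds because the constraint is simply $\abs{E_i}=\omega_i$.
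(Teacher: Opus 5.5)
Your proof is correct and follows the paper's argument: concavity comes from \eqref{eq:discrete-minimum-representation} as a pointwise minimum of linear functionals, and the Lipschitz bound comes from using an optimal allocation for $w_h$ as a competitor for $v_h$, bounding $0\leq\int_{E_i}G\,\mathrm{d}\rho\leq M_\alpha\omega_i$, and then exchanging roles. Your extra paragraph on the canonical source is not needed for this lemma, and its claim that \emph{every} optimal allocation assigns exactly $\mathcal I_h(c;v_h)$ to level $c$ is false when $G$ is constant on a stretch (e.g.\ $G=0$ on $[0,\alpha]$); the paper handles that inequality separately in Lemma~\ref{lem:canonical-source-supergradient} through an explicit equal-average allocation built with Lyapunov's theorem.
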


\begin{proof}
The representation
\eqref{eq:discrete-minimum-representation} expresses $\mathcal R_h$ as
the pointwise minimum of linear functionals, so it is concave. If
$(E_i)$ minimises the representation at $w_h$, then
\[
  \mathcal R_h(v_h)-\mathcal R_h(w_h)
  \leq
  \sum_{i\in\Nh}
  \omega_iq_i
  \bigl(
    v_h(x_i)-w_h(x_i)
  \bigr),
\]
where $0\leq q_i\leq M_\alpha$. Interchanging $v_h$ and $w_h$ proves
\eqref{eq:discrete-rearrangement-Lipschitz}.
\end{proof}

\begin{lemma}[Canonical tied-level source]
\label{lem:canonical-source-supergradient}
Let $c\in\mathcal L_h(v_h)$ and set
\[
  I_c
  :=
  \mathcal I_h(c;v_h),
  \qquad
  \omega_c
  :=
  \abs{I_c}.
  \]
  There is a partition of $I_c$ among the nodes satisfying
$v_h(x_i)=c$ such that
\begin{equation}
  \frac{1}{\omega_i}
  \int_{E_i}G(\rho) \d\rho
  =
  \frac{1}{\omega_c}
  \int_{I_c}G(\rho) \d\rho.
  \label{eq:equal-average-allocation}
\end{equation}
Consequently, the canonical source
$\overline q_h[v_h]$ satisfies
\begin{equation}
  \mathcal R_h(w_h)-\mathcal R_h(v_h)
  \leq
  \sum_{i\in\Nh}
  \omega_i
  \overline q_{h,i}[v_h]
  \bigl(
    w_h(x_i)-v_h(x_i)
  \bigr)
  \label{eq:canonical-source-supergradient}
\end{equation}
for every $w_h\in\Vh$.
\end{lemma}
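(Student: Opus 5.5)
We first construct the equal-average partition of each tied interval, and then combine these partitions into a single allocation that attains the minimum in Lemma~\ref{lem:discrete-rearrangement-representation}.

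\textbf{Step 1: the equal-average partition.} Fix a level $c$ and write $I_c=(a,b]$ with $b-a=\omega_c$. Let $i_1,\dots,i_k$ be the nodes with $v_h(x_i)=c$; by \eqref{eq:nodal-level-mass} their weights sum to $\omega_c$. We cannot take consecutive subintervals, since $G$ is nondecreasing and the averages would differ. Instead we use a proportional splitting of the rank interval. Let $\tau:(0,\omega_c]\to I_c$ be the translation $\tau(s)=a+s$. For each $j$ we seek a measurable $E_{i_j}\subset I_c$ of measure $\omega_{i_j}$ on which $G$ has the same distribution, up to scaling, as on $I_c$.

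The cleanest choice is a fine interleaving. Choose $N\in\mathbb N$ and split $I_c$ into $N$ consecutive subintervals of length $\omega_c/N$. Split each of these subintervals consecutively into pieces of lengths $\omega_{i_j}/N$, and let $E_{i_j}^N$ be the union over the $N$ subintervals of the $j$-th piece. Then $\abs{E_{i_j}^N}=\omega_{i_j}$. By the Lipschitz continuity of $G$,
\[
  \abs{\frac{1}{\omega_{i_j}}\int_{E_{i_j}^N}G-\frac{1}{\omega_c}\int_{I_c}G}\leq \operatorname{Lip}(G)\,\frac{\omega_c}{N}.
\]
This gives equality only in the limit, so exact equality needs a different construction.

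For exact equality we use a Lyapunov/continuity argument, applied inductively. Consider the vector measure $A\mapsto(\abs{A},\int_A G)$ on $I_c$. It is nonatomic, so by Lyapunov's convexity theorem there is $E\subset I_c$ with $(\abs{E},\int_E G)=\frac{\omega_{i_1}}{\omega_c}\bigl(\omega_c,\int_{I_c}G\bigr)$. We then recurse on $I_c\setminus E$ with the remaining nodes. An elementary alternative avoids Lyapunov entirely: take $E=(a,a+t\omega_c]\cup(b-(1-t)\omega_c\cdot\lambda,\,b]$-type sets. More simply, use the one-parameter family $E_s=(a+s,\,a+s+\omega_{i_1}]$ with $s\in[0,\omega_c-\omega_{i_1}]$. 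The map $s\mapsto\frac{1}{\omega_{i_1}}\int_{E_s}G$ is continuous. At $s=0$ it is at most the average over $I_c$, and at the right endpoint it is at least that average, because $G$ is nondecreasing. The intermediate value theorem therefore gives an interval with exactly the average. The complement is the union of two intervals on which $G$ still has average equal to the $I_c$ average. We repeat the argument on this complement, a union of at most two intervals, parametrising by a sliding window in the rank coordinate with the gap collapsed. The same monotonicity and intermediate value argument applies, because collapsing the gap preserves monotonicity of $G$ along the concatenated set. By induction on $k$ we obtain \eqref{eq:equal-average-allocation}. We expect this bookkeeping to be the main technical point, and Lyapunov's theorem is the fallback if the collapsing construction becomes awkward.

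\textbf{Step 2: the supergradient inequality.} Doing Step 1 for every level $c\in\mathcal L_h(v_h)$ yields a rank allocation $(E_i)$. It gives each level exactly its interval $I_c$, so it is a minimiser of the representation \eqref{eq:discrete-minimum-representation} at $v_h$, and its induced source \eqref{eq:rank-allocation-source} is $\overline q_h[v_h]$ by \eqref{eq:canonical-nodal-source}. For any $w_h$, the same allocation is admissible in the minimum defining $\mathcal R_h(w_h)$, so
\[
  \mathcal R_h(w_h)\leq\sum_i w_h(x_i)\int_{E_i}G=\sum_i\omega_i\overline q_{h,i}[v_h]w_h(x_i),
\]
while equality holds at $v_h$. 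Subtracting the equality from the inequality gives \eqref{eq:canonical-source-supergradient}.
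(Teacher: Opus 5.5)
Your argument is correct. Step~2 coincides with the paper's proof: the level-wise allocation attains $\mathcal R_h(v_h)$, induces $\overline q_h[v_h]$, and is then used as a competitor at $w_h$. The Lyapunov recursion you sketch is also exactly how the paper proves \eqref{eq:equal-average-allocation}: you split off $E$ with $(|E|,\int_E G)=\frac{\omega_{i_1}}{\omega_c}\bigl(\omega_c,\int_{I_c}G\bigr)$ and recurse on the remainder.

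The genuinely different part is the elementary route you put forward as your main line, and it is sound. Let $\sigma_k$ be the increasing, measure-preserving parametrisation of the current remainder by $(0,W_k]$. Then $G\circ\sigma_k$ is nondecreasing with mean $\bar G:=\omega_c^{-1}\int_{I_c}G$. So the first window of length $\omega_{i_k}$ has average at most $\bar G$ and the last has average at least $\bar G$. The window average depends continuously on the offset because $G$ is bounded. Continuity of $G$ is not what gives this, since $G\circ\sigma_k$ may jump at a collapsed gap. The new remainder again has mean $\bar G$, so the induction closes. After the first step the remainder may consist of more than two intervals, but collapsing handles any finite union.

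Compared with the paper, your route is constructive: each $E_i$ is an explicit finite union of intervals found by one-dimensional root finding, with no appeal to Lyapunov's theorem. However, your endpoint sign change uses that $G$ is nondecreasing. The Lyapunov argument is shorter and needs only $G\in L^\infty$.

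When you write it up, drop the garbled sentence about ``$(a,a+t\omega_c]\cup(b-(1-t)\omega_c\lambda,b]$-type sets'', since those sets do not have measure $\omega_{i_1}$ in general. If you want a two-sided family, use $E_t=(a,a+t]\cup(b-\omega_{i_1}+t,b]$ for $0\le t\le\omega_{i_1}$. It interpolates between the top and bottom windows and leaves a single interval as complement, so no gap-collapsing is needed.
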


\begin{proof}
Let
\[
  \mathcal N_c
  :=
  \qc{
    i\in\Nh:
    v_h(x_i)=c
  }
  =
  \qc{
    i_1,\ldots,i_m
  }.
\]
On $I_c$, define the vector measure
\[
  \nu(E)
  :=
  \qp{
    \abs{E},
    \int_E G(\rho)\d\rho
  },
  \qquad
  E\subset I_c
  \text{ measurable}.
\]
Since Lebesgue measure is nonatomic and $G\in L^\infty(I_c)$, the
vector measure $\nu$ is nonatomic. Hence, by the Lyapunov convexity
theorem, the range of the restriction of $\nu$ to any measurable
subset $A\subset I_c$ is convex. In particular, for every
$\lambda\in[0,1]$ there exists a measurable set $E\subset A$ such that
$\nu(E) = \lambda\nu(A).$

We now construct the required partition recursively. Set $A_1:=I_c$
and $W_k:=\sum_{\ell=k}^m\omega_{i_\ell}$. Since $W_1=\omega_c$, we
have $\nu(A_1)=\nu(I_c)=\frac{W_1}{\omega_c}\nu(I_c)$. Suppose that a
measurable remainder $A_k$ has been constructed with
$\nu(A_k)=\frac{W_k}{\omega_c}\nu(I_c)$.
By the preceding convexity property, there exists
$E_{i_k}\subset A_k$ such that
$\nu(E_{i_k})=\frac{\omega_{i_k}}{W_k}\nu(A_k)=
\frac{\omega_{i_k}}{\omega_c}\nu(I_c)$.
Set $A_{k+1} := A_k\setminus E_{i_k}.$ Then $\nu(A_{k+1}) =
\frac{W_{k+1}}{\omega_c}\nu(I_c),$ so the construction may be
continued. Taking the final remainder as $E_{i_m}$ gives a measurable
partition of $I_c$ satisfying
\[
  \abs{E_i}
  =
  \omega_i,
  \qquad
  \int_{E_i}G(\rho)\d\rho
  =
  \frac{\omega_i}{\omega_c}
  \int_{I_c}G(\rho)\d\rho.
\]
Dividing the second identity by $\omega_i$ proves
\eqref{eq:equal-average-allocation}.

Applying this construction independently on every tied nodal level
gives a minimising rank allocation for which $\mathcal R_h(v_h) =
\sum_{i\in\Nh} \omega_i \overline q_{h,i}[v_h]v_h(x_i).$ Using the
same allocation as a competitor in the minimum representation of
$\mathcal R_h(w_h)$ yields
\[
  \mathcal R_h(w_h)-\mathcal R_h(v_h)
  \leq
  \sum_{i\in\Nh}
  \omega_i
  \overline q_{h,i}[v_h]
  \qp{
    w_h(x_i)-v_h(x_i)
  },
\]
which is \eqref{eq:canonical-source-supergradient}.
\end{proof}

\begin{lemma}[Finite-dimensional supergradient representation]
\label{lem:discrete-supergradient-representation}
A functional $\ell_h\in\Vh^\ast$ satisfies
\begin{equation}
  \mathcal R_h(w_h)-\mathcal R_h(v_h)
  \leq
  \ell_h(w_h-v_h)
  \qquad
  \text{for every }w_h\in\Vh
  \label{eq:abstract-discrete-supergradient}
\end{equation}
if and only if there is an admissible rearrangement source
$q_h=(q_{h,i})_{i\in\Nh}$ at $v_h$ such that
\begin{equation}
  \ell_h(w_h)
  =
  \sum_{i\in\Nh}
  \omega_iq_{h,i}w_h(x_i)
  \qquad
  \text{for every }w_h\in\Vh.
  \label{eq:represented-discrete-supergradient}
\end{equation}
In particular, $0\leq q_{h,i}\leq M_\alpha$.
\end{lemma}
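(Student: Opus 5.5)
The plan is to prove the two directions separately. The reverse direction is immediate from the definitions. For the forward direction, I read off nodal coefficients by testing against the interior basis functions, and then obtain the bounds $0\leq q_{h,i}\leq M_\alpha$ from monotonicity and the Lipschitz estimate of Lemma~\ref{lem:discrete-rearrangement-concavity}. The main care point is the boundary nodes: functions in $\Vh$ vanish there, so $\ell_h$ carries no information about those components and they must be assigned by hand.

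\emph{The ``if'' direction.} Suppose $q_h$ is an admissible rearrangement source at $v_h$ and $\ell_h$ is given by \eqref{eq:represented-discrete-supergradient}. Then \eqref{eq:abstract-discrete-supergradient} is exactly the admissibility inequality \eqref{eq:admissible-source-inequality}, so there is nothing further to prove.

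\emph{The ``only if'' direction.} Let $\ell_h\in\Vh^\ast$ satisfy \eqref{eq:abstract-discrete-supergradient}. The functions $\{\varphi_i\}_{i\in\Nhi}$ form a basis of $\Vh$, and every $w_h\in\Vh$ can be written as $w_h=\sum_{i\in\Nhi}w_h(x_i)\varphi_i$.
\begin{itemize}
\item For interior nodes $i\in\Nhi$, define $q_{h,i}:=\ell_h(\varphi_i)/\omega_i$, which makes sense because $\omega_i>0$.
\item For boundary nodes $i\in\Nh\setminus\Nhi$, set $q_{h,i}:=\overline q_{h,i}[v_h]$. Any value in $[0,M_\alpha]$ would do, since $w_h(x_i)=0$ at these nodes.
\end{itemize}
With these choices, linearity gives \eqref{eq:represented-discrete-supergradient} for every $w_h\in\Vh$. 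Once the bounds below are established, the inequality \eqref{eq:abstract-discrete-supergradient} becomes precisely \eqref{eq:admissible-source-inequality}, so $q_h$ is admissible.

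\emph{The bounds on interior nodes.} This is the only substantive step. Fix $i\in\Nhi$ and $t>0$, and test \eqref{eq:abstract-discrete-supergradient} with $w_h=v_h\pm t\varphi_i$.

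For the lower bound, I use that $\mathcal R_h$ is nondecreasing in each nodal value. This follows from the minimum representation of Lemma~\ref{lem:discrete-rearrangement-representation}. For each fixed allocation $(E_j)$, the map $v_h\mapsto\sum_j v_h(x_j)\int_{E_j}G$ is nondecreasing in $v_h(x_i)$ because $G\geq0$. A pointwise minimum of nondecreasing functions is nondecreasing. Hence
\[
  0\leq\mathcal R_h(v_h+t\varphi_i)-\mathcal R_h(v_h)\leq t\,\omega_iq_{h,i},
\]
which gives $q_{h,i}\geq0$.

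For the upper bound, I apply \eqref{eq:discrete-rearrangement-Lipschitz} with $w_h=v_h-t\varphi_i$. The two functions differ only at node $x_i$, by $t$, so
\[
  -tM_\alpha\omega_i\leq\mathcal R_h(v_h-t\varphi_i)-\mathcal R_h(v_h)\leq -t\,\omega_iq_{h,i}.
\]
Dividing by $-t\omega_i$ gives $q_{h,i}\leq M_\alpha$. Together with the choice made at the boundary nodes, this establishes \eqref{eq:admissible-source-bound} for all $i\in\Nh$ and completes the proof.
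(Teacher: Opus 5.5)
Your proof is correct, but it takes a different route from the paper.

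The paper writes $\mathcal R_h=\min_\pi L_\pi$ as a finite minimum of linear functionals indexed by permutations of the nodes. It invokes the standard fact that the superdifferential of such a minimum is the convex hull of the active $L_\pi$. Each $q_{h,i}$ is then a convex combination of permutation sources $q_{\pi,i}$. Each $q_{\pi,i}$ is an average of $G$ over an interval, so the bounds $0\leq q_{h,i}\leq M_\alpha$ are inherited directly from $0\leq G\leq M_\alpha$, and the boundary coefficients come along automatically.

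You instead read off the interior coefficients as $\ell_h(\varphi_i)/\omega_i$ and assign the boundary coefficients by hand, correctly noting that $\ell_h$ cannot see them on $\Vh$. You then get the bounds from one-sided tests $w_h=v_h\pm t\varphi_i$. These use only two facts: $\mathcal R_h$ is nondecreasing in each nodal value (from $G\geq0$), and $\mathcal R_h$ is $M_\alpha$-Lipschitz in the weighted $\ell^1$ norm.

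What your argument buys:
\begin{itemize}
\item It is more elementary and self-contained, needing nothing beyond Lemmas~\ref{lem:discrete-rearrangement-representation} and~\ref{lem:discrete-rearrangement-concavity}.
\item It avoids the convex-hull formula for the superdifferential of a finite minimum, which the paper uses without proof.
\item It shows the bounds hold for any concave functional that is coordinatewise nondecreasing and $M_\alpha$-Lipschitz in this norm.
\end{itemize}

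What the paper's route buys in exchange is a complete description of $\partial^+\mathcal R_h(v_h)$ as the convex hull of the sources generated by orderings compatible with $v_h$. This carries more structural information. For instance, at an untied node every supergradient must have coefficient $\overline q_{h,i}[v_h]$. None of this extra structure is needed for the lemma as stated.
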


\begin{proof}
Let $\mathfrak S_h$ be the set of permutations of $\Nh$. For
$\pi\in\mathfrak S_h$, define $r_{\pi,0}:=0$ and
$r_{\pi,k}:=\sum_{\ell=1}^k\omega_{\pi(\ell)}$
and
\begin{equation}
  q_{\pi,\pi(k)}
  :=
  \frac{1}{\omega_{\pi(k)}}
  \int_{r_{\pi,k-1}}^{r_{\pi,k}}
  G(\rho) \d\rho.
  \label{eq:permutation-source}
\end{equation}
With $L_\pi(w_h):=\sum_{i\in\Nh}\omega_iq_{\pi,i}w_h(x_i)$,
Lemma~\ref{lem:discrete-rearrangement-representation} gives
\begin{equation}
  \mathcal R_h(w_h)
  =
  \min_{\pi\in\mathfrak S_h}
  L_\pi(w_h).
  \label{eq:finite-permutation-representation}
\end{equation}
If $\mathfrak S_h(v_h):=\qc{\pi:L_\pi(v_h)=\mathcal R_h(v_h)}$,
then the superdifferential of $\mathcal R_h$ at $v_h$ is
\begin{equation}
  \partial^+\mathcal R_h(v_h)
  =
  \operatorname{co}
  \qc{
    L_\pi:
    \pi\in\mathfrak S_h(v_h)
  },
  \label{eq:finite-superdifferential}
\end{equation}
where $\operatorname{co}$ denotes the convex hull. Hence
$\ell_h=\sum_{\pi\in\mathfrak S_h(v_h)}\lambda_\pi L_\pi$, where
$\lambda_\pi\geq0$ and $\sum_\pi\lambda_\pi=1$. The coefficients
$q_{h,i}:=\sum_{\pi}\lambda_\pi q_{\pi,i}$ satisfy
$0\leq q_{h,i}\leq M_\alpha$
and
\[
  \ell_h(w_h)
  =
  \sum_{i\in\Nh}
  \omega_iq_{h,i}w_h(x_i)
  \qquad
  \text{for every }w_h\in\Vh.
\]
Since $\ell_h$ is a supergradient at $v_h$, the vector $q_h$ therefore
satisfies
\eqref{eq:admissible-source-inequality} and is an admissible
rearrangement source.

Conversely, suppose that $q_h$ is an admissible rearrangement source
at $v_h$, and define
\[
  \ell_h(w_h)
  :=
  \sum_{i\in\Nh}
  \omega_iq_{h,i}w_h(x_i).
\]
Then \eqref{eq:admissible-source-inequality} is precisely
\[
  \mathcal R_h(w_h)-\mathcal R_h(v_h)
  \leq
  \ell_h(w_h-v_h)
  \qquad
  \text{for every }w_h\in\Vh.
\]
Hence $\ell_h\in\partial^+\mathcal R_h(v_h)$, completing the
equivalence.
\end{proof}

\subsection{Discrete well-posedness and iteration}

\begin{lemma}[Coercivity of the discrete energy]
\label{lem:discrete-energy-coercivity}
There is a constant $C>0$, independent of $h$ and $v_h$, such that
\begin{equation}
  \mathcal J_h(v_h)
  \geq
  \frac12
  \Norm{\nabla v_h}_{L^2(\Omega_h)}^2
  -
  C
  \Norm{\nabla v_h}_{L^2(\Omega_h)}.
  \label{eq:discrete-energy-coercivity}
\end{equation}
\end{lemma}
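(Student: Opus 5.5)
The plan is to bound $\mathcal R_h(v_h)$ from above by a multiple of $\Norm{\nabla v_h}_{L^2(\Omega_h)}$ and then subtract. Since $G(0)=0$, the zero function has $\mathcal R_h(0)=0$. Lemma~\ref{lem:discrete-rearrangement-concavity} with $w_h=0$ therefore gives
\[
  \mathcal R_h(v_h)
  \leq
  M_\alpha
  \sum_{i\in\Nh}
  \omega_i\abs{v_h(x_i)}.
\]
(Alternatively, one can read this off directly from \eqref{eq:discrete-rearrangement-functional}, since $0\leq G\leq M_\alpha$ and the rank intervals partition $(0,m_h]$.)

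Next I would apply Cauchy--Schwarz with respect to the weights $\omega_i$. Using $\sum_i\omega_i=\abs{\Omega_h}\leq\abs{\Omega}$ from \eqref{eq:nodal-weight-properties}, this gives
\[
  \sum_i\omega_i\abs{v_h(x_i)}
  \leq
  \abs{\Omega}^{1/2}
  \Bigl(\sum_i\omega_i\abs{v_h(x_i)}^2\Bigr)^{1/2}.
\]
The upper inequality in Lemma~\ref{lem:lumped-norm-equivalence} then bounds the right-hand side by $C^{1/2}\abs{\Omega}^{1/2}\Norm{v_h}_{L^2(\Omega_h)}$, and the constant here is uniform in $h$ by shape regularity.

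The final step is a Poincaré inequality with an $h$-independent constant. I expect this to be the only point needing care, because the domains $\Omega_h$ vary with $h$. I would handle it through the zero extension \eqref{eq:zero-extension-norms}: since $\mathcal E_hv_h\in\hoz(\Omega)$, the Poincaré inequality on the fixed domain $\Omega$ gives
\[
  \Norm{v_h}_{L^2(\Omega_h)}
  =
  \Norm{\mathcal E_hv_h}_{L^2(\Omega)}
  \leq
  C_P\Norm{\nabla\mathcal E_hv_h}_{L^2(\Omega)}
  =
  C_P\Norm{\nabla v_h}_{L^2(\Omega_h)}.
\]
Combining the three bounds yields $\mathcal R_h(v_h)\leq C\Norm{\nabla v_h}_{L^2(\Omega_h)}$ with $C=M_\alpha\abs{\Omega}^{1/2}C^{1/2}C_P$. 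Since $\frac12a_h(v_h,v_h)=\frac12\Norm{\nabla v_h}^2_{L^2(\Omega_h)}$, subtracting this bound from the quadratic term gives \eqref{eq:discrete-energy-coercivity}.
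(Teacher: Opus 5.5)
Your proof is correct and follows the paper's argument: the Lipschitz bound of Lemma~\ref{lem:discrete-rearrangement-concavity} at $w_h=0$, the lumped norm equivalence, and the Poincar\'e inequality on the fixed domain $\Omega$ applied to the zero extension, which is what keeps the constant independent of $h$. A minor point: $\mathcal R_h(0)=0$ holds because the rearrangement of the zero function vanishes identically, not because $G(0)=0$.
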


\begin{proof}
By Lemmas~\ref{lem:discrete-rearrangement-concavity}
and~\ref{lem:lumped-norm-equivalence},
\[
  \abs{\mathcal R_h(v_h)}
  \leq
  M_\alpha
  \sum_{i\in\Nh}
  \omega_i\abs{v_h(x_i)}
  \leq
  C\Norm{v_h}_{L^2(\Omega_h)}.
\]
The Poincaré inequality for the zero extension to $\Omega$ gives
$\abs{\mathcal R_h(v_h)}\leq C\Norm{\nabla v_h}_{L^2(\Omega_h)}$,
which proves the claim.
\end{proof}

\begin{lemma}[Discrete existence, uniqueness and optimality]
\label{lem:discrete-well-posedness}
The functional $\mathcal J_h$ has a unique minimiser $\uh\in\Vh$.
Moreover, the following are equivalent:
\begin{enumerate}
  \item
  $\uh$ minimises $\mathcal J_h$;

  \item
  \begin{equation}
    a_h(\uh,v_h-\uh)
    \geq
    \mathcal R_h(v_h)-\mathcal R_h(\uh)
    \qquad
    \text{for every }v_h\in\Vh;
    \label{eq:technical-discrete-VI}
  \end{equation}

  \item
  there is an admissible source
  $q_h^\ast=(q_{h,i}^\ast)_{i\in\Nh}$ at $\uh$ such that
  \begin{equation}
    a_h(\uh,v_h)
    =
    \sum_{i\in\Nh}
    \omega_iq_{h,i}^\ast v_h(x_i)
    \qquad
    \text{for every }v_h\in\Vh.
    \label{eq:technical-discrete-source-equation}
  \end{equation}
\end{enumerate}
\end{lemma}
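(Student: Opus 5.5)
The plan is to treat $\mathcal J_h$ as the sum of a strictly convex quadratic and the convex, finite, Lipschitz function $-\mathcal R_h$ on the finite-dimensional space $\Vh$. Existence, uniqueness and the optimality characterisations then follow from standard convex analysis together with Lemma~\ref{lem:discrete-supergradient-representation}.

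\emph{Existence and uniqueness.} $\Vh$ is finite dimensional and $\mathcal J_h$ is continuous, since $\mathcal R_h$ is Lipschitz by Lemma~\ref{lem:discrete-rearrangement-concavity}. By Lemma~\ref{lem:discrete-energy-coercivity}, $\mathcal J_h(v_h)\to\infty$ as $\Norm{\nabla v_h}_{L^2(\Omega_h)}\to\infty$, and this seminorm is a norm on $\Vh$. Hence a sublevel set of $\mathcal J_h$ is closed and bounded, so compact, and a minimiser exists. Uniqueness follows because $v_h\mapsto\frac12 a_h(v_h,v_h)$ is strictly convex on $\Vh$ ($a_h$ is an inner product there) and $-\mathcal R_h$ is convex by Lemma~\ref{lem:discrete-rearrangement-concavity}. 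The sum is therefore strictly convex, and two distinct minimisers would give a strictly smaller value at their midpoint.

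\emph{(1)$\Leftrightarrow$(2).} Suppose $\uh$ minimises $\mathcal J_h$. Compare with $\uh+t(v_h-\uh)$ for $t\in(0,1]$. Concavity gives
\[
\mathcal R_h(\uh+t(v_h-\uh))\geq \mathcal R_h(\uh)+t\qp{\mathcal R_h(v_h)-\mathcal R_h(\uh)}.
\]
Hence
\[
0\leq \mathcal J_h(\uh+t(v_h-\uh))-\mathcal J_h(\uh)\leq t\,a_h(\uh,v_h-\uh)+\tfrac{t^2}{2}a_h(v_h-\uh,v_h-\uh)-t\qp{\mathcal R_h(v_h)-\mathcal R_h(\uh)}.
\]
Dividing by $t$ and letting $t\downarrow0$ yields \eqref{eq:technical-discrete-VI}. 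Conversely, assume \eqref{eq:technical-discrete-VI}. The quadratic identity
\[
\tfrac12 a_h(v_h,v_h)-\tfrac12 a_h(\uh,\uh)=a_h(\uh,v_h-\uh)+\tfrac12 a_h(v_h-\uh,v_h-\uh)
\]
then gives $\mathcal J_h(v_h)-\mathcal J_h(\uh)\geq\frac12 a_h(v_h-\uh,v_h-\uh)\geq0$.

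\emph{(2)$\Leftrightarrow$(3).} Set $\ell_h(w_h):=a_h(\uh,w_h)$, a linear functional on $\Vh$. Condition \eqref{eq:technical-discrete-VI} says exactly that
\[
\mathcal R_h(v_h)-\mathcal R_h(\uh)\leq \ell_h(v_h-\uh)\quad\text{for every } v_h\in\Vh,
\]
that is, $\ell_h\in\partial^+\mathcal R_h(\uh)$. Lemma~\ref{lem:discrete-supergradient-representation} states that this holds if and only if $\ell_h(w_h)=\sum_i\omega_iq^\ast_{h,i}w_h(x_i)$ for some admissible source $q_h^\ast$ at $\uh$, which is \eqref{eq:technical-discrete-source-equation}. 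Running the argument in reverse, an admissible $q_h^\ast$ satisfying \eqref{eq:technical-discrete-source-equation} makes $\ell_h$ a supergradient by \eqref{eq:admissible-source-inequality}, which gives (2).

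\emph{Main obstacle.} The only delicate point is hidden in Lemma~\ref{lem:discrete-supergradient-representation}: one needs the superdifferential of a minimum of finitely many linear maps to be the convex hull of the active ones. That lemma is available, so nothing beyond it is required. One should also make sure that the representing vector is defined on all of $\Nh$ even though test functions vanish at boundary nodes. This causes no difficulty, since the lemma already supplies $q_h^\ast$ with the required bounds.
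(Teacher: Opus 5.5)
Your proof is correct. Existence via coercivity and compactness, uniqueness via strict convexity, $(3)\Rightarrow(2)$ via admissibility and $(2)\Rightarrow(1)$ via expanding the quadratic all coincide with the paper's argument. The only genuine difference is how you get from minimality to an admissible source.

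The paper proves the cycle $(1)\Rightarrow(3)\Rightarrow(2)\Rightarrow(1)$. For $(1)\Rightarrow(3)$ it invokes the finite-dimensional convex optimality condition, which is implicitly the subdifferential sum rule $0\in a_h(u_h,\cdot)-\partial^+\mathcal R_h(u_h)$. This produces a supergradient $\ell_h$ with $\ell_h=a_h(u_h,\cdot)$, and Lemma~\ref{lem:discrete-supergradient-representation} is then applied to $\ell_h$.

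You instead prove $(1)\Rightarrow(2)$ directly. You take a one-sided difference quotient along $u_h+t(v_h-u_h)$ and use only concavity of $\mathcal R_h$. You then observe that $(2)$ is literally the statement $a_h(u_h,\cdot)\in\partial^+\mathcal R_h(u_h)$, so $(2)\Leftrightarrow(3)$ is just Lemma~\ref{lem:discrete-supergradient-representation} read in both directions.

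Your route is more self-contained, since it needs no subdifferential calculus beyond what is already packaged inside the representation lemma. It also makes transparent that the variational inequality and the source equation are one and the same condition. The paper's route is shorter if one takes standard convex analysis for granted. In both versions the real content, the existence of a bounded nodal source defined on all of $\Nh$, sits entirely in the representation lemma.
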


\begin{proof}
The functional $-\mathcal R_h$ is convex and continuous, while
$v_h\longmapsto\frac12a_h(v_h,v_h)$ is strictly convex on $\Vh$.
Hence $\mathcal J_h$ is strictly convex.
By Lemma~\ref{lem:discrete-energy-coercivity}, it is also coercive.
Since $\Vh$ is finite dimensional, $\mathcal J_h$ therefore has a
unique minimiser.

We prove the equivalence in the order
$(1)\Longrightarrow(3)\Longrightarrow(2)\Longrightarrow(1)$.

Suppose first that $\uh$ minimises $\mathcal J_h$. The
finite-dimensional convex optimality condition gives a supergradient
$\ell_h\in\partial^+\mathcal R_h(\uh)$ satisfying
\begin{equation}
  a_h(\uh,v_h)
  =
  \ell_h(v_h)
  \qquad
  \text{for every }v_h\in\Vh.
  \label{eq:technical-optimality-balance}
\end{equation}
By Lemma~\ref{lem:discrete-supergradient-representation}, there is an
admissible source $q_h^\ast$ at $\uh$ representing $\ell_h$. Hence
\[
  a_h(\uh,v_h)
  =
  \sum_{i\in\Nh}
  \omega_iq_{h,i}^\ast v_h(x_i)
  \qquad
  \text{for every }v_h\in\Vh,
\]
which proves $(1)\Rightarrow(3)$.

Now suppose that $(3)$ holds. Since $q_h^\ast$ is admissible at $\uh$,
\[
  \mathcal R_h(v_h)-\mathcal R_h(\uh)
  \leq
  \sum_{i\in\Nh}
  \omega_iq_{h,i}^\ast
  \qp{
    v_h(x_i)-\uh(x_i)
  }.
\]
Using \eqref{eq:technical-discrete-source-equation} with
$v_h-\uh$ therefore gives $\mathcal R_h(v_h)-\mathcal R_h(\uh)
\leq a_h(\uh,v_h-\uh)$,
which is \eqref{eq:technical-discrete-VI}. Thus
$(3)\Rightarrow(2)$.

Finally, suppose that $(2)$ holds. Expanding the quadratic term gives
\begin{align*}
  \mathcal J_h(v_h)-\mathcal J_h(\uh)
  &=
  a_h(\uh,v_h-\uh)
  +
  \frac12
  a_h(v_h-\uh,v_h-\uh)
  \\
  &\qquad
  -
  \qp{
    \mathcal R_h(v_h)-\mathcal R_h(\uh)
  }.
\end{align*}
By \eqref{eq:technical-discrete-VI},
\[
  \mathcal J_h(v_h)-\mathcal J_h(\uh)
  \geq
  \frac12
  a_h(v_h-\uh,v_h-\uh)
  \geq
  0.
\]
Hence $\uh$ minimises $\mathcal J_h$, proving $(2)\Rightarrow(1)$.
The same inequality also recovers uniqueness from strict convexity.
\end{proof}

\begin{lemma}[Discrete barrier bound]
\label{lem:discrete-barrier-bound}
The discrete minimiser satisfies
\begin{equation}
  \uh\in K_h.
  \label{eq:technical-discrete-invariant-bound}
\end{equation}
\end{lemma}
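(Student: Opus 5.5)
We would argue directly from the optimality characterisation in Lemma~\ref{lem:discrete-well-posedness}(3), combined with the comparison principle of Lemma~\ref{lem:discrete-comparison}. By that lemma there is an admissible source $q_h^\ast$ at $\uh$ with $a_h(\uh,v_h)=\sum_{i\in\Nh}\omega_iq_{h,i}^\ast v_h(x_i)$ for every $v_h\in\Vh$. By Lemma~\ref{lem:discrete-supergradient-representation}, or directly from \eqref{eq:admissible-source-bound}, we know $0\leq q_{h,i}^\ast\leq M_\alpha$ for all $i$. Both bounds then follow by testing with interior hat functions $\varphi_i$, $i\in\Nhi$, which belong to $\Vh$.

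For the lower bound we compare $\uh$ with the zero function. For $i\in\Nhi$,
\[
  a_h(0,\varphi_i)=0\leq \omega_iq_{h,i}^\ast=a_h(\uh,\varphi_i),
\]
since $\varphi_i(x_j)=\delta_{ij}$. Lemma~\ref{lem:discrete-comparison} with $v_h=0$ and $w_h=\uh$ gives $\uh(x_i)\geq0$ at every interior node. For the upper bound we test the barrier equation \eqref{eq:discrete-barrier} with $\varphi_i$, which gives $a_h(z_h,\varphi_i)=M_\alpha\int_{\Omega_h}\varphi_i\dx=M_\alpha\omega_i$. Hence
\[
  a_h(\uh,\varphi_i)=\omega_iq_{h,i}^\ast\leq M_\alpha\omega_i=a_h(z_h,\varphi_i),
  \qquad i\in\Nhi,
\]
and Lemma~\ref{lem:discrete-comparison} yields $\uh(x_i)\leq z_h(x_i)$. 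Boundary nodal values vanish because $\uh\in\Vh$, so the definition \eqref{eq:invariant-box} of $K_h$ is satisfied.

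The only step needing care is the identity $a_h(z_h,\varphi_i)=M_\alpha\omega_i$. It requires the lumped weight $\omega_i=\int_{\Omega_h}\varphi_i\dx$ from \eqref{eq:nodal-measure} to coincide with the consistent load, which holds here by definition. Everything else reduces to the sign information on $q_h^\ast$, and that is already guaranteed by admissibility, so no genuine obstacle is expected. In particular, the choice of admissible source $q_h^\ast$, which may be non-unique on tied levels, plays no role, since only the bounds \eqref{eq:admissible-source-bound} are used.
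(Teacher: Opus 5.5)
Your proposal is correct and follows essentially the same route as the paper: testing the equilibrium source equation with interior hat functions gives $0\leq a_h(\uh,\varphi_i)=\omega_iq_{h,i}^\ast\leq M_\alpha\omega_i=a_h(z_h,\varphi_i)$ for $i\in\Nhi$. The comparison principle is then applied twice, once against $0$ and once against $z_h$. Your remark that the non-uniqueness of $q_h^\ast$ is irrelevant, since only the bounds $0\leq q_{h,i}^\ast\leq M_\alpha$ are used, is accurate.
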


\begin{proof}
For $i\in\Nhi$, $a_h(\uh,\varphi_i)=\omega_iq_{h,i}^\ast$,
where $0\leq q_{h,i}^\ast\leq M_\alpha$. Hence
$0\leq a_h(\uh,\varphi_i)\leq M_\alpha\omega_i=
a_h(z_h,\varphi_i)$.
Lemma~\ref{lem:discrete-comparison} applied twice gives
$0\leq\uh(x_i)\leq z_h(x_i)$ for every $i\in\Nhi$.
\end{proof}

\begin{lemma}[Invariance of the rearrangement iteration]
\label{lem:rearrangement-iteration-invariance}
If $u_h^n\in K_h$, then
$\widehat u_h^{n+1}\in K_h$. Consequently, $u_h^0\in K_h$ implies
$u_h^n\in K_h$ for every $n\geq0$.
\end{lemma}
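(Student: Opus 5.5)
The argument mirrors the proof of Lemma~\ref{lem:discrete-barrier-bound}, with the unknown equilibrium source replaced by the explicit canonical source of the current iterate. The first claim, $\widehat u_h^{n+1}\in K_h$, in fact needs no hypothesis on $u_h^n$ beyond its lying in $\Vh$. Then I will close the convex-combination step by induction.

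\emph{Step 1: bracketing the Poisson step.} By \eqref{eq:canonical-source-bound}, the source $q_{h,i}^n=\overline q_{h,i}[u_h^n]$ from \eqref{eq:iterative-rank-source} satisfies $0\leq q_{h,i}^n\leq M_\alpha$ for every $i\in\Nh$. Fix an interior node $i\in\Nhi$, so that $\varphi_i\in\Vh$, and test \eqref{eq:rearrangement-poisson-step} with $v_h=\varphi_i$. Since $\varphi_i(x_j)=\delta_{ij}$, this gives
\[
  a_h(\widehat u_h^{n+1},\varphi_i)=\omega_iq_{h,i}^n .
\]
Testing the barrier equation \eqref{eq:discrete-barrier} with $\varphi_i$ gives $a_h(z_h,\varphi_i)=M_\alpha\omega_i$. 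Combining these with the source bounds yields
\[
  a_h(0,\varphi_i)=0\leq a_h(\widehat u_h^{n+1},\varphi_i)\leq a_h(z_h,\varphi_i)
  \qquad\text{for every }i\in\Nhi .
\]
Applying Lemma~\ref{lem:discrete-comparison} once with the pair $(0,\widehat u_h^{n+1})$ and once with the pair $(\widehat u_h^{n+1},z_h)$ gives $0\leq\widehat u_h^{n+1}(x_i)\leq z_h(x_i)$ at every interior node. Hence $\widehat u_h^{n+1}\in K_h$.

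\emph{Step 2: convexity and induction.} The set $K_h$ is convex, because it is cut out of $\Vh$ by finitely many linear nodal inequalities. The update \eqref{eq:rearrangement-iteration} is a convex combination of $u_h^n$ and $\widehat u_h^{n+1}$ with weight $\theta_n\in(0,1]$. So if $u_h^n\in K_h$, then $u_h^{n+1}\in K_h$, and induction from $u_h^0\in K_h$ gives $u_h^n\in K_h$ for every $n\geq0$.

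No step here is a real obstacle. The only point needing care is that the comparison principle involves only interior nodes, which is exactly what $K_h$ constrains; boundary nodal values vanish in $\Vh$ anyway.
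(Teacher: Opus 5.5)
Your proposal is correct and matches the paper's proof: the canonical source bounds give $0\leq a_h(\widehat u_h^{n+1},\varphi_i)\leq a_h(z_h,\varphi_i)$ at interior nodes, the discrete comparison principle then yields $\widehat u_h^{n+1}\in K_h$, and convexity of $K_h$ handles the relaxed update. Your side remark is also right: the first claim holds for any $u_h^n\in\Vh$, not only for $u_h^n\in K_h$.
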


\begin{proof}
Since $0\leq q_{h,i}^n\leq M_\alpha$,
$0\leq a_h(\widehat u_h^{n+1},\varphi_i)\leq
a_h(z_h,\varphi_i)$.
The comparison principle gives
$\widehat u_h^{n+1}\in K_h$. The set $K_h$ is convex, so the relaxed
update also belongs to $K_h$.
\end{proof}

\begin{lemma}[Convergence of the rearrangement iteration]
\label{lem:rearrangement-iteration-convergence}
Let $\uh$ be the minimiser of $\mathcal J_h$. If
$u_h^0\in K_h$ and \eqref{eq:step-size-conditions} holds, then
\begin{equation}
  \Norm{
    \nabla(
      u_h^n-\uh
    )
  }_{L^2(\Omega_h)}
  \longrightarrow0.
  \label{eq:technical-iteration-H1-convergence}
\end{equation}
For fixed $h$,
\begin{equation}
  \Norm{
    u_h^n-\uh
  }_{L^\infty(\Omega_h)}
  \longrightarrow0.
  \label{eq:technical-iteration-Linf-convergence}
\end{equation}
\end{lemma}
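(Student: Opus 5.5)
Our plan is to treat \eqref{eq:rearrangement-iteration} as a projected-free subgradient method for the strongly convex functional $\mathcal J_h$ in the energy inner product $a_h(\cdot,\cdot)$ on $\Vh$. By Lemma~\ref{lem:canonical-source-supergradient} and Lemma~\ref{lem:discrete-supergradient-representation}, the functional $\ell^n(v_h):=\sum_i\omega_iq^n_{h,i}v_h(x_i)$ is a supergradient of $\mathcal R_h$ at $u_h^n$. Its Riesz representative in $a_h$ is exactly $\widehat u_h^{n+1}$, so $g^n:=u_h^n-\widehat u_h^{n+1}$ is an $a_h$-subgradient of $\mathcal J_h$ at $u_h^n$, and the update reads $u_h^{n+1}=u_h^n-\theta_ng^n$. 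Writing $e^n:=u_h^n-\uh$ and $\|\cdot\|_a:=a_h(\cdot,\cdot)^{1/2}$, we expand
\[
\|e^{n+1}\|_a^2=\|e^n\|_a^2-2\theta_na_h(g^n,e^n)+\theta_n^2\|g^n\|_a^2 .
\]

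We then bound the two terms. By the subgradient inequality and the $1$-strong convexity of $\mathcal J_h$ in $\|\cdot\|_a$ (the quadratic part is exactly $\frac12\|\cdot\|_a^2$ and $-\mathcal R_h$ is convex),
\[
a_h(g^n,e^n)\geq \mathcal J_h(u_h^n)-\mathcal J_h(\uh)+\tfrac12\|e^n\|_a^2\geq \|e^n\|_a^2 .
\]
The second inequality uses $\mathcal J_h(u_h^n)-\mathcal J_h(\uh)\geq\frac12\|e^n\|_a^2$, which is the final display in the proof of Lemma~\ref{lem:discrete-well-posedness}. For the other term, we use that $u_h^n,\widehat u_h^{n+1}\in K_h$ by Lemma~\ref{lem:rearrangement-iteration-invariance}. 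Hence $\|g^n\|_a\leq\|u_h^n\|_a+\|\widehat u_h^{n+1}\|_a\leq B$, where $B$ bounds the energy norm on $K_h$. This set is bounded for fixed $h$ because the nodal values are boxed between $0$ and $z_h$. Alternatively, $\|\widehat u_h^{n+1}\|_a\leq C$ follows directly from the source bound and Lemma~\ref{lem:discrete-energy-coercivity}-type estimates. We therefore obtain the recursion
\[
\|e^{n+1}\|_a^2\leq(1-2\theta_n)\|e^n\|_a^2+B^2\theta_n^2 .
\]

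The main step is concluding $\|e^n\|_a\to0$ from this recursion under \eqref{eq:step-size-conditions}. Note that $1-2\theta_n$ may be negative when $\theta_n>1/2$. The error stays bounded in any case, since everything lies in the bounded set $K_h$. Because $\sum\theta_n^2<\infty$, we have $\theta_n\to0$, so for $n\geq N$ the factor satisfies $0\leq1-2\theta_n\leq e^{-2\theta_n}$. We then apply the standard Robbins--Siegmund/Chung-type argument. Summing gives $\sum_n\theta_n\|e^n\|_a^2\leq\|e^N\|_a^2+B^2\sum\theta_n^2<\infty$. Since $\sum\theta_n=\infty$, this yields $\liminf\|e^n\|_a=0$. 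The recursion also shows that $\|e^n\|_a^2+B^2\sum_{k\geq n}\theta_k^2$ is nonincreasing, so $\|e^n\|_a^2$ converges, and its limit must therefore be $0$. This proves \eqref{eq:technical-iteration-H1-convergence}.

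Finally, for fixed $h$ the space $\Vh$ is finite dimensional, and $\|\nabla\cdot\|_{L^2(\Omega_h)}$ is a norm on it. All norms are therefore equivalent, with constants depending on $h$; for instance $\|v_h\|_{L^\infty}=\max_i|v_h(x_i)|\leq C_h\|\nabla v_h\|_{L^2}$. This gives \eqref{eq:technical-iteration-Linf-convergence}.
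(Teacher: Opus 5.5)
Your proof is correct and follows the paper's route. You expand $\|u_h^{n+1}-u_h\|_{a_h}^2$ along $u_h^{n+1}=u_h^n-\theta_n(u_h^n-\widehat u_h^{n+1})$, use the strong subgradient inequality at $u_h$, bound $\|u_h^n-\widehat u_h^{n+1}\|_{a_h}$ via invariance of the compact box $K_h$, and get $L^\infty$ from norm equivalence on $V_h$. The only differences are minor: the quadratic growth $\mathcal J_h(u_h^n)-\mathcal J_h(u_h)\geq\frac12\|u_h^n-u_h\|_{a_h}^2$ gives you the factor $1-2\theta_n$, where the paper, using only minimality, gets $1-\theta_n\in[0,1)$ and avoids your caveat about $\theta_n>1/2$; and you close the scalar recursion by a Robbins--Siegmund summability-and-monotonicity argument, whereas the paper unrolls it and uses $\prod_k(1-\theta_k)\to0$.
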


\begin{proof}
Write
\[
  \Norm{v_h}_{a_h}^2
  :=
  a_h(v_h,v_h),
  \qquad
  d_h^n
  :=
  u_h^n-\widehat u_h^{n+1}.
\]
The Poisson step gives
\begin{equation}
  a_h(d_h^n,v_h)
  =
  a_h(u_h^n,v_h)
  -
  \sum_{i\in\Nh}
  \omega_iq_{h,i}^nv_h(x_i).
  \label{eq:iteration-subgradient-identity}
\end{equation}
By Lemma~\ref{lem:canonical-source-supergradient},
$q_h^n=\overline q_h[u_h^n]$ represents a supergradient of
$\mathcal R_h$ at $u_h^n$, and hence
\[
  \mathcal R_h(v_h)-\mathcal R_h(u_h^n)
  \leq
  \sum_{i\in\Nh}
  \omega_iq_{h,i}^n
  \qp{
    v_h(x_i)-u_h^n(x_i)
  }.
\]
Using \eqref{eq:iteration-subgradient-identity} with
$v_h-u_h^n$ in place of $v_h$ therefore gives
\[
\begin{aligned}
  \mathcal J_h(v_h)-\mathcal J_h(u_h^n)
  &\geq
  \frac12
  \qp{
    \Norm{v_h}_{a_h}^2
    -
    \Norm{u_h^n}_{a_h}^2
  }
  -
  a_h(
    u_h^n,
    v_h-u_h^n
  )
  +
  a_h(
    d_h^n,
    v_h-u_h^n
  )
  \\
  &=
  \frac12
  \Norm{v_h-u_h^n}_{a_h}^2
  +
  a_h(
    d_h^n,
    v_h-u_h^n
  ).
\end{aligned}
\]
Hence
\begin{equation}
  \mathcal J_h(v_h)
  \geq
  \mathcal J_h(u_h^n)
  +
  a_h(d_h^n,v_h-u_h^n)
  +
  \frac12
  \Norm{v_h-u_h^n}_{a_h}^2.
  \label{eq:strong-subgradient-inequality}
\end{equation}

Taking $v_h=\uh$ and using the minimality
$\mathcal J_h(\uh)\leq\mathcal J_h(u_h^n)$ yields
\begin{equation}
  a_h(
    d_h^n,
    u_h^n-\uh
  )
  \geq
  \frac12
  \Norm{u_h^n-\uh}_{a_h}^2.
  \label{eq:subgradient-distance-control}
\end{equation}

Since
\[
  u_h^{n+1}
  =
  u_h^n-\theta_nd_h^n,
\]
we have
\[
\begin{aligned}
  \Norm{u_h^{n+1}-\uh}_{a_h}^2
  &=
  \Norm{
    u_h^n-\uh-\theta_nd_h^n
  }_{a_h}^2
  \\
  &=
  \Norm{u_h^n-\uh}_{a_h}^2
  -
  2\theta_n
  a_h(
    d_h^n,
    u_h^n-\uh
  )
  +
  \theta_n^2
  \Norm{d_h^n}_{a_h}^2.
\end{aligned}
\]
Applying \eqref{eq:subgradient-distance-control} to the middle term
gives
\[
  \Norm{u_h^{n+1}-\uh}_{a_h}^2
  \leq
  (1-\theta_n)
  \Norm{u_h^n-\uh}_{a_h}^2
  +
  \theta_n^2
  \Norm{d_h^n}_{a_h}^2.
\]

By Lemma~\ref{lem:rearrangement-iteration-invariance}, the two points
$u_h^n$ and $\widehat u_h^{n+1}$ remain in the compact set $K_h$.
Thus, for fixed $h$,
\[
  \Norm{d_h^n}_{a_h}
  =
  \Norm{
    u_h^n-\widehat u_h^{n+1}
  }_{a_h}
  \leq
  C_h.
\]
Setting $e_n:=\Norm{u_h^n-\uh}_{a_h}^2$ gives
\begin{equation}
  e_{n+1}
  \leq
  (1-\theta_n)e_n
  +
  C_h^2\theta_n^2.
  \label{eq:iteration-recursion}
\end{equation}
We now show directly that $e_n\to0$. Fix $N\geq0$. Iterating
\eqref{eq:iteration-recursion} gives
\begin{align}
  e_{n+1}
  &\leq
  e_N
  \prod_{k=N}^{n}
  \qp{
    1-\theta_k
  }
  \nonumber\\
  &\quad
  +
  C_h^2
  \sum_{j=N}^{n}
  \theta_j^2
  \prod_{k=j+1}^{n}
  \qp{
    1-\theta_k
  }.
  \label{eq:iteration-recursion-product}
\end{align}
Since $0<\theta_k\leq1$ and $\sum_k\theta_k=\infty$,
$\prod_{k=N}^{n}\qp{1-\theta_k}\longrightarrow0$ as $n\to\infty$.
Moreover, every product in the sum is bounded by one, so
$\limsup_{n\to\infty}e_n\leq C_h^2\sum_{j=N}^{\infty}\theta_j^2$.
Because $\sum_j\theta_j^2<\infty$, the right-hand side tends to zero
as $N\to\infty$. Hence $e_n\to0$, proving
\eqref{eq:technical-iteration-H1-convergence}.

The $L^\infty$ conclusion follows from equivalence of norms on the
fixed finite-dimensional space $\Vh$.
\end{proof}

\begin{proof}[Proof of
Theorem~\ref{thm:discrete-energy-certificate}]
Fix $0\leq j\leq n$. Since $q_h^j$ represents a supergradient of
$\mathcal R_h$ at $u_h^j$,
\begin{equation}
  \mathcal R_h(v_h)
  -
  \mathcal R_h(u_h^j)
  \leq
  \sum_{i\in\mathcal N_h}
  \omega_iq_{h,i}^j
  \qp{
    v_h(x_i)-u_h^j(x_i)
  }
  \label{eq:certificate-supergradient-bound}
\end{equation}
for every $v_h\in\Vh$. By the defining Poisson equation for
$\widehat u_h^{j+1}$, the right-hand side equals
$a_h\qp{\widehat u_h^{j+1},v_h-u_h^j}$.
It follows that
\begin{align}
  \mathcal J_h(v_h)
  &=
  \frac12a_h(v_h,v_h)
  -
  \mathcal R_h(v_h)
  \nonumber\\
  &\geq
  \frac12a_h(v_h,v_h)
  -
  \mathcal R_h(u_h^j)
  -
  a_h\qp{
    \widehat u_h^{j+1},
    v_h-u_h^j
  }
  \nonumber\\
  &=
  \frac12
  a_h\qp{
    v_h-\widehat u_h^{j+1},
    v_h-\widehat u_h^{j+1}
  }
  +
  \ell_h^j.
  \label{eq:certificate-completed-square}
\end{align}
Hence $\mathcal J_h(v_h)\geq\ell_h^j$ for every $v_h\in\Vh$.
Taking $v_h=u_h$ gives $\ell_h^j\leq\mathcal J_h^\star$. Since this
holds for every $0\leq j\leq n$,
$L_h^n=\max_{0\leq j\leq n}\ell_h^j\leq\mathcal J_h^\star$. On the
other hand, the minimality of $u_h$ gives
$\mathcal J_h^\star\leq\mathcal J_h(u_h^n)=\mathcal J_h^n$.
This proves
\eqref{eq:certified-energy-bracket}, and therefore
\[
  0
  \leq
  \mathcal J_h^n-\mathcal J_h^\star
  \leq
  \mathcal J_h^n-L_h^n
  =
  \eta_h^n,
\]
which is \eqref{eq:certified-energy-error}.

Finally, the strong convexity estimate from
Lemma~\ref{lem:discrete-well-posedness} gives
\[
  \mathcal J_h(u_h^n)
  -
  \mathcal J_h(u_h)
  \geq
  \frac12
  a_h\qp{
    u_h^n-u_h,
    u_h^n-u_h
  }.
\]
Combining this inequality with
\eqref{eq:certified-energy-error} yields
\[
  \Norm{
    \nabla(u_h^n-u_h)
  }_{\leb2(\Omega_h)}
  \leq
  \sqrt{2\eta_h^n},
\]
which is \eqref{eq:certified-energy-norm-error}.
\end{proof}

\subsection{Linear consistency}

We first record the uniform estimates used below. Under convexity of
$\Omega_h$, uniform shape regularity and quasi-uniformity, there is a
constant $C$, independent of $h$, such that
\begin{align}
  \Norm{v_h}_{L^\infty(\Omega_h)}
  &\leq
  C
  \bigl(
    1+\abs{\log h}
  \bigr)^{1/2}
  \Norm{\nabla v_h}_{L^2(\Omega_h)},
  \label{eq:uniform-discrete-Sobolev}
  \\
  \Norm{
    \nabla(
      w-I_hw
    )
  }_{L^2(\Omega_h)}
  &\leq
  Ch
  \Norm{w}_{H^2(\Omega_h)},
  \label{eq:uniform-H1-interpolation}
  \\
  \Norm{
    w-I_hw
  }_{L^\infty(\Omega_h)}
  &\leq
  Ch
  \Norm{w}_{H^2(\Omega_h)}.
  \label{eq:uniform-Linf-interpolation}
\end{align}
These are the standard two-dimensional discrete Sobolev and scaled
interpolation estimates; see
\cite{RannacherScott1982,BrennerScott2008}.

\begin{lemma}[Linear consistency for mass-lumped Poisson problems]
\label{lem:linear-compactness-consistency}
Let $h\to0$ index a family of polygonal domains and meshes satisfying
\[
  \Omega_{-\delta_h}
  \subset
  \Omega_h
  \subset
  \Omega,
  \qquad
  \delta_h\longrightarrow0,
\]
where
\[
  \Omega_{-\delta}
  :=
  \qc{
    x\in\Omega:
    \dist(x,\partial\Omega)>\delta
  }.
\]
Assume that the meshes are uniformly shape-regular and quasi-uniform.
Let $q_h=(q_{h,i})_{i\in\Nh}$ satisfy
\begin{equation}
  \sup_h
  \max_{i\in\Nh}
  \abs{q_{h,i}}
  <\infty,
  \label{eq:uniform-nodal-source-bound}
\end{equation}
and define
\begin{equation}
  s_h
  :=
  s_h[q_h]
  =
  \sum_{i\in\Nh}
  q_{h,i}\varphi_i.
  \label{eq:linear-reconstructed-source}
\end{equation}
For any measurable function $r_h:\Omega_h\to\mathbb R$, we denote its
zero extension to $\Omega$ by
\begin{equation}
  \widetilde r_h(x)
  :=
  \begin{cases}
    r_h(x),
      &x\in\Omega_h,
    \\
    0,
      &x\in\Omega\setminus\Omega_h.
  \end{cases}
  \label{eq:source-zero-extension}
\end{equation}
This notation will be used in particular for reconstructed sources,
which need not have zero trace on $\partial\Omega_h$.

Let $U_h\in\Vh$ solve
\begin{equation}
  a_h(U_h,v_h)
  =
  \sum_{i\in\Nh}
  \omega_iq_{h,i}v_h(x_i)
  \qquad
  \text{for every }v_h\in\Vh.
  \label{eq:mass-lumped-linear-problem}
\end{equation}
If
\begin{equation}
  \widetilde{s_h}
  \stackrel{*}{\rightharpoonup}
  f
  \qquad
  \text{in }\leb{\infty}(\Omega)
  \quad\text{as }h\to0,
  \label{eq:reconstructed-source-weak-star}
\end{equation}
then, as $h\to0$,
\begin{align}
  \mathcal E_hU_h
  &\longrightarrow w
  &&\text{strongly in }\hoz(\Omega),
  \label{eq:linear-domain-H1-convergence}
  \\
  \mathcal E_hU_h
  &\longrightarrow w
  &&\text{strongly in }\leb{\infty}(\Omega),
  \label{eq:linear-domain-Linf-convergence}
\end{align}
where $w\in \hoz(\Omega)$ is the solution of
\begin{equation}
  \int_\Omega
  \nabla w\cdot\nabla\phi\dx
  =
  \int_\Omega
  f\phi\dx
  \qquad
  \text{for every }\phi\in \hoz(\Omega).
  \label{eq:limit-domain-poisson}
\end{equation}
\end{lemma}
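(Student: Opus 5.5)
The plan is to compare $U_h$ with the continuum solution $w$ through an auxiliary finite element projection, and to use elliptic regularity of $w$ on the convex $\cont{1,1}$ domain.

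\emph{Uniform bounds.} Test \eqref{eq:mass-lumped-linear-problem} with $v_h=U_h$. Lemma~\ref{lem:lumped-norm-equivalence}, \eqref{eq:uniform-nodal-source-bound} and Poincaré for $\mathcal E_hU_h$ give $\Norm{\nabla U_h}_{L^2(\Omega_h)}\leq C$. Hence, along subsequences, $\mathcal E_hU_h\rightharpoonup W$ weakly in $\hoz(\Omega)$ and strongly in $L^2$.

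\emph{Identification of the limit.} Fix $\phi\in\cont\infty_c(\Omega)$. For small $h$ we have $\operatorname{supp}\phi\subset\Omega_{-\delta_h}\subset\Omega_h$, so $I_h\phi\in\Vh$ and $\nabla I_h\phi\to\nabla\phi$ in $L^2$ by \eqref{eq:uniform-H1-interpolation}. Insert $v_h=I_h\phi$. The left side tends to $\int_\Omega\nabla W\cdot\nabla\phi\dx$. On the right, Lemma~\ref{lem:mass-lumping-error} with $p_h=s_h$ (bounded in $L^\infty$) and $r_h=I_h\phi$ shows that the lumped sum equals $\int_{\Omega_h}s_hI_h\phi\dx+O(h)$. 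This in turn equals $\int_\Omega\widetilde{s_h}\phi\dx+o(1)$, and by \eqref{eq:reconstructed-source-weak-star} it tends to $\int_\Omega f\phi\dx$. Density gives $W=w$, and uniqueness of $w$ upgrades the subsequence convergence to convergence of the whole family.

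\emph{Strong $H^1$ convergence.} Compute energies. Taking $v_h=U_h$, the identity gives $\Norm{\nabla U_h}^2=\sum_i\omega_iq_{h,i}U_h(x_i)$. Lemma~\ref{lem:mass-lumping-error} with $r_h=U_h$ turns this into $\int_\Omega\widetilde{s_h}\,\mathcal E_hU_h\dx+O(h)$. Because $\widetilde{s_h}$ converges weak-* in $L^\infty$ and $\mathcal E_hU_h\to w$ strongly in $L^1$, the product converges to $\int_\Omega fw\dx=\Norm{\nabla w}^2$. Norm convergence together with weak convergence gives strong convergence in $\hoz(\Omega)$.

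\emph{Uniform convergence.} This is the main obstacle, since \eqref{eq:uniform-discrete-Sobolev} carries a logarithmic factor and the boundaries do not match. Let $P_hw\in\Vh$ be the $a_h$-Ritz projection of $w|_{\Omega_h}$ (with zero boundary data) or, more concretely, the nodal interpolant of $w$ corrected at the boundary nodes. I would argue via compactness of $W^{2,p}$ instead. First, $w\in W^{2,p}(\Omega)$ for all $p<\infty$ by elliptic regularity, since $f\in L^\infty$. Split $U_h-I_hw$, with $I_hw$ the interpolant after zeroing boundary nodes. Since $w=0$ on $\partial\Omega$ and $w$ is Lipschitz, $|w|\le C\delta_h$ near $\partial\Omega_h$, so the boundary correction is $O(\delta_h+h)$ in $L^\infty$ and small in $H^1$. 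Using $w\in H^2$, the interpolation estimates \eqref{eq:uniform-Linf-interpolation}–\eqref{eq:uniform-H1-interpolation} give $\mathcal E_hI_hw\to w$ uniformly and in $H^1$.

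For the discrete remainder $e_h=U_h-I_hw$, the weak-* source convergence only yields $\Norm{\nabla e_h}\to0$ without a rate, which loses to the logarithmic factor in \eqref{eq:uniform-discrete-Sobolev}. I would therefore try instead the uniform equicontinuity argument. The sources are bounded, so discrete $W^{1,p}$ or Hölder estimates on convex polygons (Rannacher--Scott type stability of the Ritz projection in $W^{1,\infty}$, applied to the continuous solution of the Poisson problem with source $s_h$ on $\Omega_h$) give a uniform Hölder modulus for $U_h$. This modulus is uniform up to the boundary, because $\Omega_h$ is convex with uniform exterior cones. Arzelà--Ascoli combined with the already established $L^2$ convergence then yields uniform convergence of $\mathcal E_hU_h$ to $w$.
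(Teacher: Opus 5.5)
Your uniform bound, the identification of the weak limit, and the energy argument for strong convergence in $\hoz(\Omega)$ are correct. For that half of the statement your route is more direct than the paper's, which goes through auxiliary continuous problems and elliptic regularity.

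The gap is the uniform convergence. The equicontinuity of $\mathcal E_hU_h$ that you feed into Arzel\`a--Ascoli is not established, for three reasons. (i) $U_h$ is not the Ritz projection $R_hw(h)$ of the solution $w(h)\in\hoz(\Omega_h)$ of $-\Delta w(h)=s_h$, because its right-hand side is lumped. So no stability property of $R_h$ says anything about $U_h$ until $U_h-R_hw(h)$ has been controlled in $\leb{\infty}$. (ii) Rannacher--Scott $W^{1,\infty}$ stability holds on a fixed convex polygon with a domain-dependent constant. Here the $\Omega_h$ have unboundedly many corners with interior angles tending to $\pi$. Uniformity in $h$ is therefore not supplied by that result and is exactly what would need proof. (iii) A uniform exterior cone condition gives H\"older moduli for continuous solutions, not for finite element functions. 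As written, the $\leb{\infty}$ claim rests on an unproved discrete regularity estimate.

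The missing idea is to use $w(h)$ to obtain a \emph{rate} rather than a modulus. Your obstruction (no rate for $\nabla(U_h-I_hw)$) arises only because you compare with the limit $w$.

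Since $\Omega_h$ is convex, the Miranda--Talenti estimate gives $\Norm{w(h)}_{H^2(\Omega_h)}\leq C\Norm{s_h}_{L^2(\Omega_h)}\leq C$ uniformly in $h$. Set $W_h:=R_hw(h)$. C\'ea's lemma and \eqref{eq:uniform-H1-interpolation} then give $\Norm{\nabla(W_h-w(h))}_{L^2(\Omega_h)}\leq Ch$. Testing the error equation with $U_h-W_h$ and applying Lemma~\ref{lem:mass-lumping-error} gives $\Norm{\nabla(U_h-W_h)}_{L^2(\Omega_h)}\leq Ch$. These rates absorb the logarithm in \eqref{eq:uniform-discrete-Sobolev}, and together with \eqref{eq:uniform-Linf-interpolation} they yield $\Norm{U_h-w(h)}_{L^\infty(\Omega_h)}\leq Ch(1+\abs{\log h})^{1/2}$.

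What remains, $\mathcal E_hw(h)\to w$ uniformly, is a purely continuous statement. The paper proves it with the fixed-domain solution $z(h)$ with source $\widetilde{s_h}$. $W^{2,p}$ bounds on the $\cont{1,1}$ domain $\Omega$ give precompactness in $\cont1(\overline\Omega)$. The maximum principle gives $\Norm{w(h)-z(h)}_{L^\infty(\Omega_h)}\leq\Norm{z(h)}_{L^\infty(\partial\Omega_h)}\leq C\delta_h$.

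Alternatively, you can keep your Arzel\`a--Ascoli strategy but apply it to $\mathcal E_hw(h)$ instead of $\mathcal E_hU_h$. The exterior-cone (or supporting half-plane barrier) argument you invoke is valid for $w(h)$ and gives an $h$-uniform modulus. Convergence $\mathcal E_hw(h)\to w$ in $L^2$ follows from your $\hoz$ result together with the rate estimate above.
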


\begin{proof}
The nodal bound and the partition of unity imply
\begin{equation}
  \sup_h
  \Norm{s_h}_{L^\infty(\Omega_h)}
  <\infty.
  \label{eq:technical-uniform-source-bound}
\end{equation}

Let $w(h)\in \hoz(\Omega_h)$ solve
\begin{equation}
  \int_{\Omega_h}
  \nabla w(h)\cdot\nabla\phi\dx
  =
  \int_{\Omega_h}
  s_h\phi\dx
  \qquad
  \text{for every }\phi\in \hoz(\Omega_h).
  \label{eq:technical-continuous-Poisson}
\end{equation}
Testing with $w(h)$ and using Poincaré's inequality gives
\begin{equation}
  \sup_h
  \Norm{\nabla w(h)}_{L^2(\Omega_h)}
  <\infty.
  \label{eq:technical-continuous-energy-bound}
\end{equation}

Let $z(h)\in \hoz(\Omega)$ solve the corresponding problem on the
fixed domain:
\begin{equation}
  \int_\Omega
  \nabla z(h)\cdot\nabla\phi\dx
  =
  \int_\Omega
  \widetilde{s_h}\phi\dx
  \qquad
  \text{for every }\phi\in \hoz(\Omega).
  \label{eq:technical-fixed-domain-problem}
\end{equation}
By \eqref{eq:technical-uniform-source-bound}, the zero extensions satisfy
\[
  \sup_h
  \Norm{
    \widetilde{s_h}
  }_{L^p(\Omega)}
  <\infty
  \qquad
  \text{for every }1\leq p<\infty.
\]
Since the fixed domain $\Omega$ has $\cont{1,1}$ boundary, elliptic
regularity therefore gives, for every fixed $p>2$,
\begin{equation}
  \Norm{z(h)}_{W^{2,p}(\Omega)}
  \leq
  C_p
  \Norm{
    \widetilde{s_h}
  }_{L^p(\Omega)}
  \leq
  C_p.
  \label{eq:technical-fixed-domain-W2p}
\end{equation}
In two dimensions,
\[
  W^{2,p}(\Omega)
  \hookrightarrow
  C^{1,\gamma}(\overline\Omega),
  \qquad
  \gamma
  =
  1-\frac{2}{p}
  >0,
\]
and the embedding
$C^{1,\gamma}(\overline\Omega)\hookrightarrow C^1(\overline\Omega)$
is compact. Hence $\{z(h)\}_h$ is precompact in
$\cont1(\overline\Omega)$.

If $z(h_k)\to z$ strongly in $\cont1(\overline\Omega)$ along a
subsequence, then passing to the limit in
\eqref{eq:technical-fixed-domain-problem} using
\eqref{eq:reconstructed-source-weak-star} shows that $z$ satisfies
\eqref{eq:limit-domain-poisson}. By uniqueness, $z=w$. Since every
convergent subsequence has the same limit, the full sequence satisfies
\begin{equation}
  z(h)
  \longrightarrow
  w
  \qquad
  \text{strongly in }\cont1(\overline\Omega).
  \label{eq:technical-fixed-domain-C1}
\end{equation}

We next compare the fixed-domain and varying-domain solutions. The
inclusion
\[
  \Omega_{-\delta_h}
  \subset
  \Omega_h
\]
implies
\begin{equation}
  \sup_{x\in\partial\Omega_h}
  \dist(x,\partial\Omega)
  \leq
  \delta_h.
  \label{eq:technical-boundary-distance}
\end{equation}
Indeed, if $x\in\partial\Omega_h$ satisfied
$\dist(x,\partial\Omega)>\delta_h$, then
$x\in\Omega_{-\delta_h}\subset\Omega_h$, contradicting
$x\in\partial\Omega_h$.

Since $\widetilde{s_h}=s_h$ on $\Omega_h$, the function
\[
  w(h)-z(h)|_{\Omega_h}
\]
is harmonic in $\Omega_h$ and has boundary value $-z(h)$ on
$\partial\Omega_h$. The maximum principle, the zero trace of $z(h)$ on
$\partial\Omega$, and \eqref{eq:technical-fixed-domain-W2p} give
\begin{align}
  \Norm{
    w(h)-z(h)
  }_{L^\infty(\Omega_h)}
  &\leq
  \Norm{z(h)}_{L^\infty(\partial\Omega_h)}
  \notag
  \\
  &\leq
  \Norm{\nabla z(h)}_{\leb{\infty}(\Omega)}
  \sup_{x\in\partial\Omega_h}
  \dist(x,\partial\Omega)
  \notag
  \\
  &\leq
  C\delta_h.
  \label{eq:technical-harmonic-comparison}
\end{align}
It follows from \eqref{eq:technical-fixed-domain-C1} that
\begin{equation}
  \Norm{
    w(h)-w
  }_{L^\infty(\Omega_h)}
  \longrightarrow0.
  \label{eq:technical-interior-Linf}
\end{equation}

It remains to control the part of $\Omega$ omitted by $\Omega_h$.
The domain inclusions give
\[
  \Omega\setminus\Omega_h
  \subset
  \qc{
    x\in\Omega:
    \dist(x,\partial\Omega)\leq\delta_h
  }.
\]
Since $w\in \cont1(\overline\Omega)$ and $w=0$ on $\partial\Omega$,
\begin{equation}
  \Norm{w}_{L^\infty(\Omega\setminus\Omega_h)}
  \leq
  \delta_h
  \Norm{\nabla w}_{\leb{\infty}(\Omega)}
  \longrightarrow0.
  \label{eq:technical-boundary-strip}
\end{equation}
Because $\mathcal E_hw(h)=0$ on $\Omega\setminus\Omega_h$,
\eqref{eq:technical-interior-Linf} and
\eqref{eq:technical-boundary-strip} yield
\begin{equation}
  \mathcal E_hw(h)
  \longrightarrow
  w
  \qquad
  \text{strongly in }\leb{\infty}(\Omega).
  \label{eq:technical-continuous-Linf}
\end{equation}

The energy identity gives
\[
  \Norm{\nabla w(h)}_{L^2(\Omega_h)}^2
  =
  \int_\Omega
  \widetilde{s_h}
  \mathcal E_hw(h)
  \dx.
\]
By \eqref{eq:technical-continuous-Linf}, the uniform source bound and
the weak-star convergence of $\widetilde{s_h}$,
\[
  \int_\Omega
  \widetilde{s_h}
  \mathcal E_hw(h)
  \dx
  \longrightarrow
  \int_\Omega
  fw\dx
  =
  \Norm{\nabla w}_{L^2(\Omega)}^2.
\]
Together with weak compactness in $\hoz(\Omega)$, this proves
\begin{equation}
  \mathcal E_hw(h)
  \longrightarrow
  w
  \qquad
  \text{strongly in }\hoz(\Omega).
  \label{eq:technical-continuous-H1}
\end{equation}

Next, let $W_h\in\Vh$ solve
\begin{equation}
  a_h(W_h,v_h)
  =
  \int_{\Omega_h}
  s_hv_h\dx
  \qquad
  \text{for every }v_h\in\Vh.
  \label{eq:technical-consistent-fem-problem}
\end{equation}
Convexity of $\Omega_h$, the Miranda--Talenti estimate and Poincaré's
inequality give
\begin{equation}
  \Norm{w(h)}_{H^2(\Omega_h)}
  \leq
  C
  \Norm{s_h}_{L^2(\Omega_h)}
  \leq C.
  \label{eq:technical-uniform-H2}
\end{equation}
Céa's lemma and \eqref{eq:uniform-H1-interpolation} imply
\begin{equation}
  \Norm{
    \nabla(
      W_h-w(h)
    )
  }_{L^2(\Omega_h)}
  \leq
  Ch.
  \label{eq:technical-Ritz-H1}
\end{equation}
Using \eqref{eq:uniform-discrete-Sobolev} on
$W_h-I_hw(h)$, together with
\eqref{eq:uniform-Linf-interpolation}, gives
\begin{equation}
  \Norm{
    W_h-w(h)
  }_{L^\infty(\Omega_h)}
  \leq
  Ch
  \bigl(
    1+\abs{\log h}
  \bigr)^{1/2}.
  \label{eq:technical-Ritz-Linf}
\end{equation}

Finally, set $e_h:=U_h-W_h$. Then
\begin{equation}
  a_h(e_h,v_h)
  =
  \sum_{i\in\Nh}
  \omega_iq_{h,i}v_h(x_i)
  -
  \int_{\Omega_h}
  s_hv_h\dx.
  \label{eq:technical-lumping-error-equation}
\end{equation}
Since $s_h(x_i)=q_{h,i}$, testing with $e_h$ and applying
Lemma~\ref{lem:mass-lumping-error} gives
\begin{equation}
  \Norm{\nabla e_h}_{L^2(\Omega_h)}
  \leq
  Ch
  \Norm{s_h}_{L^\infty(\Omega_h)}
  \longrightarrow0.
  \label{eq:technical-lumping-H1}
\end{equation}
The discrete Sobolev estimate gives
\begin{equation}
  \Norm{e_h}_{L^\infty(\Omega_h)}
  \leq
  Ch
  \bigl(
    1+\abs{\log h}
  \bigr)^{1/2}
  \Norm{s_h}_{L^\infty(\Omega_h)}
  \longrightarrow0.
  \label{eq:technical-lumping-Linf}
\end{equation}
Combining the continuous-domain, Ritz and quadrature estimates proves
\eqref{eq:linear-domain-H1-convergence} and
\eqref{eq:linear-domain-Linf-convergence}.
\end{proof}

\subsection{Continuum variational structure}

For $v\in L^1(\Omega)$, let $v^\ast$ denote the decreasing
rearrangement defined in
\eqref{eq:continuum-decreasing-rearrangement}, and define
\begin{equation}
  \mathcal R(v)
  :=
  \int_0^{\abs{\Omega}}
  G(r)v^\ast(r)\d r.
  \label{eq:continuum-rearrangement-functional}
\end{equation}
We write $q\sim G$ if $q:\Omega\to\mathbb R$ and
$G:(0,\abs{\Omega})\to\mathbb R$ are equimeasurable, that is,
\begin{equation}
  \abs{
    \qc{
      x\in\Omega:
      q(x)>t
    }
  }
  =
  \abs{
    \qc{
      r\in(0,\abs{\Omega}):
      G(r)>t
    }
  }
  \qquad
  \text{for every }t\in\mathbb R.
  \label{eq:continuum-equimeasurability}
\end{equation}
Thus $q$ and $G$ have the same distribution of values.

Define
\begin{equation}
  \mathcal J(v)
  :=
  \frac12
  \int_\Omega
  \abs{\nabla v}^2\dx
  -
  \mathcal R(v),
  \qquad
  v\in \hoz(\Omega).
  \label{eq:continuum-rearrangement-energy}
\end{equation}

\begin{lemma}[Continuum rearrangement functional]
\label{lem:continuum-rearrangement-properties}
The functional $\mathcal R:L^1(\Omega)\to\reals$ is concave and
satisfies
\begin{equation}
  \abs{
    \mathcal R(v)-\mathcal R(w)
  }
  \leq
  M_\alpha
  \Norm{v-w}_{L^1(\Omega)}
  \qquad
  \text{for every }v,w\in L^1(\Omega).
  \label{eq:continuum-rearrangement-Lipschitz}
\end{equation}
Consequently, $\mathcal J$ is strictly convex on $\hoz(\Omega)$.
\end{lemma}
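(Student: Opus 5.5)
The plan is to mirror the discrete argument in Lemmas~\ref{lem:discrete-rearrangement-representation} and~\ref{lem:discrete-rearrangement-concavity}, using a continuum minimum representation of $\mathcal R$. First I would show
\[
  \mathcal R(v)
  =
  \min_{q\sim G}
  \int_\Omega q(x)v(x)\dx ,
  \qquad v\in L^1(\Omega).
\]
For the inequality $\int_\Omega qv\dx\geq\mathcal R(v)$ for every $q\sim G$, I would invoke the Hardy--Littlewood inequality in its reversed form. Since $G$ is nondecreasing, the decreasing rearrangement of $-q$ is $-G(\abs{\Omega}-\cdot)$. Hence
\[
  \int_\Omega (-q)v\dx\leq\int_0^{\abs{\Omega}}(-G)(\abs{\Omega}-r)\,v^\ast(r)\d r .
\]
This does not quite match the pairing in $\mathcal R$ directly, so I would argue more cleanly with the layer-cake formula. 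Write $G(r)=G(0)+\int_0^{r}\d G$, where $\d G\geq0$ because $G$ is Lipschitz and nondecreasing. Then $\int_\Omega qv\dx$ becomes $\int_\Omega G(0)v\dx$ plus an integral over $t$ of $\int_{\{q>t\}}v\dx$, and each such term is bounded below by the integral of $v^\ast$ over a terminal interval of length $\abs{q>t}$. That lower bound is exactly the corresponding term in $\int G v^\ast$.

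To see that the minimum is attained, I would construct $q=G(\sigma(x))$ with $\sigma:\Omega\to(0,\abs{\Omega})$ measure preserving and $v=v^\ast\circ\sigma$. On a plateau of $v$, $\sigma$ should spread the level set uniformly over its rank interval. Such a $\sigma$ exists for nonatomic $\Omega$; this is Ryff's theorem, or I would build it directly from the distribution function plus an auxiliary coordinate on the plateaus. This existence step is where I expect the main technical obstacle, precisely because of positive-measure level sets. As a fallback I would avoid attainment altogether and use only the infimum formula. An infimum of linear functionals already gives concavity, and I would obtain the Lipschitz bound by approximation.

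With the representation in hand, concavity is immediate, since $\mathcal R$ is an infimum of linear functionals $v\mapsto\int qv$. For the Lipschitz estimate, every admissible $q$ satisfies $0\leq q\leq M_\alpha$ because $0\leq G\leq M_\alpha$. If $q_w$ attains the minimum at $w$, or is $\varepsilon$-optimal there, then
\[
  \mathcal R(v)-\mathcal R(w)
  \leq
  \int_\Omega q_w(v-w)\dx+\varepsilon
  \leq
  M_\alpha\Norm{v-w}_{L^1(\Omega)}+\varepsilon .
\]
Exchanging $v$ and $w$ and letting $\varepsilon\to0$ gives \eqref{eq:continuum-rearrangement-Lipschitz}. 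Alternatively, the Lipschitz bound follows without the representation from the rearrangement contraction $\Norm{v^\ast-w^\ast}_{L^1}\leq\Norm{v-w}_{L^1}$ together with $\abs{G}\leq M_\alpha$, and I would cite this as a backup.

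Finally, on $\hoz(\Omega)\subset L^1(\Omega)$ the functional $-\mathcal R$ is convex and continuous. The Dirichlet term $\frac12\int\abs{\nabla v}^2$ is strictly convex on $\hoz(\Omega)$, since $\nabla(v-w)=0$ forces $v=w$ by the Poincaré inequality. Hence $\mathcal J$, the sum of a strictly convex and a convex functional, is strictly convex.
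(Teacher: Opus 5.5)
Your proposal is correct and follows essentially the same route as the paper: the infimum representation $\mathcal R(v)=\inf_{q\sim G}\int_\Omega qv\dx$ via Hardy--Littlewood and Ryff, concavity as an infimum of linear functionals, an $\varepsilon$-optimal source together with $0\leq q\leq M_\alpha$ for the Lipschitz bound, and strict convexity from the Dirichlet term. One small correction to the route you abandoned: since $-G$ is already nonincreasing, $(-q)^\ast=-G$ (not $-G(\abs{\Omega}-\cdot)$), so Hardy--Littlewood yields $\int_\Omega qv\dx\geq\int_0^{\abs{\Omega}}G v^\ast\,\mathrm dr$ directly; your layer-cake argument is valid but not needed.
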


\begin{proof}
Since $G$ is nondecreasing, the Hardy--Littlewood rearrangement
inequality and the characterisation of equimeasurable rearrangements
\cite{HardyLittlewoodPolya1952,Ryff1965},
\begin{equation}
  \mathcal R(v)
  =
  \inf_{q\sim G}
  \int_\Omega vq\dx.
  \label{eq:continuum-minimum-representation}
\end{equation}
Indeed, among all spatial rearrangements of $G$, the integral is
minimised by pairing the decreasing rearrangement of $v$ with the
increasing rearrangement of $G$, which is $G$ itself.

For every fixed $q\sim G$, the map
\[
  v
  \longmapsto
  \int_\Omega
  vq\dx
\]
is linear. Hence \eqref{eq:continuum-minimum-representation} expresses
$\mathcal R$ as an infimum of linear functionals, and therefore
$\mathcal R$ is concave.

Equimeasurability preserves the essential range. Since
\[
  0
  \leq
  G
  \leq
  M_\alpha
  \qquad
  \text{almost everywhere on }(0,\abs{\Omega}),
\]
every $q\sim G$ satisfies
\begin{equation}
  0
  \leq
  q
  \leq
  M_\alpha
  \qquad
  \text{almost everywhere in }\Omega.
  \label{eq:continuum-rearranged-source-bound}
\end{equation}

Let $\varepsilon>0$ and choose $q_\varepsilon\sim G$ such that
\[
  \int_\Omega
  wq_\varepsilon\dx
  \leq
  \mathcal R(w)+\varepsilon.
\]
Using \eqref{eq:continuum-minimum-representation} for $v$ gives
\begin{align*}
  \mathcal R(v)-\mathcal R(w)
  &\leq
  \int_\Omega
  vq_\varepsilon\dx
  -
  \int_\Omega
  wq_\varepsilon\dx
  +
  \varepsilon
  \\
  &\leq
  M_\alpha
  \Norm{v-w}_{L^1(\Omega)}
  +
  \varepsilon.
\end{align*}
Letting $\varepsilon\to0$ and interchanging $v$ and $w$ proves
\eqref{eq:continuum-rearrangement-Lipschitz}.

Finally, $v\mapsto-\mathcal R(v)$ is convex, while
\[
  v
  \longmapsto
  \frac12
  \int_\Omega
  \abs{\nabla v}^2\dx
\]
is strictly convex on $\hoz(\Omega)$. Their sum $\mathcal J$ is
therefore strictly convex.
\end{proof}

\begin{lemma}[Absence of non-maximal plateaux]
\label{lem:continuum-no-secondary-plateaux}
Let $u$ be the continuum Grad--Mercier solution and
$M:=\max_{\overline\Omega}u$. Then
\begin{equation}
  \abs{
    \qc{
      x\in\Omega:
      u(x)=t
    }
  }
  =
  0
  \qquad
  \text{for every }t<M.
  \label{eq:no-lower-positive-measure-levels}
\end{equation}
\end{lemma}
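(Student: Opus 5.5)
We argue by contradiction, using the classical fact that a Sobolev function has zero gradient almost everywhere on each of its level sets. Fix $t<M$ and set $P_t:=\qc{x\in\Omega:u(x)=t}$, and suppose $\abs{P_t}>0$.

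The first step is to use the regularity from Theorem~\ref{thm:continuum-well-posedness}. Since $u\in\cont{1,1}(\overline\Omega)\subset W^{2,\infty}(\Omega)$, Stampacchia's theorem applied to $u$ gives $\nabla u=0$ a.e.\ on $P_t$. Applying it again to each component $\partial_ju\in W^{1,\infty}(\Omega)$, on the set $\qc{\partial_ju=0}$, gives $D^2u=0$ a.e.\ on $P_t$. In particular $\Delta u=0$ a.e.\ on $P_t$.

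The second step identifies the source on $P_t$. Because $\Delta u\in\cont0(\overline\Omega)$ and $u$ is a weak solution, we have $-\Delta u(x)=g\qp{\abs{u\geq u(x)}}$ a.e.\ in $\Omega$. On $P_t$ the argument of $g$ is the constant $\abs{u\geq t}$, so
\[
  0=-\Delta u=g\qp{\abs{u\geq t}}\quad\text{a.e.\ on }P_t .
\]
Since $\abs{P_t}>0$, this forces $g\qp{\abs{u\geq t}}=0$. Assumption~\ref{ass:rank-law} says $\alpha$ is the unique zero of $g$, hence $\abs{u\geq t}=\alpha$.

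The third step produces the contradiction. Since $t<M$ and $u(x)=t$ for some $x\in P_t$, the point $x$ lies in $\Omega\setminus D_u$. Then \eqref{eq:superlevel-measure-off-dead-core} gives $\abs{u\geq t}>\alpha$. For a direct check without that statement: $\qc{u\geq t}\supseteq D_u\cup P_t$, these two sets are disjoint, and $\abs{D_u}=\alpha$ by \eqref{eq:continuum-dead-core-measure}, so $\abs{u\geq t}\geq\alpha+\abs{P_t}>\alpha$. Either way this contradicts $\abs{u\geq t}=\alpha$, which proves \eqref{eq:no-lower-positive-measure-levels}.

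The main obstacle is the first step: justifying that $\Delta u=0$ a.e.\ on a level set. It requires second-order Stampacchia, applied to the Lipschitz first derivatives, together with the identification of the pointwise a.e.\ Laplacian of the $\cont{1,1}$ function with the weak one. Both hold because $u\in W^{2,\infty}$. The remaining steps follow directly from earlier results.
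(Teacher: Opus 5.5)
Your proof is correct and follows the paper's argument: you apply Stampacchia's level-set property to $u$ and then to its Lipschitz derivatives to get $\Delta u=0$ a.e.\ on the level set, which contradicts $-\Delta u=g(\abs{u\geq t})\neq0$ there because $\abs{u\geq t}>\alpha$. Your direct bound $\abs{u\geq t}\geq\abs{D_u}+\abs{P_t}>\alpha$ is a slightly cleaner justification of that last inequality than the paper's appeal to \eqref{eq:superlevel-measure-off-dead-core}.
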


\begin{proof}
Let
\[
  E_t
  :=
  \qc{
    x\in\Omega:
    u(x)=t
  }.
\]
Suppose that $\abs{E_t}>0$. By the Stampacchia level-set property
\cite[Theorem~6.19]{Leoni2009},
\begin{equation}
  \nabla u
  =
  0
  \qquad
  \text{almost everywhere on }E_t.
  \label{eq:gradient-zero-on-level-set}
\end{equation}
Since $u\in \cont{1,1}(\overline\Omega)$, each first derivative
$\partial_j u$ belongs to $W^{1,\infty}(\Omega)$. Moreover,
\eqref{eq:gradient-zero-on-level-set} gives
$\partial_j u=0$ almost everywhere on $E_t$. Applying the same
level-set property to $\partial_j u$ at level zero yields
\[
  \nabla(\partial_j u)
  =
  0
  \qquad
  \text{almost everywhere on }E_t.
\]
Since this holds for every $j$, we obtain
\[
  D^2u
  =
  0
  \qquad
  \text{almost everywhere on }E_t,
\]
and hence $\Delta u=0$ almost everywhere on $E_t$.

On the other hand, $t<M$ implies
\[
  \abs{u\geq t}
  >
  \alpha.
\]
Assumption~\ref{ass:rank-law} therefore gives
\[
  -\Delta u
  =
  g\bigl(
    \abs{u\geq t}
  \bigr)
  >
  0
  \qquad
  \text{almost everywhere on }E_t,
\]
which is a contradiction.
\end{proof}

\begin{lemma}[Continuum variational characterisation]
\label{lem:continuum-variational-characterisation}
Let $u$ be the continuum Grad--Mercier solution and define
\begin{equation}
  q_u(x)
  :=
  G\bigl(
    \abs{u\geq u(x)}
  \bigr).
  \label{eq:continuum-canonical-source}
\end{equation}
Then
\begin{equation}
  \mathcal R(v)-\mathcal R(u)
  \leq
  \int_\Omega
  q_u(v-u)\dx
  \qquad
  \text{for every }v\in L^1(\Omega),
  \label{eq:continuum-supergradient-inequality}
\end{equation}
and $u$ is the unique minimiser of $\mathcal J$.
\end{lemma}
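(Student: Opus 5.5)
The plan has three steps. First, show that $q_u$ is an admissible competitor in the representation \eqref{eq:continuum-minimum-representation}, that is, $q_u\sim G$. Second, show that $\mathcal R(u)=\int_\Omega q_u u\dx$. Together these give the supergradient inequality at once: for any $v\in L^1(\Omega)$,
\[
  \mathcal R(v)\leq\int_\Omega q_u v\dx=\mathcal R(u)+\int_\Omega q_u(v-u)\dx .
\]
Third, combine this with the weak equation to obtain minimality, and use strict convexity for uniqueness.

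\textbf{Equimeasurability.} Set $\Phi(x):=\abs{u\geq u(x)}$, so that $q_u=G\circ\Phi$. It suffices to show that $\Phi$ is equimeasurable with the identity on $(0,\abs{\Omega})$, except that the dead core is collapsed to the single value $\alpha$. Concretely:
\begin{itemize}
\item On $D_u$, \eqref{eq:superlevel-measure-on-dead-core} gives $\Phi=\alpha$ on a set of measure $\alpha$.
\item For $\rho>\alpha$, Lemma~\ref{lem:continuum-no-secondary-plateaux} shows that the distribution function $t\mapsto\abs{u\geq t}$ is continuous for $t<M$. Hence $\abs{\{\Phi\leq\rho\}}=\rho$ for $\rho\geq\alpha$. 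This is the standard argument that $\abs{u\geq u(x)}$ is uniformly distributed when there are no atoms.
\end{itemize}
Since $G$ is constant (equal to $0$) on $[0,\alpha]$, composing with $G$ gives $\abs{\{q_u>t\}}=\abs{\{r: G(r)>t\}}$ for every $t$. So $q_u\sim G$.

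\textbf{Attainment.} By the layer-cake formula, $u^\ast(r)=M$ for $r<\alpha$ and $u^\ast(\Phi(x))=u(x)$ for $x\notin D_u$. The latter holds because the distribution function is continuous and strictly decreasing there, again by the absence of plateaux. A change of variables along the distribution of $\Phi$ then gives
\[
  \int_\Omega G(\Phi(x))u(x)\dx=\int_0^{\abs{\Omega}}G(r)u^\ast(r)\d r .
\]
Alternatively, one can check directly that $u$ and $q_u$ are similarly ordered. Since $G$ vanishes on the plateau, any mismatch there contributes nothing. I expect this measure-theoretic bookkeeping to be the main obstacle, in particular making the pushforward identity rigorous in the presence of the atom at $M$. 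The clipping is exactly what removes the difficulty, because $G=0$ on $[0,\alpha]$.

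\textbf{Minimality.} By the remark after \eqref{eq:clipped-rank-law}, $q_u=g(\abs{u\geq u(\cdot)})$, so the weak formulation \eqref{eq:weak-problem} reads $\int\nabla u\cdot\nabla\phi=\int q_u\phi$. For $v\in\hoz(\Omega)$, expanding the quadratic term gives
\[
  \mathcal J(v)-\mathcal J(u)=\int_\Omega\nabla u\cdot\nabla(v-u)\dx+\tfrac12\Norm{\nabla(v-u)}^2_{L^2(\Omega)}-\bigl(\mathcal R(v)-\mathcal R(u)\bigr).
\]
Applying the weak equation to the first term and the supergradient inequality to the last, this is at least $\tfrac12\Norm{\nabla(v-u)}^2_{L^2(\Omega)}\geq 0$. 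Hence $u$ is a minimiser, and the strict inequality for $v\neq u$ (equivalently, the strict convexity from Lemma~\ref{lem:continuum-rearrangement-properties}) gives uniqueness.
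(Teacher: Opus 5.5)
Your proposal is correct and follows the paper's proof essentially step for step. You show $q_u\sim G$ through the distribution of the rank map (an atom of mass $\alpha$ at $\alpha$ on $D_u$, uniform on $(\alpha,\abs{\Omega})$ off it), prove $\mathcal R(u)=\int_\Omega q_u u\dx$, subtract, and expand the quadratic term using the weak equation. Your change-of-variables argument for attainment is simply a more explicit version of the paper's appeal to the opposite-order equality case of Hardy--Littlewood.

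One slip in your alternative remark: $u$ and $q_u$ are \emph{oppositely} ordered, since larger $u$ gives a smaller rank and hence a smaller $q_u$. That is what attains the infimum in \eqref{eq:continuum-minimum-representation}; similar ordering would attain the supremum instead.
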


\begin{proof}
Let
\[
  D_u
  :=
  \qc{
    x\in\Omega:
    u(x)=\max_{\overline\Omega}u
  }.
\]
By Theorem~\ref{thm:continuum-well-posedness},
\[
  \abs{D_u}
  =
  \alpha.
\]
Moreover, Lemma~\ref{lem:continuum-no-secondary-plateaux} shows that
every level set below the maximum has zero measure.

Define the non-strict rank map
\[
  r_u(x)
  :=
  \abs{
    \qc{
      y\in\Omega:
      u(y)\geq u(x)
    }
  }.
\]
On the maximum plateau,
\begin{equation}
  r_u(x)
  =
  \alpha
  \qquad
  \text{for every }x\in D_u.
  \label{eq:continuum-plateau-rank}
\end{equation}
Away from $D_u$, the absence of positive-measure level sets implies
that the rank variable is uniformly distributed over
$(\alpha,\abs{\Omega})$. More precisely, writing $\mathcal L^d$ for
$d$-dimensional Lebesgue measure, $\mu|_A$ for the restriction of a
measure $\mu$ to $A$, and $T_\#\mu$ for its pushforward under a map
$T$, we have
\begin{equation}
  (r_u)_\#
  \qp{
    \mathcal L^2|_{\Omega\setminus D_u}
  }
  =
  \mathcal L^1|_{(\alpha,\abs{\Omega})}.
  \label{eq:continuum-rank-pushforward}
\end{equation}
Indeed, let $s\in(\alpha,\abs{\Omega})$. Since the level sets of $u$
below its maximum have zero measure, the distribution function
\[
  t
  \longmapsto
  \abs{u\geq t}
\]
is continuous below the maximum. Hence there exists $t_s$ such that
\[
  \abs{u\geq t_s}
  =
  s.
\]
Up to a null set,
\[
  \qc{
    x\in\Omega\setminus D_u:
    r_u(x)\leq s
  }
  =
  \qc{
    x\in\Omega:
    u(x)\geq t_s
  }
  \setminus D_u,
\]
and therefore
\[
  \abs{
    \qc{
      x\in\Omega\setminus D_u:
      r_u(x)\leq s
    }
  }
  =
  s-\alpha.
\]
This is precisely the distribution function of Lebesgue measure on
$(\alpha,\abs{\Omega})$, and proves
\eqref{eq:continuum-rank-pushforward}.

Set
\begin{equation}
  q_u(x)
  :=
  G\qp{r_u(x)}.
\end{equation}
Since $G=0$ on $[0,\alpha]$, the plateau $D_u$ contributes mass
$\alpha$ at the value zero. Together with
\eqref{eq:continuum-rank-pushforward}, this shows that $q_u$ and $G$
have the same distribution of values. Thus
\begin{equation}
  q_u
  \sim
  G.
  \label{eq:continuum-source-equimeasurable}
\end{equation}

The functions $u$ and $q_u$ are oppositely ordered. Indeed, if
$u(x)>u(y)$, then
\[
  r_u(x)
  \leq
  r_u(y),
\]
and, since $G$ is nondecreasing,
\[
  q_u(x)
  \leq
  q_u(y).
\]
The opposite-order case of the Hardy--Littlewood rearrangement
inequality therefore gives
\begin{equation}
  \mathcal R(u)
  =
  \int_\Omega
  uq_u\dx.
  \label{eq:continuum-source-attains-minimum}
\end{equation}
On the other hand, since $q_u\sim G$, the minimum representation
\eqref{eq:continuum-minimum-representation} gives
\begin{equation}
  \mathcal R(v)
  \leq
  \int_\Omega
  vq_u\dx
  \qquad
  \text{for every }v\in L^1(\Omega).
  \label{eq:continuum-fixed-source-upper-bound}
\end{equation}
Subtracting \eqref{eq:continuum-source-attains-minimum} from
\eqref{eq:continuum-fixed-source-upper-bound} yields
\begin{equation}
  \mathcal R(v)-\mathcal R(u)
  \leq
  \int_\Omega
  q_u(v-u)\dx,
\end{equation}
so $q_u$ is a supergradient of $\mathcal R$ at $u$.

Finally, $r_u(x)\geq\alpha$ throughout $\Omega$, and hence
\[
  q_u(x)
  =
  G\bigl(r_u(x)\bigr)
  =
  g\bigl(r_u(x)\bigr).
\]
The Grad--Mercier equation therefore gives
\[
  \int_\Omega
  \nabla u\cdot\nabla(v-u)\dx
  =
  \int_\Omega
  q_u(v-u)\dx
  \qquad
  \text{for every }v\in \hoz(\Omega).
\]
Using \eqref{eq:continuum-supergradient-inequality} and expanding the
quadratic term, we obtain
\begin{align*}
  \mathcal J(v)-\mathcal J(u)
  &=
  \int_\Omega
  \nabla u\cdot\nabla(v-u)\dx
  +
  \frac12
  \Norm{
    \nabla(v-u)
  }_{L^2(\Omega)}^2
  -
  \bigl(
    \mathcal R(v)-\mathcal R(u)
  \bigr)
  \\
  &\geq
  \frac12
  \Norm{
    \nabla(v-u)
  }_{L^2(\Omega)}^2.
\end{align*}
Thus $u$ minimises $\mathcal J$. The inequality is strict whenever
$v\neq u$, so $u$ is the unique minimiser.
\end{proof}

\subsection{Proof of the main theorem}

We first establish consistency of the discrete rearrangement.

\begin{lemma}[Consistency of the weighted nodal rearrangement]
\label{lem:discrete-rearrangement-consistency}
Let $h\to0$, and suppose that $v_h\in\Vh$ satisfies
\begin{equation}
  \mathcal E_hv_h
  \longrightarrow
  v
  \qquad
  \text{uniformly on }\overline\Omega.
  \label{eq:uniform-function-convergence}
\end{equation}
Then
\begin{equation}
  \mathcal R_h(v_h)
  \longrightarrow
  \mathcal R(v)
  \qquad
  \text{as }h\to0.
  \label{eq:rearrangement-functional-convergence}
\end{equation}
\end{lemma}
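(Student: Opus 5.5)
The plan is to reduce the statement to two approximations, each controlled by a bound already in the paper: first replace $v_h$ by the interpolant of the limit $v$, then pass from nodal sums to the continuum rearrangement through weak convergence of the lumped measures. Throughout we use the layer-cake form of both functionals. Since $G$ is nondecreasing and bounded, for a step rearrangement one has
\[
  \int_0^{m} v^\ast G\d\rho
  =
  \int_0^{m} \bigl(G(m)-G(\rho)\bigr)\,\frac{\mathrm d}{\mathrm d\rho}\Bigl(\int_0^\rho v^\ast\Bigr)\d\rho
  + \dots,
\]
but it is cleaner to use $\mathcal R(v)=\int_\Omega v\,G(\abs{v\ge v(\cdot)})\dx$-type expressions only where no atoms occur. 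We avoid this and work instead with the Lipschitz bounds directly.

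\emph{Step 1 (replace $v_h$ by $I_hv$).} The limit $v$ is continuous on $\overline\Omega$ and vanishes on $\partial\Omega$, being a uniform limit of zero-extended functions in $\hoz$. Set $w_h:=I_h(v|_{\Omega_h})$, modified to vanish at boundary nodes; since boundary nodes lie within $\delta_h$ of $\partial\Omega$ and $v$ is uniformly continuous, $\max_i\abs{w_h(x_i)-v(x_i)}\to0$. Then $\max_i\abs{v_h(x_i)-w_h(x_i)}\to0$ by \eqref{eq:uniform-function-convergence}. Lemma~\ref{lem:discrete-rearrangement-concavity} together with $\sum_i\omega_i=\abs{\Omega_h}\le\abs\Omega$ gives $\abs{\mathcal R_h(v_h)-\mathcal R_h(w_h)}\to0$. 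It therefore suffices to show $\mathcal R_h(w_h)\to\mathcal R(v)$, where $w_h$ has nodal values $v(x_i)$ up to an $o(1)$ perturbation that is harmless by the same Lipschitz bound. In fact $\mathcal R_h$ depends only on the nodal values and weights, so we may regard it as the functional $\mathcal R^{\mu_h}(v):=\int_0^{\mu_h(\overline\Omega)} v^{\ast,\mu_h}G\d\rho$, the decreasing rearrangement of $v$ with respect to the measure $\mu_h$.

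\emph{Step 2 (convergence of rearrangements under $\mu_h\rightharpoonup\mathcal L^2|_\Omega$).} We compare the distribution functions $F_h(t):=\mu_h(\{v>t\})$ and $F(t):=\abs{v>t}$. For continuous $v$, weak convergence (Lemma~\ref{lem:nodal-measure-convergence}) combined with the portmanteau theorem gives
\[
  \abs{v>t}\le\liminf F_h(t),
  \qquad
  \limsup \mu_h(\{v\ge t\})\le\abs{v\ge t},
\]
since $\{v>t\}$ is relatively open and $\{v\ge t\}$ is closed in $\overline\Omega$. Consequently $F_h(t)\to F(t)$ at every $t$ with $\abs{v=t}=0$, which holds for all but countably many $t$. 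Passing to generalized inverses, $v^{\ast,\mu_h}(\rho)\to v^\ast(\rho)$ at every continuity point of $v^\ast$ in $(0,\abs\Omega)$, hence for a.e.\ $\rho$. Atoms of $v$, including a maximal plateau, are allowed here: they only create flat pieces of $v^\ast$, whose values are matched. We extend $v^{\ast,\mu_h}$ by $0$ (or by $\min v$) on $(\abs{\Omega_h},\abs\Omega)$; this interval has vanishing length. All the rearrangements are bounded by $\sup\abs v+o(1)$, so dominated convergence against the bounded $G$ yields $\mathcal R^{\mu_h}(v)\to\mathcal R(v)$.

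\emph{Main obstacle.} The delicate point is the pointwise convergence of the generalized inverses when $v$ has positive-measure level sets. There we must check that the half-open rank interval convention \eqref{eq:nodal-rank-interval} agrees with the infimum definition \eqref{eq:continuum-decreasing-rearrangement}. The two conventions differ only on the countable set of ranks corresponding to jumps of $F$, which is Lebesgue-null. To handle this I would first prove the sandwich
\[
  \liminf_h v^{\ast,\mu_h}(\rho)\ge v^\ast(\rho+),
  \qquad
  \limsup_h v^{\ast,\mu_h}(\rho)\le v^\ast(\rho-),
\]
directly from the two portmanteau inequalities above, and then conclude by monotonicity of $v^\ast$.
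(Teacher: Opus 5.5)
Your proof is correct, but it is organised differently from the paper's. The paper never replaces $v_h$. It pushes $\mu_h$ forward by $\mathcal E_hv_h$ itself and obtains $(\mathcal E_hv_h)_\#\mu_h\rightharpoonup v_\#(\mathcal L^2|_\Omega)$ by testing Lemma~\ref{lem:nodal-measure-convergence} against the $h$-dependent integrands $\eta(\mathcal E_hv_h)$. It then adds a Dirac mass $(m-m_h)\delta_{c_0}$ below all values so that the total masses agree. Finally it quotes the one-dimensional fact that weak convergence of equal-mass measures on a fixed compact interval is $W_1$ convergence, i.e.\ $L^1(0,m)$ convergence of the decreasing quantiles. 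The plateau is absorbed silently by the Wasserstein metric.

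You instead decouple the two limits. The Lipschitz bound of Lemma~\ref{lem:discrete-rearrangement-concavity} reduces everything to a single fixed continuous $v$ rearranged against the moving measures $\mu_h$. Its min-representation proof applies verbatim to arbitrary nodal vectors, which you need because $v(x_i)$ need not vanish at boundary nodes. The portmanteau theorem on the relatively open and closed superlevel sets of $v$ then gives convergence of the distribution functions off the countable set of atoms. Your sandwich gives a.e.\ convergence of the generalised inverses, and dominated convergence finishes.

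What each buys: your route is more elementary, only ever tests $\mu_h$ against fixed objects, and makes explicit why atoms such as the maximum plateau are harmless. The paper's route is shorter but relies on the Wasserstein characterisation of quantile convergence.

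Two clean-ups. The auxiliary $w_h$ is unnecessary: the nodes lie in $\overline\Omega$, so $\abs{\mathcal R_h(v_h)-\mathcal R^{\mu_h}(v)}\leq M_\alpha\abs{\Omega}\sup_{\overline\Omega}\abs{\mathcal E_hv_h-v}$ directly. The garbled opening display, which you abandon anyway, should be deleted.
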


\begin{proof}
Set
\[
  m_h
  :=
  \abs{\Omega_h},
  \qquad
  m
  :=
  \abs{\Omega},
\]
and define the distributions of the discrete and limiting functions by
\[
  \nu_h
  :=
  (\mathcal E_hv_h)_\#\muh,
  \qquad
  \nu
  :=
  v_\#
  \qp{
    \mathcal L^2|_\Omega
  }.
\]
We first show that
\begin{equation}
  \nu_h
  \rightharpoonup
  \nu
  \qquad
  \text{weakly as finite measures on }\mathbb R.
  \label{eq:discrete-value-distribution-convergence}
\end{equation}
Indeed, let $\eta\in \cont0(\mathbb R)$. The functions
$\mathcal E_hv_h$ and $v$ are uniformly bounded for sufficiently small
$h$, so only the restriction of $\eta$ to a fixed compact interval is
relevant. By \eqref{eq:uniform-function-convergence},
\[
  \eta(\mathcal E_hv_h)
  \longrightarrow
  \eta(v)
  \qquad
  \text{uniformly on }\overline\Omega.
\]
Consequently, Lemma~\ref{lem:nodal-measure-convergence} gives
\begin{align*}
  \int_{\mathbb R}
  \eta(t)\d\nu_h(t)
  &=
  \int_{\overline\Omega}
  \eta(\mathcal E_hv_h)
  \d\muh
  \\
  &\longrightarrow
  \int_\Omega
  \eta(v)\dx
  =
  \int_{\mathbb R}
  \eta(t)\d\nu(t),
\end{align*}
which proves \eqref{eq:discrete-value-distribution-convergence}.

Choose $B>0$ such that
\[
  \abs{\mathcal E_hv_h}
  \leq
  B,
  \qquad
  \abs v
  \leq
  B
\]
for all sufficiently small $h$, and set
\[
  c_0
  :=
  -B-1.
\]
Since $\Omega_h\subset\Omega$, we have $m_h\leq m$. Define the augmented
measure
\begin{equation}
  \widehat\nu_h
  :=
  \nu_h
  +
  (m-m_h)\delta_{c_0}.
  \label{eq:augmented-pushforward-measure}
\end{equation}
The measures $\widehat\nu_h$ and $\nu$ have the same total mass $m$,
are supported in a common compact interval, and satisfy
\[
  \widehat\nu_h
  \rightharpoonup
  \nu.
\]
Weak convergence on a common compact support implies convergence in the
one-dimensional Wasserstein distance $W_1$. Equivalently, the decreasing
quantiles converge in $L^1(0,m)$.

Let $\widehat v_h^\ast$ denote the decreasing quantile of
$\widehat\nu_h$. Since $c_0$ lies strictly below the values of
$\mathcal E_hv_h$, the added mass occupies the final part of the rank
interval, and hence
\[
  \widehat v_h^\ast(r)
  =
  \begin{cases}
    v_h^\ast(r),
      &0<r<m_h,
    \\
    c_0,
      &m_h<r<m.
  \end{cases}
\]
Therefore
\[
  \int_0^m
  \abs{
    \widehat v_h^\ast(r)-v^\ast(r)
  }
  \d r
  \longrightarrow0,
\]
and in particular
\begin{equation}
  \int_0^{m_h}
  \abs{
    v_h^\ast(r)-v^\ast(r)
  }
  \d r
  \longrightarrow0.
  \label{eq:augmented-quantile-convergence}
\end{equation}

By the definitions of the discrete and continuum rearrangement
functionals,
\[
  \mathcal R_h(v_h)
  =
  \int_0^{m_h}
  G(r)v_h^\ast(r)\d r,
  \qquad
  \mathcal R(v)
  =
  \int_0^m
  G(r)v^\ast(r)\d r.
\]
Since $0\leq G\leq M_\alpha$ and $\abs{v^\ast}\leq B$,
\begin{align*}
  \abs{
    \mathcal R_h(v_h)-\mathcal R(v)
  }
  &\leq
  M_\alpha
  \int_0^{m_h}
  \abs{
    v_h^\ast(r)-v^\ast(r)
  }
  \d r
  \\
  &\quad
  +
  M_\alpha B
  (m-m_h).
\end{align*}
The first term tends to zero by
\eqref{eq:augmented-quantile-convergence}, while the second tends to
zero because $\abs{\Omega_h}\to\abs{\Omega}$. This proves
\eqref{eq:rearrangement-functional-convergence}.
\end{proof}

\begin{proof}[Proof of Theorem~\ref{thm:main-nonlinear}]
Existence, uniqueness and the inclusion
\[
  u_h\in K_h
\]
follow from Lemmas~\ref{lem:discrete-well-posedness}
and~\ref{lem:discrete-barrier-bound}. For each fixed $h$, convergence of
the rearrangement iteration to $u_h$ follows from
Lemma~\ref{lem:rearrangement-iteration-convergence}.

We now consider the limit $h\to0$. Let $h_k\to0$ be arbitrary. For each
$k$, choose any admissible equilibrium source
\[
  q_{h_k}^\ast
  =
  \qp{
    q_{h_k,i}^\ast
  }_{i\in\mathcal N_{h_k}}
\]
satisfying \eqref{eq:discrete-source-equation}, and define
\[
  s_{h_k}^\ast
  :=
  \sum_{i\in\mathcal N_{h_k}}
  q_{h_k,i}^\ast\varphi_i.
\]
The admissible source bounds imply
\[
  0
  \leq
  s_{h_k}^\ast
  \leq
  M_\alpha
  \qquad
  \text{in }\Omega_{h_k}.
\]
Hence there exist a subsequence, not relabelled, and
$f\in \leb{\infty}(\Omega)$ such that
\begin{equation}
  \widetilde{s_{h_k}^\ast}
  \stackrel{*}{\rightharpoonup}
  f
  \qquad
  \text{in }\leb{\infty}(\Omega)
  \quad\text{as }k\to\infty.
  \label{eq:technical-source-subsequence}
\end{equation}

The discrete equilibrium equation is a mass-lumped Poisson problem.
Lemma~\ref{lem:linear-compactness-consistency} therefore gives a
function $w\in \hoz(\Omega)$ such that
\begin{align}
  \mathcal E_{h_k}u_{h_k}
  &\longrightarrow
  w
  &&\text{strongly in }\hoz(\Omega),
  \label{eq:technical-subsequence-H1}
  \\
  \mathcal E_{h_k}u_{h_k}
  &\longrightarrow
  w
  &&\text{strongly in }\leb{\infty}(\Omega),
  \label{eq:technical-subsequence-Linf}
\end{align}
and
\begin{equation}
  \int_\Omega
  \nabla w\cdot\nabla\phi\dx
  =
  \int_\Omega
  f\phi\dx
  \qquad
  \text{for every }\phi\in \hoz(\Omega).
  \label{eq:technical-limit-Poisson}
\end{equation}

Let $\phi\in \cont\infty_c(\Omega)$ and set
\[
  \phi_{h_k}
  :=
  I_{h_k}\phi.
\]
For sufficiently large $k$, the support of $\phi$ lies inside
$\Omega_{h_k}$. Since $q_{h_k}^\ast$ is an admissible supergradient at
$u_{h_k}$,
\begin{align}
  \mathcal R_{h_k}(\phi_{h_k})
  -
  \mathcal R_{h_k}(u_{h_k})
  &\leq
  \sum_{i\in\mathcal N_{h_k}}
  \omega_iq_{h_k,i}^\ast
  \bigl(
    \phi_{h_k}(x_i)-u_{h_k}(x_i)
  \bigr)
  \nonumber\\
  &=
  \sum_{i\in\mathcal N_{h_k}}
  \omega_iq_{h_k,i}^\ast
  \phi_{h_k}(x_i)
  -
  a_{h_k}(u_{h_k},u_{h_k}).
  \label{eq:technical-pass-supergradient}
\end{align}

Lemma~\ref{lem:mass-lumping-error} gives
\[
  \sum_{i\in\mathcal N_{h_k}}
  \omega_iq_{h_k,i}^\ast
  \phi_{h_k}(x_i)
  -
  \int_{\Omega_{h_k}}
  s_{h_k}^\ast\phi_{h_k}\dx
  \longrightarrow0.
\]
Moreover,
\[
  \mathcal E_{h_k}\phi_{h_k}
  \longrightarrow
  \phi
  \qquad
  \text{uniformly on }\overline\Omega.
\]
Together with
\eqref{eq:technical-source-subsequence}, this gives
\begin{equation}
  \int_{\Omega_{h_k}}
  s_{h_k}^\ast\phi_{h_k}\dx
  \longrightarrow
  \int_\Omega
  f\phi\dx.
  \label{eq:technical-source-test-limit}
\end{equation}

The strong $\hoz(\Omega)$ convergence in
\eqref{eq:technical-subsequence-H1} gives
\[
  a_{h_k}(u_{h_k},u_{h_k})
  =
  \Norm{
    \nabla\mathcal E_{h_k}u_{h_k}
  }_{L^2(\Omega)}^2
  \longrightarrow
  \Norm{\nabla w}_{L^2(\Omega)}^2.
\]
Testing \eqref{eq:technical-limit-Poisson} with $\phi=w$ then yields
\begin{equation}
  a_{h_k}(u_{h_k},u_{h_k})
  \longrightarrow
  \int_\Omega
  fw\dx.
  \label{eq:technical-discrete-energy-limit}
\end{equation}

Lemma~\ref{lem:discrete-rearrangement-consistency}, applied to
\eqref{eq:technical-subsequence-Linf} and to the interpolants
$\phi_{h_k}$, gives
\begin{align}
  \mathcal R_{h_k}(u_{h_k})
  &\longrightarrow
  \mathcal R(w),
  \label{eq:technical-solution-rearrangement-limit}
  \\
  \mathcal R_{h_k}(\phi_{h_k})
  &\longrightarrow
  \mathcal R(\phi).
  \label{eq:technical-test-rearrangement-limit}
\end{align}
Passing to the limit in
\eqref{eq:technical-pass-supergradient} therefore yields
\begin{equation}
  \mathcal R(\phi)-\mathcal R(w)
  \leq
  \int_\Omega
  f(\phi-w)\dx
  \qquad
  \text{for every }\phi\in \cont\infty_c(\Omega).
  \label{eq:technical-limit-supergradient}
\end{equation}

Since $\cont\infty_c(\Omega)$ is dense in $L^1(\Omega)$,
$f\in \leb{\infty}(\Omega)$, and $\mathcal R$ is Lipschitz continuous on
$L^1(\Omega)$, the inequality extends to
\begin{equation}
  \mathcal R(\phi)-\mathcal R(w)
  \leq
  \int_\Omega
  f(\phi-w)\dx
  \qquad
  \text{for every }\phi\in L^1(\Omega).
  \label{eq:technical-limit-supergradient-L1}
\end{equation}

For $\phi\in \hoz(\Omega)$,
\eqref{eq:technical-limit-Poisson} gives
\[
  \int_\Omega
  \nabla w\cdot\nabla(\phi-w)\dx
  =
  \int_\Omega
  f(\phi-w)\dx.
\]
Combining this identity with
\eqref{eq:technical-limit-supergradient-L1} yields
\[
  \int_\Omega
  \nabla w\cdot\nabla(\phi-w)\dx
  \geq
  \mathcal R(\phi)-\mathcal R(w)
  \qquad
  \text{for every }\phi\in \hoz(\Omega).
\]
Thus $w$ minimises the continuum energy $\mathcal J$. By
Lemma~\ref{lem:continuum-variational-characterisation}, the unique
minimiser is the continuum Grad--Mercier solution, and therefore
\begin{equation}
  w
  =
  u.
  \label{eq:technical-subsequence-identification}
\end{equation}

It remains to identify the weak-star limit of the reconstructed sources.
Substituting $w=u$ into \eqref{eq:technical-limit-Poisson} gives
\begin{equation}
  \int_\Omega
  \nabla u\cdot\nabla\phi\dx
  =
  \int_\Omega
  f\phi\dx
  \qquad
  \text{for every }\phi\in \hoz(\Omega).
  \label{eq:technical-limit-source-equation}
\end{equation}
On the other hand, the weak formulation of the continuum
Grad--Mercier equation gives
\begin{equation}
  \int_\Omega
  \nabla u\cdot\nabla\phi\dx
  =
  \int_\Omega
  g\bigl(
    \abs{u\geq u(\cdot)}
  \bigr)
  \phi\dx
  \qquad
  \text{for every }\phi\in \hoz(\Omega).
  \label{eq:technical-continuum-source-equation}
\end{equation}
Subtracting
\eqref{eq:technical-continuum-source-equation} from
\eqref{eq:technical-limit-source-equation} gives
\[
  \int_\Omega
  \qb{
    f
    -
    g\qp{
      \abs{u\geq u(\cdot)}
    }
  }
  \phi\dx
  =
  0
  \qquad
  \text{for every }\phi\in \cont\infty_c(\Omega).
\]
Therefore
\begin{equation}
  f
  =
  g\qp{
    \abs{u\geq u(\cdot)}
  }
  \qquad
  \text{almost everywhere in }\Omega.
  \label{eq:technical-source-identification}
\end{equation}

The sequence $h_k\to0$ was arbitrary, and every convergent subsequence
has the same limit $u$. The subsequence principle therefore gives
\begin{align*}
  \mathcal E_hu_h
  &\longrightarrow
  u
  &&\text{strongly in }\hoz(\Omega),
  \\
  \mathcal E_hu_h
  &\longrightarrow
  u
  &&\text{strongly in }\leb{\infty}(\Omega)
\end{align*}
as $h\to0$.

The admissible equilibrium sources were also chosen arbitrarily. Every
weak-star accumulation point of their reconstructions is identified by
\eqref{eq:technical-source-identification}. Consequently, for every
choice of admissible equilibrium sources $q_h^\ast$,
\[
  \widetilde{s_h^\ast}
  \stackrel{*}{\rightharpoonup}
  g\qp{
    \abs{u\geq u(\cdot)}
  }
  \qquad
  \text{in }\leb{\infty}(\Omega)
  \quad\text{as }h\to0.
\]

Finally, the maximum plateau has measure $\alpha$ by
Theorem~\ref{thm:continuum-well-posedness}, and the absence of
positive-measure level sets below the maximum follows from
Lemma~\ref{lem:continuum-no-secondary-plateaux}.
\end{proof}

\section*{Acknowledgments}

TP is supported by the EPSRC programme grant Mathematics of Radiation
Transport (MaThRad) EP/W026899/2. All of this support is gratefully
acknowledged.

\printbibliography

\end{document}